\numberwithin{equation}{section}
\DeclareRobustCommand{\btleft}{\mathop{\mathpalette\btlr@\blacktriangleleft}}
\DeclareRobustCommand{\btright}{\mathop{\mathpalette\btlr@\blacktriangleright}}
\newcommand{\btlr@}[2]{%
  \begingroup
  \sbox\z@{$\m@th#1\triangleright$}%
  \sbox\tw@{\resizebox{1\wd\z@}{1\ht\z@}{\raisebox{\depth}{$\m@th#1\mkern-1mu#2$}}}%
  \ht\tw@=\ht\z@ \dp\tw@=\dp\z@ \wd\tw@=\wd\z@
  \copy\tw@
  \endgroup
}
\theoremstyle{plain}
\newtheorem{theorem}{Theorem}[section]
\newtheorem{corollary}[theorem]{\bf Corollary}
\newtheorem{proposition}[theorem]{\bf Proposition}
\theoremstyle{definition}
\newtheorem{remark}[theorem]{\bf Remark}
\newtheorem*{remark*}{\bf Remark}
\newtheorem{definition}[theorem]{\bf Definition}
\author{Tristan Bice}
\address{Institute of Mathematics of the Czech Academy of Sciences, \v{Z}itn\'a 25, Prague}
\email{bice@math.cas.cz}
\keywords{type theory, set theory, model theory, semantics}
\subjclass[2020]{03B38, 03B40, 03C55, 03E30, 68V20}
\begin{document}

\title{Dependent Types Simplified}

\begin{abstract}
We present two logical systems based on dependent types that are comparable to ZFC, both in terms of simplicity and having natural set theoretic interpretations.  Our perspective is that of a mathematician trained in classical logic, but nevertheless we hope this paper might go some way to bridging the cultural divide between type theorists coming from computer science.
\end{abstract}

\maketitle
\tableofcontents

\section*{Introduction}

\subsection*{Background}

Type theory has its origins in Russell and Whitehead's Principia \cite{RussellWhitehead1910} from the early 20th century, an ambitious project to set all mathematics of the day on a firm foundation.  Broadly speaking it achieved this goal, showing how a significant amount of mathematics could, at least in principal, be expressed in their formalism.  Nevertheless, the complexity of its ramified type system meant there was little enthusiasm for using it in practice, especially once simpler foundational systems like Zermelo-Fraenkel set theory become available.

The lambda-calculus underlying modern type theories also had its origin in Church's attempts in \cite{Church1932} to give an alternative foundation for mathematics.  While his foundational system was found to be flawed, part of the syntax was proposed in \cite{Church1936} as a formal system for computations.  For some time this also had little uptake in the general mathematical community, but for a rather different reason, namely that no precise meaning was given to the formalism.  It was only after Scott gave it some proper semantics that it started to gain more acceptance (see \cite[\S5]{Mislove1998}).  A similar story pertains to intuitionistic logic, for example, which was generally viewed with skepticism until Kripke semantics came along (see \cite{Kripke1965}).

Thus if history is any guide, to be taken seriously as a foundation for mathematics, a logical system must satisfy two basic criteria -- it must be both simple and meaningful.  The classic example of such a system is ZFC (Zermelo-Fraenkel set theory with the axiom of choice -- see \cite{Kunen2011}) which has indeed served as a solid foundation for mathematics for over a century now.  The language of ZFC is certainly quite simple, just the usual first order logic with a single relation symbol $\in$.  The logical symbols have their usual meaning, while $\in$ is naturally interpreted as the membership relation on some universe of sets.  The deductive system of ZFC is also just the usual one for first order logic with only two inference rules, modus ponens and universal generalisation.  The only slight complexity arises in the axioms, consisting of several logical axioms together with 9 more axioms specific to ZFC.

On the other hand, logical systems based on dependent types usually fail the basic criteria on both counts, being neither simple nor meaningful.  Indeed, a typical paper on dependent type theory will start with several pages of ad hoc inference rules with little or no effort devoted to explaining the meaning or motivation behind them.  Even those papers that do try to give the formalism some proper semantics do so only after the inference rules have already been fixed.  They also often make some unusual choices, e.g.~allowing the interpretations to be affected by assumptions, not just the underlying variable assignment.

It is no surprise then that such systems have not been seen as genuine contenders for foundational systems, despite the best efforts of their proponents.  This may be slowly changing with the advent of several proof assistants based on dependent types that have been gaining traction, at least in the computer science community.  However, even those mathematicians sympathetic to the idea of automated proof checking tend to balk at the type systems being used as the foundational logic.

Another problematic issue is the habit of type theorists to propose systems that are not even consistent.  From the very beginning, early versions of the Principia were inconsistent (see \cite{Linsky2002}), as were the original systems proposed by Church, Curry and Martin-L\"of (see \cite{KleeneRosser1935}, \cite{Coquand1994} and \cite{Hurkens1995}).  Right up to the present day, `bugs' are regularly being found in the kernels of the most popular proof assistants based on dependent types (see \cite{Carneiro2024}).  In contrast, no inconsistency has been found in ZFC since it was finalised over a century ago.  Thus an added benefit of a type system with rigorous set based semantics is that its consistency is put on a par with that of ZFC, the most well studied and well tested logical system in history.  Indeed, had the proponents of various inconsistent systems given more thought to their semantics from the outset, they could have quickly seen where the inconsistencies would likely arise and modified their systems accordingly.

That said, despite their chequered past, we believe dependent types do have potential as a foundational system if developed with just a little more care.  This is the goal of the present paper -- to show that systems of dependent types can be created which are simple enough to be compared with ZFC and which can also be meaningfully interpreted in an elementary but precise way in some universe of sets.  Even if these particular systems do not see any practical use, we hope they might at least serve as a bridge of understanding between the two communities in question -- mathematicians can gain a quick understanding of dependent types, while computer scientists can see some type systems that would be more appealing to mathematicians.

\subsection*{Related Work}

We would be remiss if we did not also mention simple type theories, which can indeed be sufficiently simple and meaningful that they could rightly be viewed as satisfying our criteria.  Why they have not received widespread recognition as potential foundational systems we can only speculate.  Suffice to say that the interest seems to have shifted towards dependent type theories, perhaps because of some nice features they possess, e.g.~a uniform treatment of terms and types and the compatibility of universal quantification with the `propositions as types' philosophy.

We have also neglected to mention the significant body of work devoted to categorical semantics and the more sophisticated type systems designed to handle them, like homotopy type theory.  As a foundation for higher category theory, some kind of homotopy type theory may well prove to be the best framework, as will no doubt be borne out by continued research in this area.  However, a foundational system for mathematics as a whole should first and foremost have some natural set based semantics, simple enough to be understood by any mathematician, not just those with expertise in category theory.  Of course any semantics beyond sets, category theoretic or otherwise, is definitely welcome, it is just not the primary goal of such a foundational system.

\section*{Preliminaries}

We follow the usual mathematical practice of working in some na\"ive universe of sets, making use of standard set theoretic notation and terminology throughout.  This can all be formalised in ZFC but this is not the main point -- much of what we say is simple enough that it would be valid in any reasonable formal theory of sets.  If particular axioms of ZFC (as detailed in \cite[\S I.3]{Kunen2011}, for example) or large cardinals become relevant, we will explicitly point this out.

As usual, the \emph{power set} of a set $S$ will be denoted by
\[\mathcal{P}(S)=\{R\mathrel{|}R\subseteq S\}.\]
Ordered pairs will be denoted by angle brackets, e.g.~$\langle r,s\rangle$.  The \emph{product} of any sets $R$ and $S$ is thus given by
\[R\times S=\{\langle r,s\rangle\mathrel{|}r\in R\text{ and }s\in S\}.\]
We often use the standard infix notation to denote an ordered pair $\langle r,d\rangle$ being a member of a set $S$, i.e.
\[r\mathrel{S}d\qquad\Leftrightarrow\qquad\langle r,d\rangle\in S.\]
The \emph{domain} and \emph{range} of a set $S$ are then given by
\begin{align}
    \tag{Domain}\mathrm{dom}(S)&=\{d\mathrel{|}\exists r\,(r\mathrel{S}d)\}.\\
    \tag{Range}\mathrm{ran}(S)&=\{r\mathrel{|}\exists d\,(r\mathrel{S}d)\}.
\end{align}
The \emph{inverse} of a set $S$ is given by
\[\tag{Inverse}S^{-1}=\{\langle d,r\rangle\mathrel{|} r\mathrel{S}d\}.\]

The \emph{image} of a set $S$ under a set $R$ is given by
\[\tag{Image}R[S]=\{r\mathrel{|}\exists s\in S\,(r\mathrel{R}s)\}.\]
In particular, the image $R[\{s\}]$ of a singleton set $\{s\}$ will be abbreviated as $R\{s\}$.  The \emph{application} of a set $R$ to a set $s$ is given by
\[\tag{Application}R(s)=\bigcup R\{s\}=\bigcup\{r\mathrel{|}\langle r,s\rangle\in R\}.\]
In particular, if $s\notin\mathrm{dom}(R)$ then $R(s)=R\{s\}=\emptyset$.  The \emph{composition} of a set $R$ with a set $S$ is given by
\[\tag{Composition}R\circ S=\{\langle r,s\rangle\mathrel{|}R^{-1}\{r\}\cap S\{s\}\neq\emptyset\}.\]
Equivalently, for any $r$ and $s$,
\[r\mathrel{R\circ S}s\qquad\Leftrightarrow\qquad\exists t\,(r\mathrel{R}t\mathrel{S}s).\]
The \emph{transitive closure} of $R$ is defined to be the smallest set $R^\mathsf{t}$ containing $R$ such that $R^\mathsf{t}\circ R^\mathsf{t}\subseteq R^\mathsf{t}$, i.e.
\[\tag{Transitive Closure}R^\mathsf{t}=R\cup(R\circ R)\cup(R\circ R\circ R)\cup\ldots.\]

A \emph{binary relation} $R$ is a set of ordered pairs.  Equivalently, $R$ is a binary relation if and only if $R\subseteq\mathrm{ran}(R)\times\mathrm{dom}(R)$.  A \emph{function} $F$ is a binary relation where, for each $x\in\mathrm{dom}(F)$, there is a unique $y\in\mathrm{ran}(F)$ with $y\mathrel{F}x$.  Equivalently, $F$ is a function if and only if $F=\{\langle F(x),x\rangle\mathrel{|}x\in\mathrm{dom}(F)\}$.

\begin{remark*}
When $F$ is a function, we immediately see that
\[y=F(x)\qquad\Leftrightarrow\qquad y\mathrel{F}x.\]
Indeed, this is usually taken as the definition of application, i.e.~ $F(x)$ is defined to be the unique $y$ satisfying $y\mathrel{F}x$, but only when $F$ is a function and $x$ lies in its domain.  However, we find it convenient to define applications for arbitrary sets as in \cite{Aczel1999} in order that our interpretations below can be defined on arbitrary terms.

Also note that for us a function with domain $D$ and range $R$ is subset of $R\times D$ not $D\times R$, as often seen in the literature.  We adopt this convention so that the notion of composition above agrees with the standard notions for both relations and functions.
\end{remark*}

The set of all functions from a set $X$ to a set $Y$ will be denoted by
\[Y^X=\{F\subseteq Y\times X:F\text{ is a function with }\mathrm{dom}(F)=X\}.\] 
More generally, given $F$ with $\mathrm{dom}(F)=X$, the set of functions $f$ on $X$ such that $f(x)\in F(x)$, for all $x\in X$, will be denoted by
\[\prod F=\prod_{x\in X}F(x)=\{f\in\big(\bigcup_{x\in X}F(x)\big)^X\mathrel{|}f(x)\in F(x),\text{ for all }x\in X\}.\]
This is called the \emph{dependent product} defined by $F$.

A \emph{string} $S$ in some alphabet $\mathsf{A}$ is an element of $\mathsf{A}^{<\omega}=\bigcup_{n\in\omega}\mathsf{A}^n$, i.e.~ a function from some finite ordinal $n=\{0,\ldots,n-1\}$ to elements of $\mathsf{A}$.  We call $n$ the \emph{length} of $S$.  We concatenate strings as usual so if $R$ and $S$ are strings of length $m$ and $n$ respectively then $RS$ denotes the string of length $m+n$ defined by
\[RS(k)=\begin{cases}R(k)&\text{if }k<m\\S(k-m)&\text{if }k\geq m.\end{cases}\]
Likewise, given sets of strings $\mathsf{R},\mathsf{S}\subseteq\mathsf{A}^{<\omega}$, we let
\[\mathsf{R}\mathsf{S}=\{RS\mathrel{|}R\in\mathsf{R}\text{ and }S\in\mathsf{S}\}.\]
We also identify symbols $a\in\mathsf{A}$ with the corresponding strings of length $1$, i.e.~where $a(0)=a$.  Thus if $a,b,c\in\mathsf{A}$ then $abc$ denotes the string of length $3$ such that $abc(0)=a$, $abc(1)=b$ and $abc(2)=c$.

We will introduce our type systems in the same way classical logical systems are usually introduced.  First we specify the language, i.e.~the alphabet $\mathsf{A}$ and the strings $\mathsf{C},\mathsf{V},\mathsf{T}\subseteq\mathsf{A}^{<\omega}$ that we take as the \emph{constants}, \emph{variables} and \emph{terms} of our language.  These terms are meant to represent sets, as we make precise by defining \emph{interpretations} $\llbracket\cdot\rrbracket$ as certain maps on terms determined by some assignment of sets to the variables and constants.  We then define \emph{statements} $\mathsf{S}$ formed from terms.  These represent actual mathematical statements about our sets which either hold or fail in a particular interpretation.  This in turn is made precise by showing how interpretations $\llbracket\cdot\rrbracket$ determine a subset of statements $\Sigma$ \emph{satisfied} by $\llbracket\cdot\rrbracket$.  In this case we call $\llbracket\cdot\rrbracket$ a \emph{model} for $\Sigma$.

This naturally leads to a semantic \emph{consequence relation} ${\vDash}\subseteq\mathcal{P}(\mathsf{S})\times\mathsf{S}$, where $\Gamma\vDash X$ indicates that every model for all the statements in $\Gamma$ is a model for the statement $X$.  We then investigate syntactic properties of $\vDash$ which in the end we take as inference rules defining a syntactic \emph{inference relation} ${\vdash}\subseteq\mathcal{P}(\mathsf{S})\times\mathsf{S}$, i.e.~$\vdash$ is the smallest relation obeying all the inference rules.  As we derive the inference rules from properties of $\vDash$ from the outset, the inference relation will automatically be \emph{sound} with respect to the consequence relation, i.e.~${\vdash}\subseteq{\vDash}$.  Finally we examine properties of $\vdash$ and discuss the axioms that would be needed to use $\vdash$ as a foundation for mathematics.

\part{A Bare Bones Dependent Type System}\label{Part1}

\section{Terms}

The alphabet of our first system has 6 symbols which we denote by
\[\mathsf{A}=\{\mathsf{c},\mathsf{v},{'},\rho,\beta,\lambda\}.\]
The strings we are interested in are defined in Backus-Naur form by
\begin{align*}
    \tag{Constants}\mathsf{C}&::= \mathsf{c}\ |\ \mathsf{C}'\\
    \tag{Variables}\mathsf{V}&::= \mathsf{v}\ |\ \mathsf{V}'\\
    \tag{Terms}\mathsf{T}&::= \mathsf{C}\ |\ \mathsf{V}\ |\ \rho\mathsf{T}\ |\ \beta\mathsf{TT}\ |\ \lambda\mathsf{VTT}
\end{align*}
More explicitly, the set of constant and variable strings are given by
\begin{align*}
    \mathsf{C}&=\{\mathsf{c},\mathsf{c}',\mathsf{c}'',\ldots\}\quad\text{and}\\
    \mathsf{V}&=\{\mathsf{v},\mathsf{v}',\mathsf{v}'',\ldots\},
\end{align*} 
while the terms are the smallest set $\mathsf{T}$ of strings such that
\[\mathsf{C}\cup\mathsf{V}\cup\rho\mathsf{T}\cup\beta\mathsf{TT}\cup\lambda\mathsf{VTT}\subseteq\mathsf{T}.\]

The $\rho$, $\beta$ and $\lambda$ terms here are meant to represent dependent products, applications and abstractions respectively, as will soon be clear from the interpretations we define below.  Terms like these are standard in dependent type systems, although our formal syntax differs a little from that commonly seen in the literature.  For example, our $\beta$-terms would usually just be written by juxtapostion without the $\beta$, i.e.~$\beta RS$ would just be $RS$.  We are adding the $\beta$ so our formal language requires no parentheses, a side-benefit being that $\beta$-reduction then does indeed amount to reducing $\beta$-terms.  Also $\lambda$-terms are often written with extra colons, dots and arrows, but for simplicity's sake we prefer to avoid unnecessary punctuation, at least in the formal language.  Likewise, products are usually written like $\lambda$-terms just with their own binder $\pi$ replacing $\lambda$.  We prefer a simpler language with a single binder $\lambda$, but we can still view $\pi$ as an abbreviation (so-called `syntactic sugar') for $\rho\lambda$.  We will discuss more abbreviations below.

Before moving on, let us stratify the terms based on the number of operations needed to form them.  Specifically, let $\mathsf{T}^0=\mathsf{C}\cup\mathsf{V}$ and
\[\mathsf{T}^{n+1}=\mathsf{T}^0\cup\rho\mathsf{T}^n\cup\beta\mathsf{T}^n\mathsf{T}^n\cup\lambda\mathsf{V}\mathsf{T}^n\mathsf{T}^n,\]
for all $n\in\omega$.  So $\mathsf{T}^0$ consists of the \emph{atomic} terms and $\mathsf T=\bigcup_{n\in\omega}\mathsf T^n$.  As we are taking strings in $\mathsf{V}$ and $\mathsf{C}$ as atomic, we do not want to view $\mathsf{v}'$ as a substring of $\mathsf{v}''$, for example.  Accordingly we define the substring relation $\sqsubseteq$ on $\mathsf{T}$ as follows.  First, for all $Q,R,S,T\in\mathsf{A}^{<\omega}$, we define
\[S\sqsubseteq_T^QR\qquad\Leftrightarrow\qquad R=QST\text{ and }T(0)\neq{'}.\]
Then we define
\[S\sqsubseteq R\qquad\Leftrightarrow\qquad S\sqsubseteq_T^QR,\text{ for some }Q,T\in\mathsf{A}^{<\omega}.\]

\section{Interpretations}

Now that we have defined our terms, a type theorist might expect us to immediately define statements and list the inference rules that can be applied to them.  However, we would first like to make our intended interpretations of the terms clear.  Only then does it make sense to introduce statements and inference rules based on these interpretations.

To interpret terms, we first assign sets to variables and constants.  Accordingly, we call a function $\llbracket\cdot\rrbracket^0$ on $\mathsf{T}^0$ an \emph{assignment}.  Given an assignment $\llbracket\cdot\rrbracket^0$, we can modify it to agree with a function $\psi$ defined on a subset of $\mathsf{T}^0$.  We denote this modification by $\llbracket\cdot\rrbracket^0_\psi$, i.e.
\[\llbracket t\rrbracket^0_\psi=\begin{cases}\psi(t)&\text{if }t\in\mathrm{dom}(f)\\\llbracket t\rrbracket^0&\text{otherwise}.\end{cases}\]
In particular, given any $x\in\mathsf{V}$ and $s$, we get another assignment $\llbracket\cdot\rrbracket^0_{\{\langle s,x\rangle\}}$ which we abbreviate to $\llbracket\cdot\rrbracket^0_{\langle s,x\rangle}$ so that
\[\llbracket t\rrbracket^0_{\langle s,x\rangle}=\begin{cases}s&\text{if }t=x\\\llbracket t\rrbracket^0&\text{if }t\in\mathsf{T}^0\setminus\{x\}.\end{cases}\]

\begin{definition}
    We extend any assignment $\llbracket\cdot\rrbracket^0$ on $\mathsf{T}^0$ to functions $\llbracket\cdot\rrbracket^n=(\llbracket\cdot\rrbracket^0)^n$ on $\mathsf{T}^n$, for each $n\in\omega$, so that
    \begin{align*}
        \llbracket\rho S\rrbracket^{n+1}&=\prod\llbracket S\rrbracket^n,\\
        \llbracket\beta RS\rrbracket^{n+1}&=\llbracket R\rrbracket^n(\llbracket S\rrbracket^n)\quad\text{and}\\
        \llbracket\lambda xRS\rrbracket^{n+1}&=\{\langle\llbracket S\rrbracket_{\langle r,x\rangle}^n,r\rangle\mathrel{|}r\in\llbracket R\rrbracket^n\},
    \end{align*}
    for $R,S\in\mathsf{T}^n$ and $x\in\mathsf{V}$, where $\llbracket\cdot\rrbracket_{\langle r,x\rangle}^n=(\llbracket\cdot\rrbracket^0_{\langle r,x\rangle})^n$.  The \emph{interpretation} coming from $\llbracket\cdot\rrbracket^0$ is the function $\llbracket\cdot\rrbracket=\llbracket\cdot\rrbracket^\omega=(\llbracket\cdot\rrbracket^0)^\omega$ on $\mathsf{T}$ defined by
    \[\llbracket\cdot\rrbracket=\bigcup_{n\in\omega}\llbracket\cdot\rrbracket^n.\]
\end{definition}

Restricting an interpretation $\llbracket\cdot\rrbracket$ to $\mathsf{T}^0$ is immediately seen to recover the underlying assignment $\llbracket\cdot\rrbracket^0$, from which we then define
\[\llbracket\cdot\rrbracket_\psi=\bigcup_{n\in\omega}\llbracket\cdot\rrbracket^n_\psi=\bigcup_{n\in\omega}(\llbracket S\rrbracket^0_\psi)^n,\]
We then see that the above defining properties of $\llbracket\cdot\rrbracket^n$ on $\mathsf{T}^n$ remain valid for the whole interpretation $\llbracket\cdot\rrbracket$ on $\mathsf{T}$, i.e.
    \begin{align*}
        \llbracket\rho S\rrbracket&=\prod\llbracket S\rrbracket,\\
        \llbracket\beta RS\rrbracket&=\llbracket R\rrbracket(\llbracket S\rrbracket)\quad\text{and}\\
        \llbracket\lambda xRS\rrbracket&=\{\langle\llbracket S\rrbracket_{\langle r,x\rangle},r\rangle\mathrel{|}r\in\llbracket R\rrbracket\}.
    \end{align*}
Our interpretations are thus consistent with those defined in \cite{Aczel1999}.  This contrasts with other notions of interpretation, like those given in \cite{MiquelWerner2003}, which further depend on a given set of assumptions $\Gamma$.

Note that the above means that a $\lambda$-term $\lambda xRS$ is interpreted as a function with domain $\llbracket R\rrbracket$ such that, for all $r\in\llbracket R\rrbracket$,
\[\llbracket\lambda xRS\rrbracket(r)=\llbracket S\rrbracket_{\langle r,x\rangle}.\]
In particular, if we have another term $T\in\mathsf{T}$ with $\llbracket T\rrbracket\in\llbracket R\rrbracket$ then
\[\llbracket\beta\lambda xRST\rrbracket=\llbracket\lambda xRS\rrbracket(\llbracket T\rrbracket)=\llbracket S\rrbracket_{\langle\llbracket T\rrbracket,x\rangle}.\]
Based on our definition of interpretations, $\llbracket S\rrbracket_{\langle\llbracket T\rrbracket,x\rangle}$ should be the same as the interpretation of another term $S_{[T/x]}$ obtained by substituting $T$ for all `free' occurrences of $x$ in $S$, as long as we are careful to rename bound variables in $S$ so the binder $\lambda$ does not unintentionally capture any free variables in $T$.  Let us now make this more precise.

\section{Free Variables}

Intuitively, a variable $x$ should be free in a term $T$ if changing an interpretation at $x$ could conceivably change the interpretation of $T$.  Accordingly, we define the \emph{free variables} $\mathsf{F}(T)\subseteq\mathsf{V}$ of any term $T$ so that, for all $a\in\mathsf{C}$, $x\in\mathsf{V}$ and $R,S\in\mathsf{T}$,
\begin{align*}
    \mathsf{F}(a)&=\emptyset.\\
    \mathsf{F}(x)&=\{x\}.\\
    \mathsf{F}(\rho R)&=\mathsf{F}(R).\\
    \mathsf{F}(\beta RS)&=\mathsf{F}(R)\cup\mathsf{F}(S).\\
    \mathsf{F}(\lambda xRS)&=\mathsf{F}(R)\cup(\mathsf{F}(S)\setminus\{x\}).
\end{align*}

\begin{proposition}\label{FreeProp}
    For any interpretation $\llbracket\cdot\rrbracket$, $S\in\mathsf{T}$, $x\in\mathsf{V}$ and $s$,
    \[x\notin\mathsf{F}(S)\qquad\Rightarrow\qquad\llbracket S\rrbracket=\llbracket S\rrbracket_{\langle s,x\rangle}.\]
\end{proposition}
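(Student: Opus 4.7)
The plan is to prove the statement by induction on $n$ where $S\in\mathsf{T}^n$, with the induction hypothesis quantifying over \emph{all} assignments (not just the fixed $\llbracket\cdot\rrbracket^0$), so that it can be re-invoked against modified interpretations. The base case $n=0$ is immediate: for $S\in\mathsf{C}$ the modified assignment leaves constants untouched by definition, while for $S\in\mathsf{V}$ the hypothesis $x\notin\mathsf{F}(S)=\{S\}$ forces $S\neq x$, so the modification again has no effect. In the inductive step, the $\rho$ and $\beta$ cases reduce directly to the IH via the equations $\mathsf{F}(\rho R)=\mathsf{F}(R)$ and $\mathsf{F}(\beta RT)=\mathsf{F}(R)\cup\mathsf{F}(T)$ together with the defining clauses $\llbracket\rho R\rrbracket=\prod\llbracket R\rrbracket$ and $\llbracket\beta RT\rrbracket=\llbracket R\rrbracket(\llbracket T\rrbracket)$.

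The real work is in the $\lambda$-case: suppose $S=\lambda yRT$ with $x\notin\mathsf{F}(R)\cup(\mathsf{F}(T)\setminus\{y\})$. Unpacking both sides via the defining equation, what must be shown is
\[\{\langle\llbracket T\rrbracket_{\langle r,y\rangle},r\rangle:r\in\llbracket R\rrbracket\}=\{\langle(\llbracket T\rrbracket_{\langle s,x\rangle})_{\langle r,y\rangle},r\rangle:r\in\llbracket R\rrbracket_{\langle s,x\rangle}\}.\]
The IH applied to $R$, using $x\notin\mathsf{F}(R)$, matches the two indexing sets, so it suffices to prove $\llbracket T\rrbracket_{\langle r,y\rangle}=(\llbracket T\rrbracket_{\langle s,x\rangle})_{\langle r,y\rangle}$ for each $r\in\llbracket R\rrbracket$. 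Split into two subcases. If $x=y$, a direct check on $\mathsf{T}^0$ shows $(\llbracket\cdot\rrbracket^0_{\langle s,x\rangle})_{\langle r,y\rangle}=\llbracket\cdot\rrbracket^0_{\langle r,y\rangle}$ as assignments (the inner modification at $y=x$ overwrites the outer one), so the equality is automatic. If $x\neq y$, the same sort of check shows that the two modifications commute, and the hypothesis $x\notin\mathsf{F}(T)\setminus\{y\}$ together with $x\neq y$ upgrades to $x\notin\mathsf{F}(T)$; applying the IH to $T$ against the modified assignment $\llbracket\cdot\rrbracket^0_{\langle r,y\rangle}$ then yields $\llbracket T\rrbracket_{\langle r,y\rangle}=(\llbracket T\rrbracket_{\langle r,y\rangle})_{\langle s,x\rangle}$, which equals $(\llbracket T\rrbracket_{\langle s,x\rangle})_{\langle r,y\rangle}$ by commutation.

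The main obstacle, and the reason the IH must be stated uniformly in the assignment, is exactly this $\lambda$-case: the inductive call on $T$ is made not against the original interpretation but against its one-variable modification $\llbracket\cdot\rrbracket_{\langle r,y\rangle}$. A conceptually cleaner variant is to prove a coincidence lemma by the same induction --- namely that any two assignments agreeing on $\mathsf{F}(S)$ extend to interpretations agreeing on $S$ --- from which the stated proposition follows by comparing $\llbracket\cdot\rrbracket^0$ with $\llbracket\cdot\rrbracket^0_{\langle s,x\rangle}$, which agree off $\{x\}$ and hence on all of $\mathsf{F}(S)$.
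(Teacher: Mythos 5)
Your proof is correct and follows essentially the same route as the paper's: induction on the stratification $\mathsf{T}^n$ with the hypothesis held uniformly over assignments, the $\rho$/$\beta$ cases passing directly through the defining equations, and the $\lambda$-case split on $x=y$ (cancellation) versus $x\neq y$ (commutation of the two modifications plus the inductive hypothesis applied to the modified interpretation). The closing remark about a coincidence lemma is a reasonable variant but not needed; the argument as given matches the paper.
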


\begin{proof}
    First note that if $t\in\mathsf{T}^0$ and $x\in\mathsf{V}\setminus\mathsf{F}(t)$ then $x\neq t$ and hence $\llbracket t\rrbracket_{\langle s,x\rangle}=\llbracket t\rrbracket$.  Now assume the result holds for any $R,S\in\mathsf{T}^n$.  If $x\in\mathsf{V}\setminus\mathsf{F}(R)$ then $\llbracket\rho R\rrbracket_{\langle s,x\rangle}=\prod\llbracket R\rrbracket_{\langle s,x\rangle}=\prod\llbracket R\rrbracket=\llbracket\rho R\rrbracket$.  Likewise, if $x\in\mathsf{V}\setminus(\mathsf{F}(R)\cup\mathsf{F}(S))$ then $\llbracket\beta RS\rrbracket_{\langle s,x\rangle}=\llbracket R\rrbracket_{\langle s,x\rangle}(\llbracket S\rrbracket_{\langle s,x\rangle})=\llbracket\beta RS\rrbracket$.  Now let $\llbracket\cdot\rrbracket_{\langle r,x\rangle\langle s,y\rangle}=(\llbracket\cdot\rrbracket_{\langle r,x\rangle})_{\langle s,y\rangle}$ denote $\llbracket\cdot\rrbracket$ changed first at $x$ to $r$ and then at $y$ to $s$.  If $x\neq y$, we can reverse the order, i.e.
    \[x\neq y\qquad\Rightarrow\qquad\llbracket\cdot\rrbracket_{\langle r,x\rangle\langle s,y\rangle}=\llbracket\cdot\rrbracket_{\langle s,y\rangle\langle r,x\rangle}.\]
    But if $x=y$ then the second change cancels the first, i.e.
    \[\llbracket\cdot\rrbracket_{\langle r,x\rangle\langle s,x\rangle}=\llbracket\cdot\rrbracket_{\langle s,x\rangle}.\]
    Thus if $x\in\mathsf{V}\setminus(\mathsf{F}(R)\cup(\mathsf{F}(S)\setminus\{y\}))$ then
    \begin{align*}
        \llbracket\lambda yRS\rrbracket_{\langle s,x\rangle}&=\{\langle\llbracket S\rrbracket_{\langle s,x\rangle\langle r,y\rangle},r\rangle:r\in\llbracket R\rrbracket_{\langle s,x\rangle}\}\\
        &=\{\langle\llbracket S\rrbracket_{\langle r,y\rangle},r\rangle:r\in\llbracket R\rrbracket\}.\\
        &=\llbracket\lambda yRS\rrbracket.
    \end{align*}
    Indeed, if $x=y$ then $\llbracket S\rrbracket_{\langle s,x\rangle\langle r,y\rangle}=\llbracket S\rrbracket_{\langle s,y\rangle\langle r,y\rangle}=\llbracket S\rrbracket_{\langle r,y\rangle}$, while if $x\neq y$ then $x\notin\mathsf{F}(S)$ so $\llbracket S\rrbracket_{\langle s,x\rangle\langle r,y\rangle}=\llbracket S\rrbracket_{\langle r,y\rangle\langle s,x\rangle}=\llbracket S\rrbracket_{\langle r,y\rangle}$, by the inductive hypothesis.  This proves the result for all terms in $\mathsf{T}^{n+1}$, as required.
\end{proof}

Likewise, we define the \emph{bound variables} $\mathsf{B}(T)$ of any term $T$ so that, for all $t\in\mathsf{T}^0$, $x\in\mathsf{V}$ and $R,S\in\mathsf{T}$,
\begin{align*}
    \mathsf{B}(t)&=\emptyset.\\
    \mathsf{B}(\rho R)&=\mathsf{B}(R).\\
    \mathsf{B}(\beta RS)&=\mathsf{B}(R)\cup\mathsf{B}(S).\\
    \mathsf{B}(\lambda xRS)&=\mathsf{B}(R)\cup\mathsf{B}(S)\cup\{x\}.
\end{align*}
Equivalently, $x\in\mathsf{B}(T)$ if it appears next to the binder $\lambda$ in $T$, i.e.
\[\mathsf{B}(T)=\{x\in\mathsf{V}\mathrel{|}\lambda x\sqsubseteq T\}.\]
The set of all variables appearing in $T$ will be denoted by
\[\mathsf{V}(T)=\mathsf{F}(T)\cup\mathsf{B}(T)=\{x\in\mathsf{V}\mathrel{|}x\sqsubseteq T\}.\]

\section{Substitution}

To define substitution on $\mathsf{T}^0$, take any $x\in\mathsf{V}$ and $T\in\mathsf{T}$ and let
\[t_{[T/x]}:=\begin{cases}T&\text{if }t=x\\t&\text{if }t\in\mathsf{T}^0\setminus\{x\}.\end{cases}\]
Then extend from $\mathsf{T}^n$ to $\mathsf{T}^{n+1}$ by defining, for all $R,S\in\mathsf{T}^n$ and $y\in\mathsf{V}$,
\begin{align*}
    (\rho R)_{[T/x]}&:=\rho R_{[T/x]},\\
    (\beta RS)_{[T/x]}&:=\beta R_{[T/x]}S_{[T/x]}\quad\text{and}\\
    (\lambda yRS)_{[T/x]}&:=\begin{cases}\lambda yR_{[T/x]}S&\text{if }y=x\text{ or }x\notin\mathsf{F}(S)\\\lambda zR_{[T/x]}S_{[z/y][T/x]}&\text{otherwise, where }z\notin\mathsf{F}(\lambda yTS).\end{cases}
\end{align*}
As far as interpretations are concerned, it does not matter which variable outside of $\mathsf{F}(\lambda yTS)=\mathsf{F}(T)\cup(\mathsf{F}(S)\setminus\{y\})$ we take for $z$ above.  However, for some basic syntactic properties, it will be important to avoid unnecessary changes of bound variables.  Accordingly, let us take $z=y$ above whenever $y\notin\mathsf{F}(T)$ (otherwise take $z$ to be the shortest variable outside of $\mathsf{V}(TS)$, for example).  This ensures that
\[x\neq y\notin\mathsf{F}(T)\quad\Rightarrow\quad(\lambda yRS)_{[T/x]}=\lambda y R_{[T/x]}S_{[T/x]}.\]
This completes the recursive definition of substitution on $\mathsf{T}$.

Our original motivation for defining free variables was in terms of changing interpretations.  However, freeness is more accurately characterised by changing under substitution, specifically
\[\mathsf{F}(S)=\{x\in\mathsf{V}\mathrel{|}\forall T\in\mathsf{T}\setminus\{x\}\ (S_{[T/x]}\neq S)\}.\]
This is immediate from the following result.

\begin{proposition}\label{FreeChar}
        For all $x\in\mathsf{V}$ and $S,T\in\mathsf{T}$,
        \[T\neq x\in\mathsf{F}(S)\qquad\Leftrightarrow\qquad S_{[T/x]}\neq S.\]
\end{proposition}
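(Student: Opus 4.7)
The plan is to prove the biconditional by induction on the complexity level $n$ such that $S\in\mathsf{T}^n$, using the stratification $\mathsf{T}=\bigcup_n\mathsf{T}^n$. Both $\mathsf{F}$ and $[T/x]$ are defined by recursion on this stratification, so the induction lines up case by case with the shape of $S$. Throughout, I would rely on the unique readability afforded by the paper's parenthesis-free Polish-style syntax, so that, for example, $\beta R_{[T/x]}S_{[T/x]}=\beta RS$ iff $R_{[T/x]}=R$ and $S_{[T/x]}=S$, and similarly for $\rho$ and $\lambda$.

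For the base case $n=0$ I would check three subcases: if $S$ is a constant or a variable $y\neq x$, both sides of the biconditional are trivially false; if $S=x$, both sides simplify to $T\neq x$. The inductive step for $\rho R$ and $\beta RS$ is immediate: since $\mathsf{F}$ and substitution distribute transparently over these constructors, unique readability combined with the inductive hypothesis applied to $R$ (and, for $\beta$, to $S$) yields the result.

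The main obstacle is the abstraction case $S=\lambda yRS'$, which I would split along the three subcases used in the definition of substitution. If $y=x$, or if $y\neq x$ and $x\notin\mathsf{F}(S')$, then the substitution clause leaves $S'$ untouched, and in both scenarios $x\in\mathsf{F}(\lambda yRS')$ reduces to $x\in\mathsf{F}(R)$, so the conclusion collapses to the inductive hypothesis applied to $R$. The remaining subcase has $y\neq x$ and $x\in\mathsf{F}(S')$, so $x\in\mathsf{F}(\lambda yRS')$ is automatic and the left-hand side reduces to $T\neq x$. I would then split on whether $y\in\mathsf{F}(T)$. If $y\notin\mathsf{F}(T)$, the paper's convention picks $z=y$, and a short auxiliary induction -- in which the $\lambda$-renaming clause is never invoked nontrivially, since $\mathsf{F}(y)=\{y\}$ forces the chosen renaming variable to equal the binder -- gives $S'_{[y/y]}=S'$, whence $S'_{[z/y][T/x]}=S'_{[T/x]}$ and the inductive hypothesis applied to both $R$ and $S'$ closes the case. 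If instead $y\in\mathsf{F}(T)$, then $z\neq y$ by the choice convention, so the binders $\lambda z$ and $\lambda y$ already differ and the right-hand side holds; simultaneously, $y\in\mathsf{F}(T)$ combined with $y\neq x$ forces $T\neq x$ (else $\mathsf{F}(T)=\{x\}$ could not contain $y$), giving the left-hand side as well. The bookkeeping in this last subcase -- especially the decisive use of the convention for $z$ introduced just before the proposition -- is the one genuinely delicate point in the argument.
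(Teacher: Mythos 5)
Your proof is correct and follows essentially the same route as the paper: a structural induction over the stratification $\mathsf{T}=\bigcup_n\mathsf{T}^n$, with the same case split in the $\lambda$-clause and the same decisive appeal to the convention $z=y$ when $y\notin\mathsf{F}(T)$ (and to $S_{[y/y]}=S$). The only difference is organizational -- you run a single induction on the full biconditional where the paper runs three separate inductions (two for $\Leftarrow$, one for $\Rightarrow$) -- and this changes nothing of substance.
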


\begin{proof}
    First we claim that
    \[x\notin\mathsf{F}(S)\quad\Rightarrow\quad S_{[T/x]}=S.\]
    To see this, note that if $t\in\mathsf{T}^0$ then $x\notin\mathsf{F}(t)\subseteq\{t\}$ means $t_{[T/x]}=t$.  Now assume the claim holds for all $S\in\mathsf{T}^n$ and take any $R,S\in\mathsf{T}^n$.  If $x\notin\mathsf{F}(\rho R)=\mathsf{F}(R)$ then $(\rho R)_{[T/x]}=\rho R_{[T/x]}=\rho R$.  Likewise, if $x\notin\mathsf{F}(\beta RS)=\mathsf{F}(S)\cup\mathsf{F}(R)$ then $(\beta RS)_{[T/x]}=\beta R_{[T/x]}S_{[T/x]}=\beta RS$.  If $x\notin\mathsf{F}(\lambda yRS)$ then either $y=x\notin\mathsf{F}(R)$ or $y\neq x\notin\mathsf{F}(R)\cup\mathsf{F}(S)$ and so in both cases $(\lambda yRS)_{[T/x]}=\lambda yR_{[T/x]}S=\lambda yRS$.  This proves the claim on $\mathsf{T}^{n+1}$ and hence on $\mathsf{T}$, by induction.
    
    An even simpler inductive argument proves $S_{[x/x]}=S$, for all $S\in\mathsf{T}$, the only slightly non-trivial thing to note is that $y\neq x$ implies $y\notin\mathsf{F}(x)$ and hence $(\lambda yRS)_{[x/x]}=\lambda yR_{[x/x]}S_{[x/x]}=\lambda yRS$.  This completes the proof of the $\Leftarrow$ part of the result.

    For the $\Rightarrow$ part first note $t\in\mathsf{T}^0$ and $T\neq x\in\mathsf{F}(t)=\{t\}$ implies $t_{[T/x]}=x_{[T/x]}=T\neq t$, proving $\Rightarrow$ on $\mathsf{T}^0$.  Now assume $\Rightarrow$ holds for all $S\in\mathsf{T}^n$ and take any $R,S\in\mathsf{T}^n$.  Then $T\neq x\in\mathsf{F}(\rho R)=\mathsf{F}(R)$ implies $(\rho R)_{[T/x]}=\rho R_{[T/x]}\neq\rho R$.  Also $T\neq x\in\mathsf{F}(\beta RS)=\mathsf{F}(R)\cup\mathsf{F}(S)$ implies $R\neq R_{[T/x]}$ or $S\neq S_{[T/x]}$ and hence $(\beta RS)_{[T/x]}=\beta R_{[T/x]}S_{[T/x]}\neq\beta RS$.  Now if $T\neq x\in\mathsf{F}(\lambda yRS)$ then $y=x\in\mathsf{F}(R)$ or $y\neq x\in\mathsf{F}(R)\cup\mathsf{F}(S)$.  In the former case $(\lambda yRS)_{[T/x]}=\lambda yR_{[T/x]}S\neq\lambda yRS$ while in the latter case either $R_{[T/x]}\neq R$ or $S_{[T/x]}\neq S$ and hence $S_{[z/y][T/x]}\neq S$, whenever $z\notin\mathsf{F}(\lambda yTS)$, which yields $(\lambda yRS)_{[T/x]}=\lambda zR_{[T/x]}S_{[z/y][T/x]}\neq\lambda yRS$.  This proves $\Rightarrow$ on $\mathsf{T}^{n+1}$ and hence on $\mathsf{T}$, by induction.    
\end{proof}

We can now verify that substitutions have the desired interpretation.

\begin{proposition}\label{SubstitutivityProp}
    For any interpretation $\llbracket\cdot\rrbracket$, $x\in\mathsf{V}$ and $S,T\in\mathsf{T}$,
    \begin{equation}\label{Substitutivity}
        \tag{Substitutivity}\llbracket S_{[T/x]}\rrbracket=\llbracket S\rrbracket_{\langle\llbracket T\rrbracket,x\rangle}.
    \end{equation}
\end{proposition}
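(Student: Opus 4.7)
I would prove this by induction on the natural number $n$ such that $S \in \mathsf{T}^n$, with a nested case split mirroring the recursive definition of substitution.

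For the base case $S \in \mathsf{T}^0$, there are two subcases. If $S = x$, then $S_{[T/x]} = T$ by definition, and $\llbracket x \rrbracket_{\langle \llbracket T \rrbracket, x \rangle} = \llbracket T \rrbracket$, so both sides equal $\llbracket T \rrbracket$. If $S \in \mathsf{T}^0 \setminus \{x\}$, then $S_{[T/x]} = S$ and $\llbracket S \rrbracket_{\langle \llbracket T \rrbracket, x\rangle} = \llbracket S \rrbracket$ by the definition of the modified assignment. For the inductive step with $S$ of the form $\rho R$ or $\beta R S'$, the argument is immediate from the definitions together with the inductive hypothesis applied to $R$ and $S'$: substitution distributes past $\rho$ and $\beta$, and so does the modified interpretation.

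The heart of the proof is the $\lambda$-case $S = \lambda y R S'$, where I follow the three branches of the substitution definition. When $y = x$, we have $(\lambda y R S')_{[T/x]} = \lambda y R_{[T/x]} S'$; after unfolding both interpretations, the required identity reduces to observing that $\llbracket S' \rrbracket_{\langle \llbracket T \rrbracket, x\rangle \langle r, y\rangle} = \llbracket S' \rrbracket_{\langle r, y \rangle}$ (the second change cancels the first, as noted in the proof of Proposition \ref{FreeProp}) and that $\llbracket R_{[T/x]} \rrbracket = \llbracket R \rrbracket_{\langle \llbracket T \rrbracket, x\rangle}$ by induction. When $y \neq x$ and $x \notin \mathsf{F}(S')$, the substitution again leaves $S'$ untouched; since $x \neq y$ the modifications commute, and Proposition \ref{FreeProp} applied to the interpretation $\llbracket \cdot \rrbracket_{\langle r, y\rangle}$ discards the redundant $\langle \llbracket T \rrbracket, x\rangle$ change.

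The main obstacle is the final subcase, $y \neq x$ and $x \in \mathsf{F}(S')$, requiring the fresh variable $z \notin \mathsf{F}(\lambda y T S')$. Here $(\lambda y R S')_{[T/x]} = \lambda z R_{[T/x]} S'_{[z/y][T/x]}$, and I would apply the inductive hypothesis twice: first to rewrite $\llbracket S'_{[z/y][T/x]} \rrbracket_{\langle r, z\rangle}$ as $\llbracket S'_{[z/y]} \rrbracket_{\langle r, z\rangle \langle \llbracket T \rrbracket_{\langle r, z\rangle}, x\rangle}$, and then again to push the $[z/y]$ substitution into the interpretation. At this point I need to know that $\llbracket T \rrbracket_{\langle r, z\rangle} = \llbracket T \rrbracket$ (true because $z \notin \mathsf{F}(T)$), that $z \neq x$ and $z \neq y$ (both forced, since $x \in \mathsf{F}(S')$ implies $x \in \mathsf{V}(TS')$ and $y \in \mathsf{F}(T) \subseteq \mathsf{V}(T)$ in the branch where renaming actually occurs), and that $z \notin \mathsf{F}(S')$ (which follows from $z \notin \mathsf{F}(S') \setminus \{y\}$ and $z \neq y$). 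Commuting the three modifications past each other and invoking Proposition \ref{FreeProp} to drop the $\langle r, z\rangle$ change then collapses everything to $\llbracket S' \rrbracket_{\langle \llbracket T \rrbracket, x\rangle \langle r, y \rangle}$, matching the right-hand side. The inductive step for $\lambda$ thus reduces entirely to careful bookkeeping of the commutation and cancellation lemmas implicit in Proposition \ref{FreeProp}.
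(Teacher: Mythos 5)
Your proposal is correct and follows essentially the same route as the paper: induction on the stratification level $\mathsf{T}^n$, the same three subcases for $\lambda$-terms, the same commutation/cancellation facts from \Cref{FreeProp}, and the same double application of the inductive hypothesis (once for $[T/x]$ and once for $[z/y]$, each with a suitably modified interpretation) in the hard renaming case — the paper merely runs that final chain of equalities in the opposite direction, and likewise dispatches the $z=y$ possibility as immediate.
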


\begin{proof}
    Certainly $\llbracket x\rrbracket_{\langle\llbracket T\rrbracket,x\rangle}=\llbracket T\rrbracket=\llbracket x_{[T/x]}\rrbracket$ and, for any $t\in\mathsf{T}^0\setminus\{x\}$, $\llbracket t\rrbracket_{\langle\llbracket T\rrbracket,x\rangle}=\llbracket t\rrbracket=\llbracket t_{[T/x]}\rrbracket$, proving the result for $S\in\mathsf{T}^0$.
    
    Now assume the result for all $S\in\mathsf{T}^n$ and take any $R,S\in\mathsf{T}^n$.  Note
    \[\llbracket\rho R\rrbracket_{\langle\llbracket T\rrbracket,x\rangle}=\prod\llbracket R\rrbracket_{\langle\llbracket T\rrbracket,x\rangle}=\prod\llbracket R_{[T/x]}\rrbracket=\llbracket\rho R_{[T/x]}\rrbracket=\llbracket(\rho R)_{[T/x]}\rrbracket.\]
    Likewise, $\llbracket\beta RS\rrbracket_{\langle\llbracket T\rrbracket,x\rangle}=\llbracket\beta R_{[T/x]}S_{[T/x]}\rrbracket=\llbracket(\beta RS)_{[T/x]}\rrbracket$.  Next note    
    \begin{align*}
        \llbracket\lambda xRS\rrbracket_{\langle\llbracket T\rrbracket,x\rangle}&=\{\langle\llbracket S\rrbracket_{\langle\llbracket T\rrbracket,x\rangle\langle r,x\rangle},r\rangle\mathrel{|}r\in\llbracket R\rrbracket_{\langle\llbracket T\rrbracket,x\rangle}\}\\
        &=\{\langle\llbracket S\rrbracket_{\langle r,x\rangle},r\rangle\mathrel{|}r\in\llbracket R_{[T/x]}\rrbracket\}\\
        &=\llbracket\lambda xR_{[T/x]}S\rrbracket\\
        &=\llbracket(\lambda xRS)_{[T/x]}\rrbracket.
    \end{align*}
    Now take any $y\in\mathsf{V}\setminus\{x\}$.  If $x\notin\mathsf{F}(S)$ then \Cref{FreeProp} yields $\llbracket S\rrbracket_{\langle\llbracket T\rrbracket,x\rangle\langle r,y\rangle}=\llbracket S\rrbracket_{\langle r,y\rangle\langle\llbracket T\rrbracket,x\rangle}=\llbracket S\rrbracket_{\langle r,y\rangle}$,  and so again
    \begin{align*}
        \llbracket\lambda yRS\rrbracket_{\langle\llbracket T\rrbracket,x\rangle}&=\{\langle\llbracket S\rrbracket_{\langle\llbracket T\rrbracket,x\rangle\langle r,y\rangle},r\rangle\mathrel{|}r\in\llbracket R\rrbracket_{\langle\llbracket T\rrbracket,x\rangle}\}\\
        &=\{\langle\llbracket S\rrbracket_{\langle r,y\rangle},r\rangle\mathrel{|}r\in\llbracket R_{[T/x]}\rrbracket\}\\
        &=\llbracket\lambda yR_{[T/x]}S\rrbracket\\
        &=\llbracket(\lambda yRS)_{[T/x]}\rrbracket.
    \end{align*}
    Otherwise $y\neq x\in\mathsf{F}(S)$.  For any $z\notin\mathsf{F}(\lambda yTS)(\ni x)$ and $r$, then $\llbracket T\rrbracket=\llbracket T\rrbracket_{\langle r,z\rangle}$.  Also, for any interpretation $\llparenthesis\cdot\rrparenthesis$, $\llparenthesis S_{[z/y]}\rrparenthesis_{\langle r,z\rangle}=\llparenthesis S\rrparenthesis_{\langle r,y\rangle}$ -- If $z=y$ then this is immediate, otherwise $y\neq z\notin\mathsf{F}(S)$ and hence
    \[\llparenthesis S_{[z/y]}\rrparenthesis_{\langle r,z\rangle}=\llparenthesis S\rrparenthesis_{\langle r,z\rangle\langle\llparenthesis z\rrparenthesis_{\langle r,z\rangle},y\rangle}=\llparenthesis S\rrparenthesis_{\langle r,z\rangle\langle r,y\rangle}=\llparenthesis S\rrparenthesis_{\langle r,y\rangle\langle r,z\rangle}=\llparenthesis S\rrparenthesis_{\langle r,y\rangle}.\]
    In particular we can take $\llparenthesis\cdot\rrparenthesis=\llbracket\cdot\rrbracket_{\langle\llbracket T\rrbracket,x\rangle}=\llbracket\cdot\rrbracket_{\langle\llbracket T\rrbracket_{\langle r,z\rangle},x\rangle}$.  Noting $x\neq z$,
    \begin{align*}
        \llbracket S\rrbracket_{\langle\llbracket T\rrbracket,x\rangle\langle r,y\rangle}&=\llbracket S_{[z/y]}\rrbracket_{\langle\llbracket T\rrbracket_{\langle r,z\rangle},x\rangle\langle r,z\rangle}=\llbracket S_{[z/y]}\rrbracket_{\langle r,z\rangle\langle\llbracket T\rrbracket_{\langle r,z\rangle},x\rangle}\\
        &=\llbracket S_{[z/y][T/x]}\rrbracket_{\langle r,z\rangle}.
    \end{align*}
    From this it follows that
    \begin{align*}
        \llbracket\lambda yRS\rrbracket_{\langle\llbracket T\rrbracket,x\rangle}&=\{\langle\llbracket S\rrbracket_{\langle\llbracket T\rrbracket,x\rangle\langle r,y\rangle},r\rangle\mathrel{|}r\in\llbracket R\rrbracket_{\langle\llbracket T\rrbracket,x\rangle}\}\\
        &=\{\langle\llbracket S_{[z/y][T/x]}\rrbracket_{\langle r,z\rangle},r\rangle\mathrel{|}r\in\llbracket R_{[T/x]}\rrbracket\}\\
        &=\llbracket\lambda zR_{[T/x]}S_{[z/y][T/x]}\rrbracket=\llbracket(\lambda yRS)_{[T/x]}\rrbracket.
    \end{align*}
    This completes the induction.
\end{proof}

Here is another observation on substitutions that is sometimes useful.

\begin{proposition}\label{FreeSub}
    For all $x\in\mathsf{V}$ and $S,T\in\mathsf{T}$,
    \[x\in\mathsf{F}(S)\quad\Rightarrow\quad\mathsf{F}(S_{[T/x]})=\mathsf{F}(\lambda xTS).\]
\end{proposition}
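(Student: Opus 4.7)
The plan is to induct on $n$, proving the claim for all $S \in \mathsf{T}^n$ simultaneously in $T$ and $x$. Writing $\mathsf{F}(\lambda x T S) = \mathsf{F}(T) \cup (\mathsf{F}(S) \setminus \{x\})$, the goal reduces to establishing $\mathsf{F}(S_{[T/x]}) = \mathsf{F}(T) \cup (\mathsf{F}(S) \setminus \{x\})$ whenever $x \in \mathsf{F}(S)$. For the base case $S \in \mathsf{T}^0$, the hypothesis $x \in \mathsf{F}(S)$ forces $S = x$, so $S_{[T/x]} = T$ and both sides equal $\mathsf{F}(T)$.

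For the inductive step on $\mathsf{T}^{n+1}$, with subterms $R, U \in \mathsf{T}^n$: the $\rho R$ case follows immediately from the IH on $R$, and for $\beta R U$ with $x \in \mathsf{F}(R) \cup \mathsf{F}(U)$, I handle each factor separately, applying the IH when $x$ is free and invoking the auxiliary fact $x \notin \mathsf{F}(W) \Rightarrow W_{[T/x]} = W$ from inside the proof of Proposition~\ref{FreeChar} when $x$ is not free; the two situations combine cleanly because in the latter case $\mathsf{F}(W) \setminus \{x\} = \mathsf{F}(W)$.

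The main obstacle is the $\lambda y R U$ case, which splits along the clauses of the substitution definition. Three subcases are routine: (a) $y = x$, which forces $x \in \mathsf{F}(R)$ and needs only the IH on $R$; (b) $y \neq x$ with $x \notin \mathsf{F}(U)$, again needing only the IH on $R$; and (c) $y \neq x$, $x \in \mathsf{F}(U)$, $y \notin \mathsf{F}(T)$, where the convention of taking $z = y$ produces $(\lambda y R U)_{[T/x]} = \lambda y R_{[T/x]} U_{[T/x]}$ and the IH applied to $U$ combines with whatever holds for $R$ (which always absorbs $\mathsf{F}(T)$) to give the target.

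The delicate subcase is $y \neq x$, $x \in \mathsf{F}(U)$, $y \in \mathsf{F}(T)$, where the substitution yields $\lambda z R_{[T/x]} U_{[z/y][T/x]}$ for some $z \notin \mathsf{F}(\lambda y T U) = \mathsf{F}(T) \cup (\mathsf{F}(U) \setminus \{y\})$. Here I would apply the IH to $U$ twice: first with $[z/y]$, noting that variable-for-variable substitution preserves the stratum $\mathsf{T}^n$, to conclude $\mathsf{F}(U_{[z/y]}) = \{z\} \cup (\mathsf{F}(U) \setminus \{y\})$ when $y \in \mathsf{F}(U)$ and $U_{[z/y]} = U$ otherwise; and then with $[T/x]$ on $U_{[z/y]}$, noting $x \in \mathsf{F}(U_{[z/y]})$ since $x \in \mathsf{F}(U) \setminus \{y\}$ and $x \neq z$. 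Taking $\setminus \{z\}$ at the outer $\lambda$-step, the spurious $\{z\}$ vanishes because $z \notin \mathsf{F}(T)$ and $z \notin \mathsf{F}(U) \setminus \{y\}$, leaving $\mathsf{F}(T) \cup (\mathsf{F}(U) \setminus \{x, y\})$; uniting with $\mathsf{F}(R_{[T/x]})$ produces exactly $\mathsf{F}(T) \cup ((\mathsf{F}(R) \cup (\mathsf{F}(U) \setminus \{y\})) \setminus \{x\}) = \mathsf{F}(\lambda x T (\lambda y R U))$, as required.
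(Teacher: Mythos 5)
Your proof is correct and follows essentially the same route as the paper's: induction on the strata $\mathsf{T}^n$, with the same case analysis on term formation and a double application of the inductive hypothesis (first to $U_{[z/y]}$, then to the result) in the bound-variable-renaming subcase. You are somewhat more explicit than the paper about the subcases of the $\lambda$-clause and about the fact that variable-for-variable substitution preserves the stratum (which the paper uses implicitly), but the underlying argument is the same.
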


\begin{proof}
    If $t\in\mathsf{T}^0$ then $x\in\mathsf{F}(t)\subseteq\{t\}$ means $t_{[T/x]}=x_{[T/x]}=T$ so $\mathsf{F}(t_{[T/x]})=\mathsf{F}(T)=\mathsf{F}(\lambda xTt)$.  This proves the result on $\mathsf{T}^0$.  Now assume the result holds on $\mathsf{T}^n$ and take $R,S\in\mathsf{T}^n$.  If $x\in\mathsf{F}(\rho R)=\mathsf{F}(R)$ then
    \[\mathsf{F}((\rho R)_{[T/x]})=\mathsf{F}(\rho R_{[T/x]})=\mathsf{F}(R_{[T/x]})=\mathsf{F}(\lambda xTR)=\mathsf{F}(\lambda xT\rho R).\]
    Likewise, if $x\in\mathsf{F}(\beta RS)=\mathsf{F}(R)\cup\mathsf{F}(S)$ then either $x\in\mathsf{F}(S)$ or $x\in\mathsf{F}(R)$ but in either case $\mathsf{F}((\beta RS)_{[T/x]})=\mathsf{F}(\beta R_{[T/x]}S_{[T/x]})$ contains $\mathsf{F}(T)$ so
    \[\mathsf{F}((\beta RS)_{[T/x]})=(\mathsf{F}(R)\setminus\{x\})\cup(\mathsf{F}(S)\setminus\{x\})\cup\mathsf{F}(T)=\mathsf{F}(\lambda xT\beta RS).\]
    Now if $x\in\mathsf{F}(\lambda yRS)$ then either $y=x\in\mathsf{F}(R)$ or $y\neq x\in\mathsf{F}(R)\cup\mathsf{F}(S)$.  In the former case
    \begin{align*}
        \mathsf{F}((\lambda yRS)_{[T/x]})&=\mathsf{F}(\lambda yR_{[T/x]}S)\\
        &=\mathsf{F}(T)\cup(\mathsf{F}(R)\setminus\{x\})\cup(\mathsf{F}(S)\setminus\{y\})\\
        &=\mathsf{F}(T)\cup(\mathsf{F}(\lambda yRS)\setminus\{x\}),
    \end{align*}
    as required.  Similarly, in the latter case
    \begin{align*}
        \mathsf{F}((\lambda yRS)_{[T/x]})&=\mathsf{F}(\lambda zR_{[T/x]}S_{[z/y][T/x]})\\
        &=\mathsf{F}(T)\cup(\mathsf{F}(R)\setminus\{x\})\cup(\mathsf{F}(S)\setminus\{x,y\})\\
        &=\mathsf{F}(T)\cup(\mathsf{F}(\lambda yRS)\setminus\{x\}).
    \end{align*}
    This proves the result on $\mathsf{T}^{n+1}$ and hence on $\mathsf{T}$, by induction.
\end{proof}

\section{Conversion}

We can now show that changing bound variables does not affect the interpretation of the resulting term.

\begin{proposition}\label{AlphaEqual}
    For any interpretation $\llbracket\cdot\rrbracket$, $x,y\in\mathsf{V}$ and $R,S\in\mathsf{T}$,
    \[y\notin\mathsf{F}(S)\quad\Rightarrow\quad\llbracket\lambda xRS\rrbracket=\llbracket\lambda yRS_{[y/x]}\rrbracket.\]
\end{proposition}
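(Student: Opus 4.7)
The plan is to unfold both sides using the defining formula for $\lambda$-terms and reduce the equality to a pointwise comparison of values. By definition, for each $r\in\llbracket R\rrbracket$ we have $\llbracket\lambda xRS\rrbracket(r)=\llbracket S\rrbracket_{\langle r,x\rangle}$ and $\llbracket\lambda yRS_{[y/x]}\rrbracket(r)=\llbracket S_{[y/x]}\rrbracket_{\langle r,y\rangle}$, and both functions have the same domain $\llbracket R\rrbracket$. So it suffices to prove, for every $r\in\llbracket R\rrbracket$, that
\[
\llbracket S\rrbracket_{\langle r,x\rangle}=\llbracket S_{[y/x]}\rrbracket_{\langle r,y\rangle}.
\]

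To get this, I would first dispose of the trivial case $x=y$: here $S_{[y/x]}=S_{[x/x]}=S$ (noted in the proof of \Cref{FreeChar}), so both sides are the same. For $x\neq y$, apply \Cref{SubstitutivityProp} to the interpretation $\llbracket\cdot\rrbracket_{\langle r,y\rangle}$ with the term $T=y$, so that $\llbracket y\rrbracket_{\langle r,y\rangle}=r$. This yields
\[
\llbracket S_{[y/x]}\rrbracket_{\langle r,y\rangle}=\llbracket S\rrbracket_{\langle r,y\rangle\langle r,x\rangle}.
\]
Since $x\neq y$, the two assignment changes commute (an observation used in the proof of \Cref{FreeProp}), giving $\llbracket S\rrbracket_{\langle r,y\rangle\langle r,x\rangle}=\llbracket S\rrbracket_{\langle r,x\rangle\langle r,y\rangle}$. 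Finally, the hypothesis $y\notin\mathsf{F}(S)$ together with \Cref{FreeProp} applied to the interpretation $\llbracket\cdot\rrbracket_{\langle r,x\rangle}$ collapses the redundant change at $y$, giving $\llbracket S\rrbracket_{\langle r,x\rangle\langle r,y\rangle}=\llbracket S\rrbracket_{\langle r,x\rangle}$, as required.

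The argument is essentially a three-line chain of equalities, and no induction on term complexity is needed here since \Cref{SubstitutivityProp} and \Cref{FreeProp} already encapsulate the recursion. The only real subtlety, and the place where one has to be careful, is the case split on $x=y$ versus $x\neq y$: swapping the order of the two variable changes is only valid in the latter case, and forgetting the former case would leave a gap. Everything else is a direct application of the substitution and free-variable lemmas already proved.
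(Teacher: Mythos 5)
Your proof is correct and follows essentially the same route as the paper's: apply \eqref{Substitutivity} to the interpretation $\llbracket\cdot\rrbracket_{\langle r,y\rangle}$ with $T=y$ to obtain $\llbracket S_{[y/x]}\rrbracket_{\langle r,y\rangle}=\llbracket S\rrbracket_{\langle r,y\rangle\langle r,x\rangle}$, then collapse the redundant change at $y$ using \Cref{FreeProp}. The only difference is presentational: you make the $x=y$ case and the commutation of the two assignment changes explicit, where the paper's one-line chain leaves these implicit.
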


\begin{proof}
    First note \eqref{Substitutivity} yields
    \[\llbracket S_{[y/x]}\rrbracket_{\langle r,y\rangle}=\llbracket S\rrbracket_{\langle r,y\rangle\langle\llbracket y\rrbracket_{\langle r,y\rangle},x\rangle}=\llbracket S\rrbracket_{\langle r,y\rangle\langle r,x\rangle}=\llbracket S\rrbracket_{\langle r,x\rangle},\]
    by \Cref{FreeProp}, as $y\notin\mathsf{F}(S)$.  Thus
    \begin{align*}
        \llbracket\lambda xRS\rrbracket&=\{\langle\llbracket S\rrbracket_{\langle r,x\rangle},r\rangle\mathrel{|}r\in\llbracket R\rrbracket\}\\
        &=\{\langle\llbracket S_{[y/x]}\rrbracket_{\langle r,y\rangle},r\rangle\mathrel{|}r\in\llbracket R\rrbracket\}\\
        &=\llbracket\lambda yRS_{[y/x]}\rrbracket.\qedhere
    \end{align*}
\end{proof}

If we even have $y\notin\mathsf{V}(S)$ then we can reverse this process.

\begin{proposition}\label{AlphaReverse}
    For any $x,y\in\mathsf{V}$ and $S\in\mathsf{T}$,
    \[y\notin\mathsf{V}(S)\quad\Rightarrow\quad S=S_{[y/x][x/y]}.\]
\end{proposition}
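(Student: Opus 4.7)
The plan is to proceed by induction on $n$ with $S \in \mathsf{T}^n$, following the pattern established throughout this section. First dispose of the trivial case $x = y$: here $S_{[y/x][x/y]} = S_{[x/x][x/x]} = S$ directly from the identity $S_{[x/x]} = S$ established in the proof of \Cref{FreeChar}. So assume henceforth $x \neq y$, and note that $y \notin \mathsf{V}(S)$ implies in particular $y \notin \mathsf{F}(S)$, and that $y$ differs from every binder occurring in $S$.

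Base case $S \in \mathsf{T}^0$: split on whether $S = x$. If $S = x$, then $S_{[y/x]} = y$ and $y_{[x/y]} = x = S$. Otherwise $S \in \mathsf{T}^0 \setminus \{x\}$, and since $y \notin \mathsf{V}(S)$ also forces $S \neq y$, both substitutions act as the identity. The cases $S = \rho R$ and $S = \beta R R'$ in the inductive step are then immediate from the inductive hypothesis applied to the subterms, since $y \notin \mathsf{V}(S)$ propagates to $\mathsf{V}(R)$ and $\mathsf{V}(R')$.

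The only case requiring genuine care is $S = \lambda z R R'$, which I would split further on whether $z = x$. If $z = x$, the first branch of the definition of substitution applies at both steps: first $(\lambda x R R')_{[y/x]} = \lambda x R_{[y/x]} R'$ (because the binder equals $x$), and then $(\lambda x R_{[y/x]} R')_{[x/y]} = \lambda x R_{[y/x][x/y]} R'$ (because $y \notin \mathsf{F}(R')$ since $y \notin \mathsf{V}(R')$), which equals $\lambda x R R'$ by the inductive hypothesis on $R$. If $z \neq x$, then since $z \neq y$ as well, the simplifying remark in Section 4 (that $x \neq z \notin \mathsf{F}(T)$ implies $(\lambda z R R')_{[T/x]} = \lambda z R_{[T/x]} R'_{[T/x]}$) applies twice, first with $T = y$ and then with $T = x$, yielding $(\lambda z R R')_{[y/x][x/y]} = \lambda z R_{[y/x][x/y]} R'_{[y/x][x/y]} = \lambda z R R'$ by the inductive hypothesis on $R$ and $R'$.

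The main obstacle is ensuring that neither substitution step triggers the renaming branch of the $\lambda$-substitution definition; any such rename would introduce a fresh binder $z'$ and break the equality $S = S_{[y/x][x/y]}$ on the nose. The hypothesis $y \notin \mathsf{V}(S)$ is exactly what is needed to avoid this: it guarantees $z \neq y$ and $y \notin \mathsf{F}(R')$, which together with the case-split on whether $z = x$ always directs us to the non-renaming branch via the convention and the simplifying remark.
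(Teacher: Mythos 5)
Your proof is correct and follows essentially the same route as the paper: induction on $\mathsf{T}^n$, with the only substantive case being $\lambda zRR'$, split on $z=x$ versus $z\neq x$, using the binder-equality branch and the simplifying convention ($x\neq z\notin\mathsf{F}(T)$ forces the non-renaming branch) exactly as the paper does. Your explicit up-front disposal of the $x=y$ case is a minor presentational difference (the paper handles it implicitly via $y\notin\mathsf{V}(S)$ excluding $y$ from the binders), not a different argument.
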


\begin{proof}
    Note $x_{[y/x][x/y]}=y_{[x/y]}=x$.  If $t\in\mathsf{T}^0\setminus\{x\}$ and $y\notin\mathsf{V}(t)$ and hence $y\neq t\neq x$ then $t_{[y/x][x/y]}=t_{[x/y]}=t$, proving the result for $\mathsf{T}^0$.  Now assume the result for $\mathsf{T}^n$ and take $R,S\in\mathsf{T}$.  Then $y\notin\mathsf{V}(\lambda xRS)$ implies $x\neq y\notin\mathsf{F}(S)$ and hence
    \[(\lambda xRS)_{[y/x][x/y]}=(\lambda xR_{[y/x]}S)_{[x/y]}=\lambda xR_{[y/x][x/y]}S_{[x/y]}=\lambda xRS.\]
    Also, for any $z\in\mathsf{V}\setminus\{x\}$, $y\notin\mathsf{V}(\lambda zRS)$ implies $y\neq z$ and hence
    \[(\lambda zRS)_{[y/x][x/y]}=(\lambda zR_{[y/x]}S_{[y/x]})_{[x/y]}=\lambda zR_{[y/x][x/y]}S_{[y/x][x/y]}=\lambda xRS.\]
    This completes the proof for $\lambda$-terms in $\mathsf{T}^{n+1}$, while for $\rho$ and $\beta$ terms this is immediate.  Now the result on $\mathsf{T}$ holds by induction.
\end{proof}

This motivates the definition of ${\equiv^\alpha}\subseteq\mathsf{T}\times\mathsf{T}$ by
\[{\equiv^\alpha}=\{\langle\lambda xRS,\lambda yRS_{[y/x]}\rangle:x\notin\mathsf{B}(S)\text{ and }y\notin\mathsf{V}(S)\}.\]
We call this $\equiv^\alpha$ the \emph{head $\alpha$-conversion} relation.

\begin{corollary}
    The head $\alpha$-conversion relation $\equiv^\alpha$ is symmetric.
\end{corollary}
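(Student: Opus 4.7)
The plan is to fix an arbitrary pair in $\equiv^\alpha$, say $\langle\lambda xRS,\lambda yRS_{[y/x]}\rangle$ with $x\notin\mathsf{B}(S)$ and $y\notin\mathsf{V}(S)$, and exhibit the reversed pair $\langle\lambda yRS_{[y/x]},\lambda xRS\rangle$ as another instance of the defining schema. Applying the definition with the roles of $x$ and $y$ swapped and $S$ replaced by $S_{[y/x]}$ shows that $\lambda yRS_{[y/x]}$ is $\equiv^\alpha$-related to $\lambda xR(S_{[y/x]})_{[x/y]}$, provided $y\notin\mathsf{B}(S_{[y/x]})$ and $x\notin\mathsf{V}(S_{[y/x]})$. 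The proof therefore reduces to three verifications: (i) $(S_{[y/x]})_{[x/y]}=S$, (ii) $y\notin\mathsf{B}(S_{[y/x]})$, and (iii) $x\notin\mathsf{V}(S_{[y/x]})$.

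Step (i) is immediate from \Cref{AlphaReverse} applied with the hypothesis $y\notin\mathsf{V}(S)$. For (ii), the crucial observation is that the $\alpha$-renaming branch in the recursive $\lambda$-case of substitution only fires when the bound variable $z$ of the sub-abstraction lies in $\mathsf{F}(T)$; here $T=y$ so $\mathsf{F}(T)=\{y\}$, and since $y\notin\mathsf{V}(S)$ (a property inherited by every subterm of $S$), no bound variable encountered during the recursive computation of $S_{[y/x]}$ equals $y$. Hence the renaming branch never fires and $\mathsf{B}(S_{[y/x]})=\mathsf{B}(S)$, which, together with $y\notin\mathsf{B}(S)$, gives $y\notin\mathsf{B}(S_{[y/x]})$.

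For (iii), split on whether $x\in\mathsf{F}(S)$. If $x\in\mathsf{F}(S)$, then \Cref{FreeSub} yields $\mathsf{F}(S_{[y/x]})=\mathsf{F}(y)\cup(\mathsf{F}(S)\setminus\{x\})=\{y\}\cup(\mathsf{F}(S)\setminus\{x\})$, which omits $x$. If $x\notin\mathsf{F}(S)$, then the claim established at the start of the proof of \Cref{FreeChar} gives $S_{[y/x]}=S$, so $\mathsf{F}(S_{[y/x]})=\mathsf{F}(S)$ again omits $x$. Combined with (ii) and the hypothesis $x\notin\mathsf{B}(S)$, this yields $x\notin\mathsf{V}(S_{[y/x]})$.

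The main obstacle to a fully formal write-up is (ii): the identity $\mathsf{B}(S_{[y/x]})=\mathsf{B}(S)$ is not recorded earlier as a lemma and needs a short but explicit induction on the $\mathsf{T}^n$-stratification, using the fact that $y\notin\mathsf{V}(S')$ is inherited by every subterm $S'$ of $S$ so that one always lands in the first (non-renaming) branch of the $\lambda$-case of the substitution definition. Everything else is either a direct appeal to an earlier result or a routine case split.
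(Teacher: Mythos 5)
Your proposal is correct and follows essentially the same route as the paper: both reduce symmetry to the identity $S=S_{[y/x][x/y]}$ via \Cref{AlphaReverse} and then verify $y\notin\mathsf{B}(S_{[y/x]})$, $x\notin\mathsf{B}(S_{[y/x]})$ and $x\notin\mathsf{F}(S_{[y/x]})$ using $\mathsf{B}(S_{[y/x]})=\mathsf{B}(S)$ and \Cref{FreeSub}. You are merely more explicit than the paper about the induction behind $\mathsf{B}(S_{[y/x]})=\mathsf{B}(S)$ and about the trivial case $x\notin\mathsf{F}(S)$, which the paper leaves implicit.
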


\begin{proof}
    If $R,S\in\mathsf{T}$, $x\in\mathsf{V}$ and $y\in\mathsf{V}\setminus\mathsf{V}(S)$, \Cref{AlphaReverse} yields
    \[\langle\lambda yRS_{[y/x]},\lambda xRS\rangle\ =\ \langle\lambda yRS_{[y/x]},\lambda xRS_{[y/x][x/y]}\rangle\ \in\ {\equiv^\alpha}.\]
    Indeed $\mathsf{B}(S)=\mathsf{B}(S_{[y/x]})$, as $y\notin\mathsf{V}(S)$, so $y\notin\mathsf{V}(S)\supseteq\mathsf{B}(S)=\mathsf{B}(S_{[y/x]})$, $x\notin\mathsf{B}(S)=\mathsf{B}(S_{[y/x]})$ and $x\notin\mathsf{F}(S_{[y/x]})$, by \Cref{FreeSub}.
\end{proof}

Given ${\vartriangleright}\subseteq\mathsf{T}\times\mathsf{T}$, define its \emph{contextual closure} ${\vartriangleright^\mathsf{c}}\subseteq\mathsf{T}\times\mathsf{T}$ by
\[R'\vartriangleright^\mathsf{c}C'\quad\Leftrightarrow\quad\exists P,Q,R,C\ (R\sqsubseteq_Q^PR',C\sqsubseteq_Q^PC'\text{ and }R\vartriangleright C).\]
Note $P$ and $Q$ above are not assumed to be terms, just strings (possibly even empty strings) which become terms when added to either side of $R$ and $C$.  Equivalently, $\vartriangleright^\mathsf{c}$ is the smallest relation on $\mathsf{T}$ containing $\vartriangleright$ such that, for all $x\in\mathsf{V}$ and $R,S,C\in\mathsf{T}$, $R\vartriangleright^\mathsf{c}C$ always implies
\[\beta SR\vartriangleright^\mathsf{c}\beta SC,\beta RS\vartriangleright^\mathsf{c}\beta CS,\lambda x SR\vartriangleright^\mathsf{c}\lambda xSC,\lambda xRS\vartriangleright^\mathsf{c}\lambda xCS\text{ and }\rho R\vartriangleright^\mathsf{c}\rho C.\]

We also define the reflexive closure of any ${\vartriangleright^\mathsf{r}}\subseteq\mathsf{T}\times\mathsf{T}$ by
\[R\vartriangleright^\mathsf{r}C\quad\Leftrightarrow\quad R\vartriangleright C\text{ or }R=C.\]
Note that if $\llbracket R\rrbracket=\llbracket C\rrbracket$ whenever $R\vartriangleleft C$ then it also follows that $\llbracket R\rrbracket=\llbracket C\rrbracket$ whenever $R\vartriangleleft^\mathsf{ctr}C$.  Defining \emph{$\alpha$-conversion} $\equiv_\alpha$ as the reflexive transitive contextual closure of head $\alpha$-conversion
\[\tag{$\alpha$-Conversion}{\equiv_\alpha}\ =\ {\equiv^{\alpha\mathsf{ctr}}},\]
the following is thus immediate from \Cref{AlphaEqual}.

\begin{corollary}\label{AlphaEqual2}
    For any interpretation $\llbracket\cdot\rrbracket$ and $R,C\in\mathsf{T}$,
    \[R\equiv_\alpha C\qquad\Rightarrow\qquad\llbracket R\rrbracket=\llbracket C\rrbracket.\]
\end{corollary}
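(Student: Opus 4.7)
The plan is to verify the general principle, alluded to in the paragraph just before the statement, that the property ``$\llbracket R\rrbracket = \llbracket C\rrbracket$ for every interpretation $\llbracket\cdot\rrbracket$'' is preserved under each of the three closure operations ${\cdot}^\mathsf{c}$, ${\cdot}^\mathsf{t}$ and ${\cdot}^\mathsf{r}$ that together build $\equiv_\alpha$ from $\equiv^\alpha$. Once this is established, the result follows at once, taking \Cref{AlphaEqual} as the base case: head $\alpha$-conversion already preserves interpretations for every $\llbracket\cdot\rrbracket$, and hence so does its reflexive transitive contextual closure. A key point is to formulate the hypothesis uniformly over all interpretations, so that the $\lambda$-case of the inductive step has enough ammunition.

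The reflexive and transitive closure steps are immediate: $R = C$ trivially gives $\llbracket R\rrbracket = \llbracket C\rrbracket$, and a finite chain of equalities composes. The substance is therefore in the contextual closure, for which I would use the equivalent inductive characterisation supplied in the text: $\vartriangleright^\mathsf{c}$ is the smallest extension of $\vartriangleright$ closed under the five constructors $\rho\cdot$, $\beta\cdot S$, $\beta S\cdot$, $\lambda xS\cdot$ and $\lambda x\cdot S$. It thus suffices to show that each constructor preserves the property. The $\rho$ and $\beta$ cases are instant from the defining identities: if $\llbracket R\rrbracket = \llbracket C\rrbracket$ (for every interpretation), then $\llbracket\rho R\rrbracket = \prod\llbracket R\rrbracket = \prod\llbracket C\rrbracket = \llbracket\rho C\rrbracket$, and similarly $\llbracket\beta SR\rrbracket = \llbracket S\rrbracket(\llbracket R\rrbracket) = \llbracket S\rrbracket(\llbracket C\rrbracket) = \llbracket\beta SC\rrbracket$, and likewise for $\beta\cdot S$.

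The $\lambda$-cases are where the uniformity over interpretations is used. For $\lambda xRS \vartriangleright^\mathsf{c} \lambda xCS$ the defining identity
\[\llbracket\lambda xRS\rrbracket = \{\langle\llbracket S\rrbracket_{\langle r,x\rangle},r\rangle : r\in\llbracket R\rrbracket\}\]
reduces the claim to equality of the indexing sets $\llbracket R\rrbracket = \llbracket C\rrbracket$, which holds by hypothesis. For $\lambda xSR \vartriangleright^\mathsf{c} \lambda xSC$ the same identity instead demands $\llbracket R\rrbracket_{\langle r,x\rangle} = \llbracket C\rrbracket_{\langle r,x\rangle}$ for each $r\in\llbracket S\rrbracket$; here we invoke the inductive hypothesis not for $\llbracket\cdot\rrbracket$ but for the modified interpretation $\llbracket\cdot\rrbracket_{\langle r,x\rangle}$, which is legitimate precisely because we are quantifying over all interpretations. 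This is the only even mildly subtle step; everything else is a mechanical verification, so there is no real obstacle beyond being careful with the quantification.
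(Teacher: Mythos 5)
Your proof is correct and follows essentially the same route as the paper, which simply declares the corollary ``immediate from \Cref{AlphaEqual}'' after remarking that interpretation-equality is preserved under the reflexive transitive contextual closure. You merely spell out that remark in full, and your observation that the $\lambda$-body case requires the hypothesis uniformly over all interpretations (so it can be applied to $\llbracket\cdot\rrbracket_{\langle r,x\rangle}$) is exactly the point the paper leaves implicit.
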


As $\equiv^\alpha$ is symmetric, so is $\equiv_\alpha$ and hence we could alternatively describe $\equiv_\alpha$ as the equivalence relation on $\mathsf{T}$ generated by $\equiv^{\alpha\mathsf{c}}$.  Let us denote the corresponding equivalence class of any $S\in\mathsf{T}$ by
\[[S]_\alpha\ =\ {\equiv_\alpha}[S]\ =\ \{R\in\mathsf{T}\mathrel{|}R\equiv_\alpha S\}.\]
Likewise, for any $\mathsf{T}'\subseteq\mathsf{T}$, let us define
\[[\mathsf{T}']_\alpha=\{[S]_\alpha\mathrel{|}S\in\mathsf{T}'\}.\]
In particular, $[\mathsf{T}]_\alpha$ denotes the set of all equivalence classes of terms.  These will be used below in \S\ref{Consequences} to define formal typing statements.

The symmetry of head $\alpha$-conversion relies on the conditions we placed on the variables involved in the substitution.  However, for more general $\alpha$-conversion, these can be weakened as per \Cref{AlphaEqual}.

\begin{proposition}
    For any $R,S\in\mathsf{T}$ and $x,y\in\mathsf{V}$,
    \[y\notin\mathsf{F}(S)\quad\Rightarrow\quad\lambda xRS\equiv_\alpha\lambda yRS_{[y/x]}.\]
\end{proposition}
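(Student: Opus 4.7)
The hypothesis $y\notin\mathsf{F}(S)$ is strictly weaker than the conjunction $x\notin\mathsf{B}(S)$ and $y\notin\mathsf{V}(S)$ required by head $\alpha$-conversion, since $S$ may still contain internal binders $\lambda x$ or $\lambda y$ (with $y$ only occurring bound, not free). The idea is to first clean up any such internal binders via contextual $\alpha$-conversion inside $S$, apply head $\alpha$-conversion at the outermost $\lambda$, and then transport the resulting equivalence back through the substitution $[y/x]$.

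I would start by establishing a renaming lemma: for any $S\in\mathsf{T}$ and any finite $W\subseteq\mathsf{V}$, there exists $\tilde S$ with $S\equiv_\alpha\tilde S$, $\mathsf{F}(\tilde S)=\mathsf{F}(S)$ and $\mathsf{B}(\tilde S)\cap W=\emptyset$. The proof proceeds by induction on the stratification level of $S\in\mathsf{T}^n$: atomic terms need no renaming; the $\rho$ and $\beta$ cases follow by recursively renaming subterms and invoking contextual closure; and for $\lambda zR_1S_1$ one recursively renames $R_1$ and $S_1$ with $W$ suitably enlarged, and then, if $z\in W$, applies head $\alpha$-conversion at the outermost binder to replace $z$ by a fresh variable chosen outside $W\cup\mathsf{V}(R_1S_1)$.

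Applying the renaming lemma with $W=\{x,y\}$ yields $\tilde S\equiv_\alpha S$ with $\mathsf{F}(\tilde S)=\mathsf{F}(S)$ and $\{x,y\}\cap\mathsf{B}(\tilde S)=\emptyset$. Since $y\notin\mathsf{F}(S)=\mathsf{F}(\tilde S)$ and $y\notin\mathsf{B}(\tilde S)$, we get $y\notin\mathsf{V}(\tilde S)$, and $x\notin\mathsf{B}(\tilde S)$, so head $\alpha$-conversion gives $\lambda xR\tilde S\equiv^\alpha\lambda yR\tilde S_{[y/x]}$. Contextual closure applied to $S\equiv_\alpha\tilde S$ inside the second slot of $\lambda xRS$ yields $\lambda xRS\equiv_\alpha\lambda xR\tilde S$, so transitivity delivers $\lambda xRS\equiv_\alpha\lambda yR\tilde S_{[y/x]}$.

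The main obstacle is the final step: to conclude, I need $\lambda yR\tilde S_{[y/x]}\equiv_\alpha\lambda yRS_{[y/x]}$, which by contextual closure reduces to a substitutivity lemma for $\alpha$-conversion, namely $\tilde S\equiv_\alpha S\Rightarrow\tilde S_{[y/x]}\equiv_\alpha S_{[y/x]}$. I would prove this by reducing via transitivity and context to a single head $\alpha$-conversion step and then performing a detailed case analysis on how $[y/x]$ interacts with the renamed binder (whether $x$ coincides with the binder, whether the capture-avoiding clause of the substitution definition triggers, and whether the freshly introduced variable must itself be renamed). This case-by-case verification is the technically most delicate part of the argument.
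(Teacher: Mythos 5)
The paper does not actually prove this proposition at all: its ``proof'' is a one-line citation to Hindley--Seldin (Lemma A1.8), so there is nothing internal to compare your argument against. Your outline is the standard textbook argument behind that citation and the overall strategy is sound: rename the bound variables of $S$ away from $\{x,y\}$ to obtain $\tilde S\equiv_\alpha S$ satisfying the stronger hypotheses $x\notin\mathsf{B}(\tilde S)$ and $y\notin\mathsf{V}(\tilde S)$ of head $\alpha$-conversion, apply the head step, and transport back. Two points deserve care. First, a small slip in the renaming lemma: after recursively renaming the subterms of $\lambda zR_1S_1$, the fresh replacement for $z$ must be chosen outside $\mathsf{V}(\tilde S_1)$ (the \emph{already renamed} body, which may contain new bound variables), not merely outside $\mathsf{V}(R_1S_1)$; otherwise the side condition $y\notin\mathsf{V}(S)$ in the definition of $\equiv^\alpha$ need not hold. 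Second, and more substantively, your final step invokes the substitutivity property $\tilde S\equiv_\alpha S\Rightarrow\tilde S_{[y/x]}\equiv_\alpha S_{[y/x]}$, which the paper also only states with a citation to the same appendix of Hindley--Seldin. In standard developments the proofs of that substitutivity lemma and of the present proposition are intertwined (the capture-avoiding clause of substitution can itself introduce a change of bound variable whose justification is exactly the proposition being proved), so you must either prove a restricted form of substitutivity first --- e.g.\ for substitutions $[y/x]$ where $y\notin\mathsf{V}(S)\cup\mathsf{V}(\tilde S)$, which is all your argument actually needs since $y\notin\mathsf{V}(\tilde S)$ and $y\notin\mathsf{F}(S)$ --- or run a simultaneous induction. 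As written, the deferral of ``the technically most delicate part'' is precisely where a circularity could hide, so that case analysis needs to be carried out, but the plan itself is correct.
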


\begin{proof}
    See \cite[Lemma A1.8]{HindleySeldin2008}.
\end{proof}

Later in \S\ref{Inferences}, we will also need other basic syntactic properties of $\alpha$-conversion, such as
\begin{align*}
    S\equiv_\alpha S'\text{ and }T\equiv_\alpha T'\quad&\Rightarrow\quad S_{[T/x]}\equiv_\alpha S'_{[T'/x]},\\
    x=w\text{ or }x\notin\mathsf{F}(R)\quad&\Rightarrow\quad R_{[S/w][T/x]}\equiv_\alpha R_{[S_{[T/x]}/w]}\quad\text{and}\\
    x\neq w\notin\mathsf{F}(T)\quad&\Rightarrow\quad R_{[S/w][T/x]}\equiv_\alpha R_{[T/x][S_{[T/x]}/w]}
\end{align*}
(again see \cite[Appendix A]{HindleySeldin2008}, for example).  Indeed, this is why we still need $\alpha$-conversion even in such a bare bones type system.

\section{Abbreviations}\label{Abbreviations}

It is convenient to define various abbreviations for terms to make them easier to describe.  For starters, let us adopt the more standard convention of replacing $\beta$'s by parentheses, i.e.~for any $t_0,\ldots,t_n\in\mathsf{T}$, let $\beta^n$ denote $\beta$ repeated $n$ times and define
\[(t_0t_1\ldots t_{n-1}t_n):=\beta^nt_0t_1\ldots t_{n-1}t_n.\]
We even omit the parentheses if no ambiguity arises.

Next let us define $\pi:=\rho\lambda$ so, for all $x\in\mathsf{V}$ and $R,S\in\mathsf{T}$,
\[\llbracket\pi xRS\rrbracket=\llbracket\rho\lambda xRS\rrbracket=\prod\llbracket\lambda xRS\rrbracket=\prod_{r\in\llbracket R\rrbracket}\llbracket\lambda xRS\rrbracket(r)=\prod_{r\in\llbracket R\rrbracket}\llbracket S\rrbracket_{\langle r,x\rangle}.\]
So $\llbracket\pi xRS\rrbracket$ is a set of functions on $\llbracket R\rrbracket$ and accordingly we will also sometimes use the more suggestive notation $(x:R)\rightarrow S$, i.e.
\[(x:R)\rightarrow S\ :=\ \pi xRS.\]
As usual, multiple arrows associate to the right by default, i.e.
\[(x:R)\rightarrow(y:S)\rightarrow T:=(x:R)\rightarrow((y:S)\rightarrow T)=\pi xR\pi yST.\]

When $x$ is not free in $S$, we can even omit it, i.e.
\[R\rightarrow S\ :=\ \pi xRS,\quad\text{where }x\notin\mathsf{F}(S).\]
As before, it does not really matter which $x\in\mathsf{V}\setminus\mathsf{F}(S)$ we choose, although by default we could again choose the smallest, for example.  By \Cref{FreeProp}, $R\rightarrow S$ is interpreted as the set of all functions $\phi:\llbracket R\rrbracket\rightarrow\llbracket S\rrbracket$, i.e.~such that $\mathrm{dom}(\phi)=\llbracket R\rrbracket$ and $\mathrm{ran}(\phi)\subseteq\llbracket S\rrbracket$, as
\[\llbracket R\rightarrow S\rrbracket=\llbracket\pi xRS\rrbracket\ =\prod_{r\in\llbracket R\rrbracket}\llbracket S\rrbracket_{\langle r,x\rangle}=\prod_{r\in\llbracket R\rrbracket}\llbracket S\rrbracket\ =\ \llbracket S\rrbracket^{\llbracket R\rrbracket}.\]

The `propositions as types' philosophy views any set as an intuitionistic/constructive `proposition' with its elements being the `proofs' of the proposition.  So from this viewpoint, $R\rightarrow S$ is interpreted as a function that takes proofs of $\llbracket R\rrbracket$ to proofs of $\llbracket S\rrbracket$, which is the constructive view of logical implication, as per the Brouwer–Heyting–Kolmogorov interpretation of intuitionistic logic.

This viewpoint also dictates that the empty set has no proofs and thus represents the unique false proposition, while any non-empty set represents a true proposition.  Accordingly, we can view any function as encoding a relation which holds precisely when the result of applying the function is non-empty.  In other words, for any sets $R$ and $S$, any function $F\in S^R$ yields a corresponding subset/unary relation on $R$ defined by $\underline F=\{r\in R:F(r)\neq\emptyset\}$.  For example, if $R=Q\times Q$, for some set $Q$, then $\underline F$ is a binary relation on $Q$ such that, for all $p,q\in Q$,
\[p\mathrel{\underline F}q\qquad\Leftrightarrow\qquad F(p,q)\neq\emptyset.\]

To encode relations as functions in this way, it is convenient to distinguish a constant $*\in\mathsf{C}$ to denote a default family of sets we wish to consider as propositions.  We then define $\bot$ as an abbreviation for the term $\pi\mathsf{v}{*}\mathsf{v}$, which will be interpreted as $\emptyset$ as long as $\emptyset\in\llbracket *\rrbracket$, as then
\begin{equation}\label{bot}
    \llbracket\bot\rrbracket=\llbracket\pi \mathsf{v}{*}\mathsf{v}\rrbracket=\prod_{s\in\llbracket*\rrbracket}\llbracket \mathsf{v}\rrbracket_{\langle s,\mathsf{v}\rangle}=\emptyset\times\prod_{s\in\llbracket*\rrbracket\setminus\{\emptyset\}}\llbracket \mathsf{v}\rrbracket_{\langle s,\mathsf{v}\rangle}=\emptyset.
\end{equation}
So in this case $\bot$ is interpreted as `false'.  We then define $\top:=\bot\rightarrow\bot$, which is interpreted as a true proposition as long as $\emptyset,\{\emptyset\}\in\llbracket *\rrbracket$, as
\[\llbracket\top\rrbracket=\llbracket\bot\rightarrow\bot\rrbracket=\llbracket\bot\rrbracket^{\llbracket\bot\rrbracket}=\emptyset^\emptyset=\{\emptyset\}.\]

For any term $S$, it thus makes sense to define its \emph{negation} $\neg S$ by
\[\neg S\ :=\ S\rightarrow\bot.\]
As desired, this has the effect of switching true and false, as
\begin{equation}\label{negation}
    \llbracket\neg S\rrbracket=\llbracket S\rightarrow\bot\rrbracket=\llbracket\bot\rrbracket^{\llbracket S\rrbracket}=\emptyset^{\llbracket S\rrbracket}=\begin{cases}\ \,\emptyset&\text{if }\llbracket S\rrbracket\neq\emptyset\\\{\emptyset\}&\text{if }\llbracket S\rrbracket=\emptyset.\end{cases}
\end{equation}
As long as $\emptyset,\{\emptyset\}\in\llbracket *\rrbracket$, this shows that negation also has the effect turning an arbitrary set into one of our default propositions.  Let us further define the \emph{truncation} of any $S\in\mathsf{T}$ as its double negation $\neg\neg S$.  From the `propositions as types' perspective, truncation has the effect of identifying all proofs of the proposition in question.  Indeed,
\begin{equation}\label{truncation}
    \llbracket\neg\neg S\rrbracket=\begin{cases}\ \,\emptyset&\text{if }\llbracket\neg S\rrbracket\neq\emptyset\\\{\emptyset\}&\text{if }\llbracket\neg S\rrbracket=\emptyset\end{cases}=\begin{cases}\ \,\emptyset&\text{if }\llbracket S\rrbracket=\emptyset\\\{\emptyset\}&\text{if }\llbracket S\rrbracket\neq\emptyset.\end{cases}
\end{equation}

We can also define other logical operations that are consistent with the `propositions as types' viewpoint.  For example, we define the `and' operation $\wedge$ by
\[R\wedge S\ :=\ \neg(R\rightarrow\neg S).\]
Then $R\wedge S$ is `true' precisely when $R$ and $S$ are, as 
\[\llbracket R\wedge S\rrbracket\neq\emptyset\quad\Leftrightarrow\quad\llbracket R\rightarrow\neg S\rrbracket=\emptyset\quad\Leftrightarrow\quad\llbracket R\rrbracket\neq\emptyset\neq\llbracket S\rrbracket.\]
From the `propositions as types' perspective, we also see that $\pi$ acts like the universal quantifier.  Indeed, for all $x\in\mathsf{V}$ and $R,S\in\mathsf{T}$, we see that $\pi xRS$ is `true' precisely when $S$ is, for all values of $\llbracket R\rrbracket$, as
\[\llbracket\pi xRS\rrbracket\neq\emptyset\quad\Leftrightarrow\ \,\prod_{r\in\llbracket R\rrbracket}\llbracket S\rrbracket_{\langle r,x\rangle}\neq\emptyset\quad\Leftrightarrow\quad\llbracket S\rrbracket_{\langle r,x\rangle}\neq\emptyset,\text{ for all }r\in\llbracket R\rrbracket.\]
The only issue is that $\pi xRS$ may not be a default proposition, even when $S$ is, i.e.~we can have $\llbracket\pi xRS\rrbracket\notin\llbracket *\rrbracket\ni\llbracket S\rrbracket_{\langle r,x\rangle}$, for all $r\in\llbracket R\rrbracket$.  To deal with this, we can define $\forall$ as the truncation of $\pi$, i.e. $\forall:=\neg\neg\pi$, so that $\llbracket\forall xRS\rrbracket\in\llbracket *\rrbracket$, as long as $\emptyset,\{\emptyset\}\in\llbracket *\rrbracket$, and still
\[\llbracket\forall xRS\rrbracket\neq\emptyset\quad\Leftrightarrow\quad\llbracket S\rrbracket_{\langle r,x\rangle}\neq\emptyset,\text{ for all }r\in\llbracket R\rrbracket.\]

\begin{remark}\label{ImpredicativeRemark}
    Many logical systems based on dependent types take $\forall$ to be exactly the same as $\pi$ and introduce an inference rule to the effect that $\pi$-terms of propositions remain propositions, i.e.~of type $*$.  From a syntactic point of view, this `impredicativity' of $*$ might seem convenient, but unfortunately it is difficult if not impossible to justify semantically in any satisfactory way.  To show that such an impredicative $*$ is semantically sound in \cite{MiquelWerner2003} and \cite{Carneiro2019}, for example, they first have to split the language into propositional and non-propositional parts.  This breaks the key selling point of `propositions as types', namely that types and propositions can be treated in a uniform way.  These papers also rely on more complicated interpretations that depend not just on an assignment of sets to the variables and constants, but also on a given set of assumptions $\Gamma$ that can change the way terms are interpreted.  We feel it is better to keep the language unified and the semantics simple and just use truncation to simulate impredicativity by defining $\forall$ as $\neg\neg\pi$.
\end{remark}

Alternatively, logical operators (including $\forall$) can be defined as constant symbols satisfying appropriate specifications.  This is more in the spirit of our second logical system and is discussed below in \S\ref{Specifications}.

\section{Consequences}\label{Consequences}

Any interpretation $\llbracket\cdot\rrbracket$ yields a \emph{typing relation} $\in_{\llbracket\cdot\rrbracket}$ on $\mathsf{T}$ defined by
\[{\in_{\llbracket\cdot\rrbracket}}=\{\langle S,P\rangle\mathrel{|}S,P\in\mathsf{T}\text{ and }\llbracket S\rrbracket\in\llbracket P\rrbracket\}.\]
Alternatively, we can see $\in_{\llbracket\cdot\rrbracket}$ as a unary relation on \emph{$($typing$)$ statements} $\mathsf{S}$, which are formally ordered pairs of $\equiv_\alpha$-equivalence classes, i.e.
\[\tag{Statements}\mathsf{S}=[\mathsf{T}]_\alpha\times[\mathsf{T}]_{\alpha}.\]
As usual, for any $S,P\in\mathsf{T}$, we denote the corresponding statement by
\[(S:P)=\langle[S]_\alpha,[P]_\alpha\rangle\]
or just $S:P$ when convenient, reading this as `$S$ is of type $P$'.  Here $S$ is the \emph{subject} of the statement, while $P$ is the \emph{predicate} of the statement.  We invoke \Cref{AlphaEqual2} to view $\in_{\llbracket\cdot\rrbracket}$ as a unary relation on statements, which we again denote by $\llbracket\cdot\rrbracket$, i.e.~for all $S,P\in\mathsf{T}$,
\[\llbracket S:P\rrbracket\qquad\Leftrightarrow\qquad S\mathrel{\in_{\llbracket\cdot\rrbracket}}P\qquad\Leftrightarrow\qquad\llbracket S\rrbracket\in\llbracket P\rrbracket.\]
In this case we say $(S:P)$ is \emph{satisfied} by the given interpretation $\llbracket\cdot\rrbracket$.  We further extend this to sets of statements $\Gamma\subseteq\mathsf{S}$ by defining
\[\llbracket\Gamma\rrbracket\quad\Leftrightarrow\quad\llbracket S\rrbracket\in\llbracket P\rrbracket\text{ whenever }(S:P)\in\Gamma.\]
When $\llbracket\Gamma\rrbracket$ holds, we say that the interpretation $\llbracket\cdot\rrbracket$ is a \emph{model} for $\Gamma$.

\begin{definition}
The \emph{consequence relation} ${\vDash}\subseteq\mathcal{P}(\mathsf{S})\times\mathsf{S}$ is defined by
\[\Gamma\vDash(S:P)\quad\Leftrightarrow\quad\llbracket\Gamma\rrbracket\Rightarrow\llbracket S:P\rrbracket,\text{ for every interpretation }\llbracket\cdot\rrbracket.\]    
\end{definition}

So $\Gamma\vDash(S:P)$ means every model of $\Gamma$ is a model of $(S:P)$.  We could already view $\vDash$ as a kind of `semantic foundation' of mathematics, in the sense that any mathematical problem can be reduced to proving a particular instance of $\vDash$.  But of course this may not make the problem any easier, because as yet we have no obvious way of checking $\vDash$.  What we really want is some kind of syntactic equivalent of $\vDash$, or at least a good enough syntactic approximation of $\vDash$, obeying rules that can be checked mechanically by a computer, for example.  To find such an approximation, the idea is to investigate properties of $\vDash$ that we can later take as inference rules defining a syntactic `inference' relation ${\vdash}\subseteq{\vDash}$.  This is the goal of the present section.

The first basic properties to look at are those common to all classical logical systems.  First let us extend any ${\Vdash}\subseteq\mathcal{P}(\mathsf{S})\times\mathsf{S}$ to a binary relation on $\mathcal{P}(\mathsf{S})$ so that, for all $\Gamma,\Delta\subseteq\mathsf{S}$ and $S,P\in\mathsf{T}$,
\[\Gamma\Vdash\Delta\qquad\Leftrightarrow\qquad\Gamma\Vdash(S:P),\text{ for all }(S:P)\in\Delta.\]
We call ${\Vdash}$ a \emph{sequent} if this extension defines a preorder on $\mathcal{P}(\mathsf{S})$.  More explicitly this means that, for all $\Gamma,\Delta\subseteq\mathsf{S}$ and $S,P\in\mathsf{T}$,
\[\tag{Sequent}(S:P)\in\Gamma\text{ or }\Gamma\Vdash\Delta\Vdash(S:P)\quad\Rightarrow\quad\Gamma\Vdash (S:P).\]

\begin{proposition}\label{ConSeq}
    The consequence relation $\vDash$ is a sequent.
\end{proposition}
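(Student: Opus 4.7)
The claim amounts to verifying the two sequent conditions directly from the definitions of $\vDash$, $\llbracket\Gamma\rrbracket$, and the induced extension to $\mathcal{P}(\mathsf{S})\times\mathcal{P}(\mathsf{S})$. The plan is to unpack the extended relation $\vDash$ on $\mathcal{P}(\mathsf{S})\times\mathcal{P}(\mathsf{S})$ and then check reflexivity and transitivity on the nose.

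First I would handle the ``identity'' case. Suppose $(S:P)\in\Gamma$, and fix any interpretation $\llbracket\cdot\rrbracket$ with $\llbracket\Gamma\rrbracket$. By the definition of $\llbracket\Gamma\rrbracket$, we have $\llbracket S'\rrbracket\in\llbracket P'\rrbracket$ for every $(S':P')\in\Gamma$; instantiating at $(S:P)$ gives $\llbracket S:P\rrbracket$. As the interpretation was arbitrary, $\Gamma\vDash(S:P)$.

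Next I would handle the ``cut'' case. Assume $\Gamma\vDash\Delta$ and $\Delta\vDash(S:P)$, where by the extension the first means $\Gamma\vDash(S':P')$ for every $(S':P')\in\Delta$. Fix any interpretation $\llbracket\cdot\rrbracket$ with $\llbracket\Gamma\rrbracket$. Then for each $(S':P')\in\Delta$ the hypothesis $\Gamma\vDash(S':P')$ yields $\llbracket S'\rrbracket\in\llbracket P'\rrbracket$, so $\llbracket\Delta\rrbracket$ holds. Applying $\Delta\vDash(S:P)$ to this same interpretation gives $\llbracket S:P\rrbracket$, and again the interpretation was arbitrary, so $\Gamma\vDash(S:P)$.

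Together these two facts verify the displayed sequent condition for $\vDash$. There is no real obstacle here: the whole argument is a direct transcription of ``every model of the hypotheses is a model of the conclusion'' through the definitions, and no properties of the term language, of $\alpha$-conversion, or of the specific constructors $\rho,\beta,\lambda$ are needed. The only mildly delicate point is being careful that the extension $\Gamma\vDash\Delta$ is interpreted pointwise over $\Delta$ rather than as a single joint statement, which is why the cut step chains through the intermediate assertion $\llbracket\Delta\rrbracket$ rather than some stronger universally-quantified clause.
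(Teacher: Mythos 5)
Your proof is correct and follows essentially the same route as the paper's: both verify the identity case ($(S:P)\in\Gamma$ implies $\Gamma\vDash(S:P)$) and the cut case ($\Gamma\vDash\Delta\vDash(S:P)$ implies $\Gamma\vDash(S:P)$) by unwinding the definitions of $\llbracket\Gamma\rrbracket$ and $\vDash$ over an arbitrary interpretation. Your version is merely more explicit about chaining through $\llbracket\Delta\rrbracket$, which the paper does in a single sentence.
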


\begin{proof}
    If $(S:P)\in\Gamma$ and $\llbracket\Gamma\rrbracket$ holds then, in particular, $\llbracket S:P\rrbracket$ holds, showing that $\Gamma\vDash(S:P)$.  Likewise, if $\Gamma\Vdash\Delta\Vdash(S:P)$ and $\llbracket\Gamma\rrbracket$ holds then so does $\llbracket\Delta\rrbracket$ and hence $\llbracket S:P\rrbracket$, showing that $\Gamma\vDash(S:P)$.
\end{proof}

Let us call a relation ${\Vdash}\subseteq\mathcal{P}(\mathsf{S})\times\mathsf{S}$ \emph{monotone} if ${\supseteq}\circ{\Vdash}\subseteq{\Vdash}$, i.e.~if, for all $\Gamma,\Delta\subseteq\mathsf{S}$ and $S,P\in\mathsf{T}$,
\[\tag{Monotone}\Gamma\supseteq\Delta\Vdash(S:P)\quad\Rightarrow\quad\Gamma\Vdash(S:P).\]
We call ${\Vdash}\subseteq\mathcal{P}(\mathsf{S})\times\mathsf{S}$ \emph{reflexive} if, for all $S,P\in\mathsf{T}$,
\[\tag{Reflexive}(S:P)\Vdash(S:P).\]

\begin{proposition}\label{SequentMonoRefl}
    Every sequent ${\Vdash}$ is monotone and reflexive.
\end{proposition}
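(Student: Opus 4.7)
The plan is to derive both properties directly from the two disjuncts in the sequent condition, using the fact that the sequent condition is stated in terms of the extended relation $\Vdash$ on $\mathcal{P}(\mathsf{S})$.

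For reflexivity, I would take any $(S:P) \in \mathsf{S}$ and apply the sequent condition to the singleton set $\Gamma = \{(S:P)\}$. Since $(S:P) \in \Gamma$, the first disjunct of the sequent property gives $\{(S:P)\} \Vdash (S:P)$, which is exactly reflexivity. This step is immediate.

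For monotonicity, suppose $\Gamma \supseteq \Delta \Vdash (S:P)$. I would first show, as an intermediate step, that $\Gamma \Vdash \Delta$. By definition of the extended relation, this amounts to showing $\Gamma \Vdash (S':P')$ for every $(S':P') \in \Delta$. But since $\Delta \subseteq \Gamma$, each such $(S':P')$ lies in $\Gamma$, and the first disjunct of the sequent condition immediately yields $\Gamma \Vdash (S':P')$. Thus $\Gamma \Vdash \Delta$, and combined with the hypothesis $\Delta \Vdash (S:P)$ we have the chain $\Gamma \Vdash \Delta \Vdash (S:P)$. The second disjunct of the sequent condition then produces $\Gamma \Vdash (S:P)$, as desired.

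There is no real obstacle here; the only mild subtlety is keeping the two meanings of $\Vdash$ (as a subset of $\mathcal{P}(\mathsf{S})\times\mathsf{S}$ versus its extension to $\mathcal{P}(\mathsf{S})\times\mathcal{P}(\mathsf{S})$) straight, and noting that the sequent condition is already phrased using the extended relation in its hypothesis $\Gamma \Vdash \Delta \Vdash (S:P)$. Both properties thus fall out as special instances of the two disjuncts.
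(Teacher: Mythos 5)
Your proof is correct and follows essentially the same route as the paper: reflexivity from the first disjunct applied to the singleton $\{(S:P)\}$, and monotonicity by noting $\Delta\subseteq\Gamma$ gives $\Gamma\Vdash\Delta$ (again via the first disjunct) and then chaining with the second disjunct. The paper's version is just more terse, leaving the intermediate step $\Gamma\Vdash\Delta$ implicit.
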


\begin{proof}
    Note $(S:P)\in\{(S:P)\}$ and hence $(S:P)\Vdash(S:P)$, proving reflexivity.  Also if $\Gamma\supseteq\Delta\Vdash(S:P)$ then $\Gamma\Vdash\Delta\Vdash(S:P)$ and hence $\Gamma\Vdash(S:P)$, proving monotonicity.
\end{proof}

The following is now immediate from \Cref{ConSeq,SequentMonoRefl}.

\begin{corollary}\label{ConMonoRefl}
    The consequence relation $\vDash$ is monotone and reflexive.
\end{corollary}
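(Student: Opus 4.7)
The plan is simply to chain the two preceding propositions. First I would recall that \Cref{ConSeq} establishes that $\vDash$ is a sequent, i.e.\ that the extension of $\vDash$ to a relation on $\mathcal{P}(\mathsf{S})$ is a preorder. Second, \Cref{SequentMonoRefl} asserts that every sequent on $\mathsf{S}$ is both monotone and reflexive. Composing these two facts yields the corollary immediately, with no further computation required.

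In more detail, for reflexivity I would note that applying \Cref{SequentMonoRefl} to ${\Vdash}={\vDash}$ directly gives $(S:P)\vDash(S:P)$ for all $S,P\in\mathsf{T}$. For monotonicity, the same application yields that $\Gamma\supseteq\Delta\vDash(S:P)$ implies $\Gamma\vDash(S:P)$, which is exactly the monotonicity condition. Since both consequences come from a single application of \Cref{SequentMonoRefl} to the sequent $\vDash$ established in \Cref{ConSeq}, the proof is a one-line invocation.

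There is no real obstacle here; the only thing to double-check is that the extension of $\vDash$ from $\mathcal{P}(\mathsf{S})\times\mathsf{S}$ to $\mathcal{P}(\mathsf{S})\times\mathcal{P}(\mathsf{S})$ used in the definition of \emph{sequent} agrees with what is needed to invoke \Cref{SequentMonoRefl}, but this is exactly the convention fixed just before \Cref{ConSeq}. Thus the corollary follows without any additional argument beyond citing the two preceding results.
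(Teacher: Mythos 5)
Your proposal is correct and matches the paper exactly: the corollary is stated there as immediate from \Cref{ConSeq} (which shows $\vDash$ is a sequent) together with \Cref{SequentMonoRefl} (which shows every sequent is monotone and reflexive). No further argument is needed.
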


Next we have the following property of $\vDash$ relating to $\rho$-terms and $\beta$-terms.  For any $A,B\in\mathsf{S}$, let us abbreviate $\{A,B\}$ to $A,B$.

\begin{proposition}\label{AppProp}
    For any $F,P,R,S\in\mathsf{T}$ and $x\in\mathsf{V}$,
    \[(F:\pi xPR),(S:P)\vDash(FS:R_{[S/x]}).\]
\end{proposition}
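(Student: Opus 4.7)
The plan is essentially a direct unfolding of definitions followed by a single invocation of Substitutivity. Fix any interpretation $\llbracket\cdot\rrbracket$ for which $\llbracket F\rrbracket\in\llbracket\pi xPR\rrbracket$ and $\llbracket S\rrbracket\in\llbracket P\rrbracket$; the goal is to show $\llbracket\beta FS\rrbracket\in\llbracket R_{[S/x]}\rrbracket$.

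First I would unpack the first hypothesis. Since $\pi xPR$ abbreviates $\rho\lambda xPR$, the clauses for $\llbracket\rho\cdot\rrbracket$ and $\llbracket\lambda\cdot\cdot\cdot\rrbracket$ combined with the description of the dependent product given in the Preliminaries tell us that $\llbracket F\rrbracket$ is a function with domain $\llbracket P\rrbracket$ and that $\llbracket F\rrbracket(r)\in\llbracket R\rrbracket_{\langle r,x\rangle}$ for every $r\in\llbracket P\rrbracket$. Second, I would instantiate this at $r=\llbracket S\rrbracket$, which is legitimate precisely because the second hypothesis places $\llbracket S\rrbracket$ in $\llbracket P\rrbracket$. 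This yields $\llbracket F\rrbracket(\llbracket S\rrbracket)\in\llbracket R\rrbracket_{\langle\llbracket S\rrbracket,x\rangle}$.

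The final step is a double rewrite: on the left, the $\beta$-clause of the interpretation gives $\llbracket F\rrbracket(\llbracket S\rrbracket)=\llbracket\beta FS\rrbracket$; on the right, Substitutivity (\Cref{SubstitutivityProp}) gives $\llbracket R\rrbracket_{\langle\llbracket S\rrbracket,x\rangle}=\llbracket R_{[S/x]}\rrbracket$. Combining these two rewrites with the membership just derived produces $\llbracket\beta FS\rrbracket\in\llbracket R_{[S/x]}\rrbracket$, which is exactly $\llbracket FS:R_{[S/x]}\rrbracket$. Since the interpretation was arbitrary among models of the two premises, this establishes the consequence.

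I do not anticipate any real obstacle here. All the work has already been done in establishing Substitutivity; the present statement is essentially the semantic content of the standard application/$\beta$-reduction rule, read off directly from the interpretation clauses. The only mild care needed is the observation that $\llbracket S\rrbracket\in\llbracket P\rrbracket$ ensures $\llbracket S\rrbracket$ lies in the domain of $\llbracket F\rrbracket$, so that $\llbracket F\rrbracket(\llbracket S\rrbracket)$ is genuinely an element of $\llbracket R\rrbracket_{\langle\llbracket S\rrbracket,x\rangle}$ rather than being returned as $\emptyset$ by the default-application convention.
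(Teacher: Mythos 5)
Your proposal is correct and follows exactly the paper's own argument: unfold $\llbracket\pi xPR\rrbracket$ as the dependent product $\prod_{p\in\llbracket P\rrbracket}\llbracket R\rrbracket_{\langle p,x\rangle}$, evaluate $\llbracket F\rrbracket$ at $\llbracket S\rrbracket\in\llbracket P\rrbracket$, and rewrite both sides via the $\beta$-clause and \eqref{Substitutivity}. Your closing remark about $\llbracket S\rrbracket$ lying in the domain of $\llbracket F\rrbracket$ (so the default-application convention does not interfere) is a sensible extra check, but there is nothing here that differs from the paper's proof.
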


\begin{proof}
    Assume that $\llbracket\cdot\rrbracket$ is a model for $(F:\pi xPR)$ and $(S:P)$.  This means $\llbracket F\rrbracket\in\prod_{p\in\llbracket P\rrbracket}\llbracket R\rrbracket_{\langle p,x\rangle}$ and $\llbracket S\rrbracket\in\llbracket P\rrbracket$ and hence
    \[\llbracket FS\rrbracket=\llbracket F\rrbracket(\llbracket S\rrbracket)\in\llbracket R\rrbracket_{\langle\llbracket S\rrbracket,x\rangle}=\llbracket R_{[S/x]}\rrbracket,\]
    by \eqref{Substitutivity}.  This means $\llbracket FS:R_{[S/x]}\rrbracket$, as required.
\end{proof}

We denote the free variables in any set of statements $\Gamma\subseteq\mathsf{S}$ by
\[\mathsf{F}(\Gamma)=\bigcup_{(S:P)\in\Gamma}\mathsf{F}(S)\cup\mathsf{F}(P)\]
We also write $\Gamma,(x:Q)$ as an abbreviation for $\Gamma\cup\{x:Q\}$.

\begin{proposition}\label{AbProp}
    For $\Gamma\subseteq\mathsf{S}$, $P,Q,S\in\mathsf{T}$ and $x\in\mathsf{V}\setminus(\mathsf{F}(\Gamma)\cup\mathsf{F}(Q))$,
    \[\Gamma,(x:Q)\vDash(S:P)\quad\Rightarrow\quad\Gamma\vDash(\lambda xQS:\pi xQP).\]
\end{proposition}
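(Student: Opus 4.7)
The plan is a direct semantic argument: fix an arbitrary model of $\Gamma$, produce for each $r\in\llbracket Q\rrbracket$ a pointwise modified model of $\Gamma\cup\{x:Q\}$, and then read off that the graph of $r\mapsto\llbracket S\rrbracket_{\langle r,x\rangle}$ lies in the dependent product interpreting $\pi xQP$.

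First I would fix an interpretation $\llbracket\cdot\rrbracket$ with $\llbracket\Gamma\rrbracket$. To show $\llbracket\lambda xQS:\pi xQP\rrbracket$, I unpack both sides using the defining equations for $\lambda$- and $\rho$-terms to reduce the goal to
\[\llbracket S\rrbracket_{\langle r,x\rangle}\in\llbracket P\rrbracket_{\langle r,x\rangle}\quad\text{for every }r\in\llbracket Q\rrbracket,\]
so that $\{\langle\llbracket S\rrbracket_{\langle r,x\rangle},r\rangle:r\in\llbracket Q\rrbracket\}$ is indeed a function in $\prod_{r\in\llbracket Q\rrbracket}\llbracket P\rrbracket_{\langle r,x\rangle}$.

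The key step is to fix $r\in\llbracket Q\rrbracket$ and consider the modified interpretation $\llbracket\cdot\rrbracket_{\langle r,x\rangle}$. Since $x\notin\mathsf{F}(Q)$, Proposition~\ref{FreeProp} gives $\llbracket Q\rrbracket_{\langle r,x\rangle}=\llbracket Q\rrbracket$, and by definition $\llbracket x\rrbracket_{\langle r,x\rangle}=r$, so $r\in\llbracket Q\rrbracket$ translates to $\llbracket x:Q\rrbracket$ in the modified interpretation. Similarly, since $x\notin\mathsf{F}(\Gamma)$, applying Proposition~\ref{FreeProp} to each subject and predicate in $\Gamma$ shows that $\llbracket\cdot\rrbracket_{\langle r,x\rangle}$ still satisfies every statement in $\Gamma$. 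Hence $\llbracket\cdot\rrbracket_{\langle r,x\rangle}$ is a model of $\Gamma\cup\{x:Q\}$, and the hypothesis $\Gamma,(x:Q)\vDash(S:P)$ yields $\llbracket S\rrbracket_{\langle r,x\rangle}\in\llbracket P\rrbracket_{\langle r,x\rangle}$, which is exactly what was needed.

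There is no real obstacle here beyond bookkeeping; the only subtlety is to be careful that the side conditions $x\notin\mathsf{F}(\Gamma)$ and $x\notin\mathsf{F}(Q)$ are exactly what makes Proposition~\ref{FreeProp} applicable uniformly across $\Gamma$ and to $Q$, ensuring that replacing the value at $x$ preserves the truth of the premises while correctly introducing the new assumption $(x:Q)$.
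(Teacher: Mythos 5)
Your proposal is correct and follows essentially the same argument as the paper's proof: fix a model of $\Gamma$, pass to the modified interpretation $\llbracket\cdot\rrbracket_{\langle r,x\rangle}$ for each $r\in\llbracket Q\rrbracket$, use the freeness conditions (via \Cref{FreeProp}) to see it models $\Gamma,(x:Q)$, and read off that the graph of $r\mapsto\llbracket S\rrbracket_{\langle r,x\rangle}$ lies in $\prod_{r\in\llbracket Q\rrbracket}\llbracket P\rrbracket_{\langle r,x\rangle}=\llbracket\pi xQP\rrbracket$. No differences worth noting.
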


\begin{proof}
    Assume $\Gamma,(x:Q)\vDash(S:P)$ and take a model $\llbracket\cdot\rrbracket$ for $\Gamma$.  If $q\in\llbracket Q\rrbracket=\llbracket Q\rrbracket_{\langle q,x\rangle}$, as $x\notin\mathsf{F}(Q)$, then $\llbracket x:Q\rrbracket_{\langle q,x\rangle}$ and $\llbracket\Gamma\rrbracket_{\langle q,x\rangle}$, as $x\notin\mathsf{F}(\Gamma)$.  Thus $\llbracket S:P\rrbracket_{\langle q,x\rangle}$, i.e.~$\llbracket S\rrbracket_{\langle q,x\rangle}\in\llbracket P\rrbracket_{\langle q,x\rangle}$ and hence
    \[\llbracket\lambda xQS\rrbracket=\{\langle\llbracket S\rrbracket_{\langle q,x\rangle},q\rangle:q\in\llbracket Q\rrbracket\}\in\prod_{q\in\llbracket Q\rrbracket}\llbracket P\rrbracket_{\langle q,x\rangle}=\llbracket\pi xQP\rrbracket.\]
    This shows that $\llbracket\lambda xQS:\pi xQP\rrbracket$ and hence $\Gamma\vDash(\lambda xQS:\pi xQP)$.
\end{proof}

\section{Inferences}\label{Inferences}

The classic system of predicate logic has just two inference rules, modus ponens and universal generalisation (as in \cite[1.3.8 and 1.3.9]{ChangKeisler1990}, for example).  Viewing propositions as types, we see that these rules are analogous to the properties of $\vDash$ proved in \Cref{AppProp,AbProp}.  Indeed, when $x\notin\mathsf{F}(R)$, \Cref{AppProp} becomes
\[(F:P\rightarrow R),(S:P)\vDash(FS:R)\]
which turns into modus ponens once the subjects are erased.  Likewise, erasing the subjects in \Cref{AbProp} and replacing $\pi$ with $\forall$ yields a rule of predicate logic that follows from universal generalisation.

Accordingly, we take \Cref{AppProp,AbProp} as the sole inference rules defining our \emph{inference relation} ${\vdash}\subseteq\mathcal{P}(\mathsf{S})\times\mathsf{S}$.

\begin{definition}\label{vdashDef}
    Let ${\vdash}$ be the smallest sequent such that, for all $\Gamma\subseteq\mathsf{S}$, $F,P,Q,R,S\in\mathsf{T}$ and $x\in\mathsf{V}\setminus(\mathsf{F}(\Gamma)\cup\mathsf{F}(Q))$,
\begin{align}
    \label{App'}\tag{App$'$}(F:\pi xPR),(S:P)&\vdash(FS:R_{[S/x]})\quad\text{and}\\
    \label{Ab'}\tag{Ab}\Gamma,(x:Q)\vdash(S:P)\quad\Rightarrow\quad\Gamma&\vdash(\lambda xQS:\pi xQP).
\end{align}
\end{definition}

The following is now immediate from \Cref{ConSeq,AppProp,,AbProp}.

\begin{proposition}
    The inference relation is sound, i.e.~${\vdash}\subseteq{\vDash}$.
\end{proposition}

Many type systems in the literature restrict the left side of $\vdash$ to special finite sequences of statements known as `legal contexts'.  We make no such restriction here, although any particular instance of $\vdash$ is already determined by a finite subset of assumptions on the left, i.e.
\[\Gamma\vdash(S:P)\qquad\Rightarrow\qquad\exists\text{ finite }\Phi\subseteq\Gamma\ (\Phi\vdash(S:P)),\]
for any $\Gamma\subseteq\mathsf{S}$ and $S,P\in\mathsf{T}$.  To prove this and various other properties of the inference relation, it will be convenient to stratify $\vdash$ based on the number of times the inference rules need to be applied.

Accordingly, let ${\vdash_0}$ be the smallest monotone reflexive relation, i.e.
\[\Gamma\vdash_0(S:P)\qquad\Leftrightarrow\qquad(S:P)\in\Gamma.\]
Given $\vdash_n$, let $\vdash_{n+1}$ be the smallest monotone reflexive relation such that, for all $\Gamma\subseteq\mathsf{S}$, $F,P,Q,R,S\in\mathsf{T}$, $x\in\mathsf{V}$ and $y\in\mathsf{V}\setminus(\mathsf{F}(\Gamma)\cup\mathsf{F}(Q))$,
\begin{align}
    \label{Appn}\tag{App$_n$}\Gamma\vdash_n(F:\pi xPR),(S:P)\quad&\Rightarrow\quad\Gamma\vdash_{n+1}(FS:R_{[S/x]})\quad\text{and}\\
    \label{Abn}\tag{Ab$_n$}\Gamma,(y:Q)\vdash_n(S:P)\quad&\Rightarrow\quad\Gamma\vdash_{n+1}(\lambda yQS:\pi yQP).
\end{align}

Let us also define corresponding \emph{typing functions} $\lceil\cdot\rceil^\Gamma_n\in\mathcal{P}(\mathsf{T})^\mathsf{T}$, for all $\Gamma\subseteq\mathcal{P}(\mathsf{S})$ and $n\in\omega$, as follows.
First, for any $S\in\mathsf{T}$, let
\[\lceil S\rceil^\Gamma_0=\Gamma^{-1}([S]_\alpha)=\{P\mathrel{|}(S:P)\in\Gamma\}.\]
Once $\lceil\cdot\rceil^\Gamma_n$ has been defined, we define $\lceil\cdot\rceil^\Gamma_{n+1}$ by
\begin{align*}
    \hspace{-30pt}\lceil t\rceil^\Gamma_{n+1}&=\lceil t\rceil^\Gamma_0,\text{ if }t\in\mathsf{T}^0\cup\rho\mathsf{T},\\
    \hspace{-30pt}\lceil FS\rceil^\Gamma_{n+1}&=\lceil FS\rceil^\Gamma_0\cup\bigcup\{[R_{[S/x]}]_\alpha\mathrel{|}\exists P\in\lceil S\rceil^\Gamma_n\ (\pi xPR\in\lceil F\rceil^\Gamma_n)\}\text{ and}\\
    \hspace{-30pt}\lceil\lambda xQS\rceil^\Gamma_{n+1}&=\lceil\lambda xQS\rceil^\Gamma_0\cup\bigcup\{[\pi xQP]_\alpha\mathrel{|}\exists\Delta\subseteq\Gamma\ (x\notin\mathsf{F}(\Delta)\cup\mathsf{F}(Q)\text{ and }P\in\lceil S\rceil^{\Delta,(x:Q)}_n)\}.
\end{align*}

First we observe $\lceil\cdot\rceil^\Gamma_n$ is determined by its restriction to finite $\Gamma$.

\begin{proposition}\label{FiniteInference}
    For any $n\in\omega$ and $\Gamma\subseteq\mathsf{S}$,
    \[\lceil S\rceil^\Gamma_n=\bigcup\{\lceil S\rceil^\Phi_n\mathrel{|}\Phi\subseteq\Gamma\text{ is finite}\}.\]    
\end{proposition}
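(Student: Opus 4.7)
The plan is a straightforward induction on $n$, with a parallel induction establishing the monotonicity fact $\Phi\subseteq\Gamma\Rightarrow\lceil S\rceil^\Phi_n\subseteq\lceil S\rceil^\Gamma_n$. This monotonicity is immediate from the recursive definition: at stage $0$ it is set-theoretic monotonicity of the preimage, and at stage $n{+}1$ each clause in the definition of $\lceil\cdot\rceil^\Gamma_{n+1}$ is manifestly monotone in $\Gamma$ (the $\lambda$-clause quantifies over $\Delta\subseteq\Gamma$, which only expands when $\Gamma$ grows). The monotonicity gives the $\supseteq$ direction of the statement automatically.

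For the base case $n=0$, if $P\in\lceil S\rceil^\Gamma_0$ then $(S:P)\in\Gamma$, so the finite set $\Phi=\{(S:P)\}\subseteq\Gamma$ witnesses $P\in\lceil S\rceil^\Phi_0$.

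For the inductive step, I would assume the statement (together with monotonicity) at level $n$ and split by the shape of $S$, using the definition of $\lceil\cdot\rceil^\Gamma_{n+1}$. For atomic terms and $\rho$-terms, $\lceil S\rceil^\Gamma_{n+1}=\lceil S\rceil^\Gamma_0$, so the base case applies verbatim. For $FS$, an element of $\lceil FS\rceil^\Gamma_{n+1}$ either lies in $\lceil FS\rceil^\Gamma_0$ (base case) or lies in some $[R_{[S/x]}]_\alpha$ witnessed by $P\in\lceil S\rceil^\Gamma_n$ and $\pi xPR\in\lceil F\rceil^\Gamma_n$. The inductive hypothesis supplies finite sets $\Phi_1,\Phi_2\subseteq\Gamma$ witnessing the two memberships; then $\Phi:=\Phi_1\cup\Phi_2$ is finite, is contained in $\Gamma$, and by monotonicity at level $n$ still witnesses both memberships, so $[R_{[S/x]}]_\alpha\subseteq\lceil FS\rceil^\Phi_{n+1}$.

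The only clause that needs a touch of care, and which I expect to be the main obstacle, is the $\lambda$-clause, because the freshness condition $x\notin\mathsf{F}(\Delta)\cup\mathsf{F}(Q)$ must be preserved when we pass to a finite subset. Suppose $[\pi xQP]_\alpha\subseteq\lceil\lambda xQS\rceil^\Gamma_{n+1}$ is witnessed by $\Delta\subseteq\Gamma$ with $x\notin\mathsf{F}(\Delta)\cup\mathsf{F}(Q)$ and $P\in\lceil S\rceil^{\Delta,(x:Q)}_n$. Applying the inductive hypothesis to the set $\Delta\cup\{(x:Q)\}$ produces a finite $\Psi\subseteq\Delta\cup\{(x:Q)\}$ with $P\in\lceil S\rceil^\Psi_n$. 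Setting $\Phi:=\Psi\setminus\{(x:Q)\}$ yields a finite subset of $\Delta\subseteq\Gamma$ which inherits $x\notin\mathsf{F}(\Phi)\cup\mathsf{F}(Q)$ from $\Delta$, and since $\Psi\subseteq\Phi\cup\{(x:Q)\}$, monotonicity at level $n$ upgrades $P$ to a member of $\lceil S\rceil^{\Phi,(x:Q)}_n$. Hence $[\pi xQP]_\alpha\subseteq\lceil\lambda xQS\rceil^\Phi_{n+1}$, completing the induction.
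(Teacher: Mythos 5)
Your proposal is correct and follows essentially the same two-step strategy as the paper: first prove monotonicity of $\lceil\cdot\rceil^\Gamma_n$ in $\Gamma$ (giving the easy inclusion), then prove the compactness direction by induction on $n$, taking unions of finite witnesses and invoking monotonicity. The paper dismisses the $\lambda$-clause with ``proved similarly,'' whereas you correctly work out the one genuinely delicate point there, namely that removing $(x:Q)$ from the finite witness preserves the freshness condition $x\notin\mathsf{F}(\Phi)\cup\mathsf{F}(Q)$.
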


\begin{proof}
    First we claim that, for all $n\in\omega$, $\Gamma,\Delta\subseteq\mathsf{S}$ and $S\in\mathsf{T}$,
    \begin{equation}\label{TypeMonotonicity}
        \Delta\subseteq\Gamma\qquad\Rightarrow\qquad\lceil S\rceil^\Delta_n\subseteq\lceil S\rceil^\Gamma_n.
    \end{equation}
    Indeed, if $\Delta\subseteq\Gamma$ then $\lceil S\rceil^\Delta_0=\Delta^{-1}([S]_\alpha)\subseteq\Gamma^{-1}([S]_\alpha)=\lceil S\rceil^\Gamma_0$.  Also $\Delta'\subseteq\Delta$ then implies $\Delta'\subseteq\Gamma$ and hence $\lceil\lambda xQS\rceil^\Delta_n\subseteq\lceil\lambda xQS\rceil^\Gamma_n$, for all $\Delta'\subseteq\mathsf{S}$, $x\in\mathsf{V}$, $Q,S\in\mathsf{T}$ and $n\in\omega$.  Assuming \eqref{TypeMonotonicity} holds for $n$, it also then follows that $P\in\lceil S\rceil^\Delta_n$ and $\pi xPR\in\lceil F\rceil^\Delta_n$ implies $P\in\lceil S\rceil^\Gamma_n$ and $\pi xPR\in\lceil F\rceil^\Gamma_n$ and hence $\lceil FS\rceil^\Delta_{n+1}\subseteq\lceil FS\rceil^\Gamma_{n+1}$, for all $F,S\in\mathsf{T}$.  This proves \eqref{TypeMonotonicity} for $n+1$ and hence all $n\in\omega$, by induction.  In particular,
    \[\bigcup\{\lceil S\rceil^\Phi_n\mathrel{|}\Phi\subseteq\Gamma\text{ is finite}\}\subseteq\lceil S\rceil^\Gamma_n.\]

    For the reverse inclusion, we must show that
    \begin{equation}\label{TypeCompactness}
        P\in\lceil S\rceil^\Gamma_n\qquad\Rightarrow\qquad\exists\text{ finite }\Phi\subseteq\Gamma\ (P\in\lceil S\rceil^\Phi_n).
    \end{equation}
    By definition, $P\in\lceil S\rceil^\Gamma_0$ implies $(S:P)\in\Gamma$ and $P\in\lceil S\rceil^{(S:P)}_0$, proving \eqref{TypeCompactness} for $n=0$.  Now assume \eqref{TypeCompactness} for $n$ and take $P\in\lceil S\rceil^\Gamma_{n+1}$.  If $(S:P)\in\Gamma$ then again $P\in\lceil S\rceil^{(S:P)}_0\subseteq\lceil S\rceil^{(S:P)}_{n+1}$.  Otherwise $S$ is a $\beta$-term or a $\lambda$-term.  In the former case, we have $F,P',R,S'\in\mathsf{T}$ such that $S=FS'$, $P=R_{[S'/x]}$, $P'\in\lceil S'\rceil^\Gamma_n$ and $\pi xP'R\in\lceil F\rceil^{\Gamma}_n$.  Then we have finite $\Sigma,\Delta\subseteq\Gamma$ with $P'\in\lceil S'\rceil^\Sigma_n$ and $\pi xP'R\in\lceil F\rceil^\Delta_n$.  But then $\Phi=\Sigma\cup\Delta$ is also finite with $P'\in\lceil S'\rceil^\Phi_n$ and $\pi xP'R\in\lceil F\rceil^\Phi_n$, by \eqref{TypeMonotonicity}, and hence $P\in\lceil S\rceil^\Phi_{n+1}$.  The latter case is proved similarly, thus proving the result for $n+1$ and hence all $n\in\omega$, by induction.
\end{proof}

Next we observe that the typing functions do indeed characterise the corresponding inference relations.

\begin{proposition}\label{nTyping}
    For all $n\in\omega$, $\Gamma\subseteq\mathsf{S}$ and $S,P\in\mathsf{T}$,
    \[\Gamma\vdash_n(S:P)\qquad\Leftrightarrow\qquad P\in\lceil S\rceil^\Gamma_n.\]
\end{proposition}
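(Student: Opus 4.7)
The plan is to induct on $n \in \omega$. For the base case $n = 0$, the two sides agree tautologically: $\vdash_0$ is the smallest monotone reflexive relation, so $\Gamma \vdash_0 (S:P) \Leftrightarrow (S:P) \in \Gamma$, while $\lceil S \rceil^\Gamma_0 = \Gamma^{-1}([S]_\alpha) = \{P : (S:P) \in \Gamma\}$.

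For the inductive step, I would treat the two directions separately. For $\Leftarrow$, assume $P \in \lceil S \rceil^\Gamma_{n+1}$ and split on the three clauses of the recursive definition. If $P \in \lceil S \rceil^\Gamma_0$, then $(S:P) \in \Gamma$ and the reflexivity and monotonicity of $\vdash_{n+1}$ give $\Gamma \vdash_{n+1} (S:P)$. If $S = FS'$ is a $\beta$-term with witnesses $P' \in \lceil S' \rceil^\Gamma_n$ and $\pi xP'R \in \lceil F \rceil^\Gamma_n$ and $[P]_\alpha = [R_{[S'/x]}]_\alpha$, the inductive hypothesis yields $\Gamma \vdash_n (S':P')$ and $\Gamma \vdash_n (F : \pi xP'R)$, and then \eqref{Appn} concludes $\Gamma \vdash_{n+1} (FS' : R_{[S'/x]}) = (S:P)$. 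If $S = \lambda yQS'$ is a $\lambda$-term witnessed by some $\Delta \subseteq \Gamma$ with $y \notin \mathsf{F}(\Delta) \cup \mathsf{F}(Q)$ and $P' \in \lceil S' \rceil^{\Delta,(y:Q)}_n$ and $[P]_\alpha = [\pi yQP']_\alpha$, the inductive hypothesis gives $\Delta, (y:Q) \vdash_n (S':P')$, then \eqref{Abn} gives $\Delta \vdash_{n+1} (\lambda yQS' : \pi yQP')$, and monotonicity of $\vdash_{n+1}$ lifts this from $\Delta$ to $\Gamma$.

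For $\Rightarrow$, I would show that the relation $\{(\Gamma, (S:P)) : P \in \lceil S \rceil^\Gamma_{n+1}\}$ is itself monotone, reflexive, and closed under the inference rules \eqref{Appn} and \eqref{Abn}; since $\vdash_{n+1}$ is by definition the smallest such, the desired inclusion then follows. Monotonicity of $\lceil \cdot \rceil^\Gamma_{n+1}$ in $\Gamma$ is precisely \eqref{TypeMonotonicity} already established in the proof of \Cref{FiniteInference}; reflexivity holds because every clause of the recursive definition of $\lceil S \rceil^\Gamma_{n+1}$ contains $\lceil S \rceil^\Gamma_0$ as a subset, so $(S:P) \in \Gamma$ forces $P \in \lceil S \rceil^\Gamma_{n+1}$; and closure under \eqref{Appn} and \eqref{Abn} is immediate from the corresponding defining clauses once the inductive hypothesis at level $n$ translates the premises $\vdash_n$ into memberships in $\lceil \cdot \rceil^\cdot_n$.

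The whole argument is a direct translation between the syntactic inference rules and the recursive definition, so there is no deep obstacle, only bookkeeping with $\alpha$-conversion: statements formally live in $[\mathsf{T}]_\alpha \times [\mathsf{T}]_\alpha$, so the memberships $R_{[S/x]} \in \lceil FS \rceil^\Gamma_{n+1}$ and $\pi yQP \in \lceil \lambda yQS \rceil^\Gamma_{n+1}$ should be read up to $\equiv_\alpha$, and in the $\lambda$-clause the built-in side condition $y \notin \mathsf{F}(\Delta) \cup \mathsf{F}(Q)$ matches exactly the hypothesis of \eqref{Abn}.
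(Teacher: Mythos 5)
Your proposal is correct and follows essentially the same route as the paper: induction on $n$, a case split on the three defining clauses of $\lceil\cdot\rceil^\Gamma_{n+1}$ for the $\Leftarrow$ direction, and for $\Rightarrow$ the observation that membership in $\lceil\cdot\rceil^\Gamma_{n+1}$ defines a monotone reflexive relation closed under \eqref{Appn} and \eqref{Abn}, whence minimality of $\vdash_{n+1}$ gives the inclusion. The paper makes the $\alpha$-invariance of $\lceil\cdot\rceil^\Gamma_n$ slightly more explicit as a preliminary inductive observation, but this matches the bookkeeping remark at the end of your argument.
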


\begin{proof}
    The $n=0$ case holds by definition.  Now assume the result for $n$.  For the $\Leftarrow$ part, first note that if $P\in\lceil S\rceil^\Gamma_0$ and hence $(S:P)\in\Gamma$ then $\Gamma\vdash_{n+1}(S:P)$, as $\vdash_{n+1}$ is monotone and reflexive.  In particular, $P\in\lceil t\rceil^\Gamma_{n+1}$ implies $\Gamma\vdash_n(t:P)$ when $t\in\mathsf{T}^0\cup\rho\mathsf{T}$.  Next note that if $P\in\lceil S\rceil^\Gamma_n$ and $\pi xPR\in\lceil F\rceil^\Gamma_n$ then, by the inductive hypothesis, $\Gamma\vdash_n(F:\pi xPR),(S:P)$ and hence $\Gamma\vdash_{n+1}(FS:R_{[S/x]})$.  Thus $P'\in\lceil FS\rceil^\Gamma_{n+1}$ implies $\Gamma\vdash_{n+1}(FS:P')$.  Finally note that if we have $\Delta\subseteq\Gamma$, $x\in\mathsf{V}\setminus(\mathsf{F}(\Delta)\cup\mathsf{F}(Q))$ and $P\in\lceil S\rceil^{\Delta,(x:Q)}_n$ then, again by the inductive hypothesis, $\Delta,(x:Q)\vdash_n(S:P)$ so $\Delta\vdash_{n+1}(\lambda xQS:\pi xQP)$ and hence $\Gamma\vdash_{n+1}(\lambda xQS:\pi xQP)$, as $\vdash_{n+1}$ is monotone.  This means $P'\in\lceil\lambda xQS\rceil^\Gamma_{n+1}$ implies $\Gamma\vdash_{n+1}(\lambda xQS:P')$.  Thus the $n+1$ case also holds and the $\Leftarrow$ part for all $n\in\omega$ follows by induction.

    Conversely, first note that a simple inductive argument shows that each $\lceil\cdot\rceil^\Gamma_n$ is $\alpha$-invariant in the sense that $S\equiv_\alpha S'$ implies $\lceil S\rceil^\Gamma_n=\lceil S'\rceil^\Gamma_n$ and $P'\equiv_\alpha P\in\lceil S\rceil^\Gamma_n$ implies $P'\in\lceil S\rceil^\Gamma_n$.  For each $n\in\omega$, we may thus temporarily define $\Gamma\vdash'_n(S:P)$ to mean $P\in\lceil S\rceil^\Gamma_n$.  As $\lceil S\rceil^\Gamma_0\subseteq\lceil S\rceil^\Gamma_n$, we immediately see that $\vdash'_n$ is reflexive.  Also $\vdash'_n$ is monotone, by \eqref{TypeMonotonicity}.  The definition of $\lceil\cdot\rceil^\Gamma_n$ also immediately shows that $\vdash'_n$ satisfies \eqref{Appn} and \eqref{Abn}, showing that ${\vdash_n}\subseteq{\vdash'_n}$ and hence ${\vdash_n}={\vdash'_n}$, as required.
\end{proof}

Now we observe $\vdash_n$ is invariant under substitution.  Here we define
\[\Gamma_{[T/x]}=\{(S_{[T/x]}:P_{[T/x]})\mathrel{|}(S:P)\in\Gamma\}.\]

\begin{proposition}\label{SubInv}
    For any $n\in\omega$, $\Gamma\subseteq\mathsf{S}$, $P,S,T\in\mathsf{T}$ and $x\in\mathsf{V}$,
    \[\Gamma\vdash_n(S:P)\qquad\Rightarrow\qquad\Gamma_{[T/x]}\vdash_n(S_{[T/x]}:P_{[T/x]}).\]
\end{proposition}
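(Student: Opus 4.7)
The plan is to induct on $n$, using the typing function characterisation from \Cref{nTyping}, i.e. to show that $P\in\lceil S\rceil^\Gamma_n$ implies $P_{[T/x]}\in\lceil S_{[T/x]}\rceil^{\Gamma_{[T/x]}}_n$. The base case $n=0$ is immediate from the definition: if $(S:P)\in\Gamma$ then $(S_{[T/x]}:P_{[T/x]})\in\Gamma_{[T/x]}$. This also handles the cases in the inductive step where $S\in\mathsf{T}^0\cup\rho\mathsf{T}$, since then $\lceil S\rceil^\Gamma_{n+1}=\lceil S\rceil^\Gamma_0$ and $S_{[T/x]}$ has the same outer shape (even when $S=x$, in which case $(x:P)\in\Gamma$ translates to $(T:P_{[T/x]})\in\Gamma_{[T/x]}$).

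The two substantial cases are $\beta$-terms and $\lambda$-terms. For $S=FS'$ we have $P\equiv_\alpha R_{[S'/y]}$ together with $\pi yP'R\in\lceil F\rceil^\Gamma_n$ and $P'\in\lceil S'\rceil^\Gamma_n$. Here I would first invoke the $\alpha$-invariance of $\lceil\cdot\rceil^\Gamma_n$ (a routine induction, already used implicitly in the proof of \Cref{nTyping}) to $\alpha$-rename the bound $y$ so that $y\neq x$ and $y\notin\mathsf{F}(T)\cup\mathsf{F}(P')$. Under this choice, the substitution formulas give $(\pi yP'R)_{[T/x]}=\pi yP'_{[T/x]}R_{[T/x]}$ and $(FS')_{[T/x]}=F_{[T/x]}S'_{[T/x]}$, so the inductive hypothesis applied to $\pi yP'R\in\lceil F\rceil^\Gamma_n$ and $P'\in\lceil S'\rceil^\Gamma_n$ yields $\pi yP'_{[T/x]}R_{[T/x]}\in\lceil F_{[T/x]}\rceil^{\Gamma_{[T/x]}}_n$ and $P'_{[T/x]}\in\lceil S'_{[T/x]}\rceil^{\Gamma_{[T/x]}}_n$. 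The defining clause for $\lceil FS\rceil^\Gamma_{n+1}$ then gives $R_{[T/x][S'_{[T/x]}/y]}\in\lceil F_{[T/x]}S'_{[T/x]}\rceil^{\Gamma_{[T/x]}}_{n+1}$. Finally the cited substitution-swap identity from \cite[Appendix A]{HindleySeldin2008}, together with $x\neq y\notin\mathsf{F}(T)$, gives $P_{[T/x]}\equiv_\alpha R_{[S'/y][T/x]}\equiv_\alpha R_{[T/x][S'_{[T/x]}/y]}$, so $P_{[T/x]}$ lands in the desired $\alpha$-closed set.

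For $S=\lambda yQS'$ the approach is analogous. From $P\equiv_\alpha\pi yQP'$ with $\Delta\subseteq\Gamma$, $y\notin\mathsf{F}(\Delta)\cup\mathsf{F}(Q)$ and $P'\in\lceil S'\rceil^{\Delta,(y:Q)}_n$, I again $\alpha$-rename $y$ so that $y\neq x$ and $y\notin\mathsf{F}(T)$. Then $(\lambda yQS')_{[T/x]}=\lambda y Q_{[T/x]}S'_{[T/x]}$ and $(\pi yQP')_{[T/x]}=\pi yQ_{[T/x]}P'_{[T/x]}$, and the inductive hypothesis turns $P'\in\lceil S'\rceil^{\Delta,(y:Q)}_n$ into $P'_{[T/x]}\in\lceil S'_{[T/x]}\rceil^{\Delta_{[T/x]},(y:Q_{[T/x]})}_n$. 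A small bookkeeping lemma (provable by a straightforward induction on terms) says that $\mathsf{F}(R_{[T/x]})\subseteq(\mathsf{F}(R)\setminus\{x\})\cup\mathsf{F}(T)$ for any $R$, so our freshness choice on $y$ guarantees $y\notin\mathsf{F}(\Delta_{[T/x]})\cup\mathsf{F}(Q_{[T/x]})$, and the defining clause for $\lceil\lambda\cdots\rceil$ delivers $\pi yQ_{[T/x]}P'_{[T/x]}\in\lceil\lambda yQ_{[T/x]}S'_{[T/x]}\rceil^{\Gamma_{[T/x]}}_{n+1}$, which $\alpha$-equals $P_{[T/x]}$.

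The main obstacle is book-keeping around bound variables: ensuring that after the substitution $[T/x]$ the bound variable appearing in the outer $\pi$ or $\lambda$ still satisfies the freshness conditions demanded by the definition of $\lceil\cdot\rceil^\Gamma_{n+1}$, and that the substitutions commute up to $\alpha$. Both points are handled by first $\alpha$-renaming the bound variable to avoid $x$ and $\mathsf{F}(T)$, and then invoking the substitution-commutation identities from \cite[Appendix A]{HindleySeldin2008} cited after \Cref{AlphaEqual2}. Once that is set up, the actual inductive steps are essentially transcriptions of the clauses in the definition of $\lceil\cdot\rceil^\Gamma_{n+1}$.
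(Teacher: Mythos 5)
Your proposal is correct and follows essentially the same route as the paper: an induction on $n$ over the clauses defining $\vdash_n$ (equivalently, by \Cref{nTyping}, over $\lceil\cdot\rceil^\Gamma_n$), with the work concentrated in the same substitution-commutation identities cited after \Cref{AlphaEqual2}. The only real difference is presentational -- you uniformly pre-$\alpha$-rename the bound variable to avoid $x$ and $\mathsf{F}(T)$ before substituting, whereas the paper instead splits into the cases $y=x$ or $x\notin\mathsf{F}(R)$ versus $y\neq x\in\mathsf{F}(R)$ and, in the abstraction case, performs the renaming by applying the inductive hypothesis with $[z/y]$; both devices discharge the same bookkeeping.
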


\begin{proof}
    First note $(S:P)\in\Gamma$ implies $(S_{[T/x]}:P_{[T/x]})\in\Gamma_{[T/x]}$ or, equivalently, $\Gamma\vdash_0(S:P)$ implies $\Gamma_{[T/x]}\vdash_0(S_{[T/x]}:P_{[T/x]})$.
    
    Now assume the result for $n$.  If $\Gamma\vdash_n(S:P)$ and $\Gamma\vdash_n(F:\pi xPR)$ then this means $\Gamma_{[T/x]}\vdash_n(S_{[T/x]}:P_{[T/x]})$.  Moreover, if $y=x$ or $x\notin\mathsf{F}(R)$ then $\Gamma_{[T/x]}\vdash_n(F_{[T/x]}:(\pi yPR)_{[T/x]})=(F_{[T/x]}:\pi yP_{[T/x]}R)$ so
    \[\Gamma_{[T/x]}\vdash_{n+1}(F_{[T/x]}S_{[T/x]}:R_{[S_{[T/x]}/y]})=((FS)_{[T/x]}:R_{[S/y][T/x]}).\]
    On the other hand, if $y\neq x\in\mathsf{F}(R)$ then we have $z\in\mathsf{V}\setminus(\mathsf{V}(RT))$ with $\Gamma_{[T/x]}\vdash_n(F_{[T/x]}:(\pi yPR)_{[T/x]})=(F_{[T/x]}:\pi zP_{[T/x]}R_{[z/y][T/x]})$ so
    \begin{align*}
        \Gamma_{[T/x]}\vdash_{n+1}(F_{[T/x]}S_{[T/x]}:R_{[z/y][T/x][S_{[T/x]}/z]})&=((FS)_{[T/x]}:R_{[z/y][S/z][T/x]})\\
        &=((FS)_{[T/x]}:R_{[S/y][T/x]}).
    \end{align*}
    Thus $\Gamma\vdash_{n+1}(FS:P')$ implies $\Gamma_{[T/x]}\vdash_{n+1}((FS)_{[T/x]}:P'_{[T/x]})$.

    Now say $\Delta$ is finite, $y\in\mathsf{V}\setminus(\mathsf{F}(\Delta)\cup\mathsf{F}(Q))$ and $\Delta,(y:Q)\vdash_n(S:P)$.  If $y\neq x$ then, for any $z\in\mathsf{V}\setminus(\mathsf{F}(\Delta)\cup\mathsf{F}(xPQST))$, it follows that $\Delta,(z:Q)\vdash_n(S_{[z/y]}:P_{[z/y]})$ so
    \[\Delta_{[T/x]},(z:Q_{[T/x]})\vdash_n(S_{[z/y][T/x]}:P_{[z/y][T/x]})\]
    and hence
    \begin{align*}
        \Delta_{[T/x]}\vdash_{n+1}&(\lambda zQ_{[T/x]}S_{[z/y][T/x]}:\pi zQ_{[T/x]}P_{[z/y][T/x]})\\
        =\ &((\lambda yQS)_{[T/x]}:(\pi yQP)_{[T/x]})
    \end{align*}
    On the other hand, if $y=x$ then we immediately see that
    \begin{align*}
        \Delta\vdash_{n+1}(\lambda yQS:\pi yQP)&=(\lambda yQ_{[T/x]}S:\pi yQ_{[T/x]}P)\\
        &=((\lambda yQS)_{[T/x]}:(\pi yQP)_{[T/x]}).
    \end{align*}
    Thus $\Gamma\vdash_{n+1}(\lambda yQS:P')$ implies $\Gamma_{[T/x]}\vdash_{n+1}((\lambda yQS)_{[T/x]}:P'_{[T/x]})$.  The general result now follows by induction.
\end{proof}

Using this we can show that ${\vdash_m}\circ{\vdash_n}\subseteq{\vdash_{m+n}}$, for all $m,n\in\omega$.

\begin{proposition}\label{vdashmn}
    For all $m,n\in\omega$, $\Gamma,\Delta\subseteq\mathsf{S}$ and $S,P\in\mathsf{T}$,
    \[\Gamma\vdash_m\Delta\vdash_n(S:P)\qquad\Rightarrow\qquad\Gamma\vdash_{m+n}(S:P).\]
\end{proposition}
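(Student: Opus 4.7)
My approach is induction on $n$, using \Cref{nTyping} to peel off the topmost inference rule. As a preliminary, I would first prove an upward monotonicity-in-$n$ lemma, ${\vdash_n}\subseteq{\vdash_{n+1}}$ for every $n$, by a straightforward induction on $n$ that compares the clauses of $\lceil\cdot\rceil^\Gamma_n$ with those of $\lceil\cdot\rceil^\Gamma_{n+1}$ (the atomic and $\rho$-cases coincide, while the $\beta$- and $\lambda$-cases follow by applying the inductive hypothesis to the witnesses in the App- and Ab-unions). This is needed to reconcile the indices on the right-hand side.

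For the main induction, the base case $n=0$ is immediate: $\Delta\vdash_0(S:P)$ just means $(S:P)\in\Delta$, so $\Gamma\vdash_m\Delta$ directly gives $\Gamma\vdash_m(S:P)=\Gamma\vdash_{m+0}(S:P)$. For the inductive step, suppose $\Gamma\vdash_m\Delta\vdash_{n+1}(S:P)$; by \Cref{nTyping}, $P\in\lceil S\rceil^\Delta_{n+1}$, and I split according to how $P$ enters this set. If $(S:P)\in\Delta$, then $\Gamma\vdash_m(S:P)$, which the monotonicity lemma lifts to $\Gamma\vdash_{m+n+1}(S:P)$. If $S=FS'$ and $P=R_{[S'/x]}$ with witnesses $P_0\in\lceil S'\rceil^\Delta_n$ and $\pi xP_0R\in\lceil F\rceil^\Delta_n$, then the inductive hypothesis applied to $\Delta\vdash_n(S':P_0)$ and $\Delta\vdash_n(F:\pi xP_0R)$ yields $\Gamma\vdash_{m+n}(S':P_0)$ and $\Gamma\vdash_{m+n}(F:\pi xP_0R)$, whereupon \eqref{Appn} produces $\Gamma\vdash_{m+n+1}(FS':R_{[S'/x]})$.

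The delicate case is the Ab-clause, since its side condition restricts the free variables of the \emph{ambient} context. Suppose $S=\lambda yQS_0$ and $P=\pi yQP_0$ arise from witnesses $\Delta'\subseteq\Delta$ with $y\notin\mathsf{F}(\Delta')\cup\mathsf{F}(Q)$ and $\Delta',(y:Q)\vdash_n(S_0:P_0)$. We cannot apply \eqref{Abn} to $\Gamma$ directly because $y$ may be free in $\Gamma$, so we rename: pick $y'\in\mathsf{V}\setminus(\mathsf{F}(\Gamma)\cup\mathsf{F}(\Delta')\cup\mathsf{V}(QS_0P_0))$, which exists as $\mathsf{V}$ is infinite. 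Then \Cref{SubInv} with $T:=y'$ and $x:=y$ yields $\Delta',(y':Q)\vdash_n(S_{0[y'/y]}:P_{0[y'/y]})$, using $\Delta'_{[y'/y]}=\Delta'$ and $Q_{[y'/y]}=Q$ by \Cref{FreeChar}. Since $\Gamma\vdash_m\Delta'$ (as $\Delta'\subseteq\Delta$) and $\Gamma,(y':Q)\vdash_m(y':Q)$ by the monotonicity and reflexivity of $\vdash_m$, we have $\Gamma,(y':Q)\vdash_m\Delta',(y':Q)$, so the inductive hypothesis delivers $\Gamma,(y':Q)\vdash_{m+n}(S_{0[y'/y]}:P_{0[y'/y]})$. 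Now $y'\notin\mathsf{F}(\Gamma)\cup\mathsf{F}(Q)$, so \eqref{Abn} yields $\Gamma\vdash_{m+n+1}(\lambda y'QS_{0[y'/y]}:\pi y'QP_{0[y'/y]})$, and since $y'\notin\mathsf{F}(S_0)\cup\mathsf{F}(P_0)$ this statement is $\alpha$-equal, and therefore equal, to the target $(\lambda yQS_0:\pi yQP_0)$. The main obstacle throughout is precisely this renaming of bound variables, and it is exactly the reason the machinery of $\alpha$-conversion and substitution invariance was developed in the earlier sections.
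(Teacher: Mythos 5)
Your overall strategy---induction peeling off the top rule via \Cref{nTyping}, with a renaming via \Cref{SubInv} in the abstraction case---is the same as the paper's, and your base case, preliminary monotonicity lemma, and application case are all fine. However, there is a genuine gap in the abstraction case: you choose $y'\in\mathsf{V}\setminus(\mathsf{F}(\Gamma)\cup\mathsf{F}(\Delta')\cup\mathsf{V}(QS_0P_0))$ and justify its existence by the infinitude of $\mathsf{V}$. But the proposition quantifies over arbitrary $\Gamma\subseteq\mathsf{S}$, and $\mathsf{F}(\Gamma)$ can exhaust $\mathsf{V}$ (take $\Gamma=\{(x:\mathsf{c})\mathrel{|}x\in\mathsf{V}\}$, for instance), in which case no such $y'$ exists and \eqref{Abn} cannot be applied with ambient context $\Gamma$ at all. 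The conclusion $\Gamma\vdash_{m+n+1}(\lambda yQS_0:\pi yQP_0)$ can still hold for such $\Gamma$, but only via monotonicity from a smaller context, so your direct application of \eqref{Abn} to $\Gamma,(y':Q)$ does not go through.

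The paper closes exactly this hole with \Cref{FiniteInference}: the witness set $\Delta'$ can be taken finite, each of its finitely many statements is derivable under $\vdash_m$ from a finite subset of $\Gamma$, and the union of these gives a finite $\Phi\subseteq\Gamma$ with $\Phi\vdash_m\Delta'$. The fresh variable is then chosen outside the \emph{finite} set $\mathsf{F}(\Phi)$ (together with the finitely many variables of $Q$, $S_0$, $P_0$), \eqref{Abn} is applied over $\Phi$, and the conclusion for $\Gamma$ follows by monotonicity. Inserting this finiteness reduction into your argument---replacing $\Gamma$ by such a $\Phi$ when choosing $y'$ and when invoking \eqref{Abn}---repairs the proof; the rest of your reasoning, including the final $\alpha$-identification of $(\lambda y'QS_{0[y'/y]}:\pi y'QP_{0[y'/y]})$ with the target statement, is correct.
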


\begin{proof}
    The proof is by induction.  First note that $\Gamma\vdash_m\Delta\vdash_0(S:P)$ means $\Gamma\vdash_m\Delta\ni(S:P)$, which immediately yields $\Gamma\vdash_m(S:P)$.  Now assume the result holds for $m$ and $n$ and say $\Gamma\vdash_m\Delta\vdash_{n+1}(S:P)$.  If $(S:P)\in\Delta$ then $\Gamma\vdash_m(S:P)$, as we just noted, and hence $\Gamma\vdash_{m+n}(S:P)$, as ${\vdash_m}\subseteq{\vdash_{m+n}}$.  Otherwise $S$ must be a $\beta$-term or a $\lambda$-term.  In the former case, \Cref{nTyping} yields $F,T,Q\in\mathsf{T}$ such that $S\equiv_\alpha FT$, $P\equiv_\alpha Q_{[T/x]}$ and $\Delta\vdash_n(F:\pi xRQ),(T:R)$.  From the inductive assumption, it then follows that $\Gamma\vdash_{m+n}(F:\pi xRQ),(T:R)$ and hence $\Gamma\vdash_{m+n+1}(FT:Q_{[T/x]})=(S:P)$, by \hyperref[Appn]{(App$_{m+n}$)}.  On the other hand, in the latter case \Cref{FiniteInference,nTyping} yield finite $\Sigma\subseteq\Delta$, $Q,S',P'\in\mathsf{T}$ and $x\in\mathsf{V}\setminus(\mathsf{F}(\Sigma)\cup\mathsf{F}(Q))$ such that $S\equiv_\alpha\lambda xQS'$, $P\equiv_\alpha\pi xQP'$ and $\Sigma,(x:Q)\vdash_n(S':P')$.  As $\Gamma\vdash_m\Delta\supseteq\Sigma$, we also have finite $\Phi\subseteq\Gamma$ with $\Phi\vdash_m\Sigma$.  Taking any $y\in\mathsf{V}\setminus(\mathsf{F}(\Phi)\cup\mathsf{F}(P'QS'))$, \Cref{SubInv} then yields $\Sigma,(y:Q)\vdash_n(S'_{[y/x]}:P'_{[y/x]})$.  As $\vdash_m$ is monotone and reflexive, $\Phi,(y:Q)\vdash_m\Sigma,(y:Q)$ and so the inductive assumption yields $\Phi,(x:Q)\vdash_{m+n}(S'_{[y/x]}:P'_{[y/x]})$.  Then \hyperref[Abn]{(Ab$_{m+n}$)} yields $\Gamma\supseteq\Phi\vdash_{m+n+1}(\lambda yQS'_{[y/x]}:\pi yQP'_{[y/x]})=(S:P)$.
\end{proof}

Now we can show $\vdash$ is the smallest monotone reflexive relation such that, for all $\Gamma\subseteq\mathsf{S}$, $F,Q,R,S,T\in\mathsf{T}$ and $x,y\in\mathsf{V}$ with $y\notin\mathsf{F}(\Gamma)\cup\mathsf{F}(Q)$,
\begin{align}
    \label{App}\tag{App}\Gamma\vdash(F:\pi xRS),(T:R)\quad&\Rightarrow\quad\Gamma\vdash(FT:S_{[T/x]})\quad\text{and}\\
    \label{Ab}\tag{Ab}\Gamma,(y:Q)\vdash(S:P)\quad&\Rightarrow\quad\Gamma\vdash(\lambda yQS:\pi yQP).  
\end{align}

\begin{corollary}
    We have ${\vdash}={\vdash_\omega}:=\bigcup_{n\in\omega}\vdash_n$.
\end{corollary}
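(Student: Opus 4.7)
The plan is to establish the two inclusions ${\vdash_\omega} \subseteq {\vdash}$ and ${\vdash} \subseteq {\vdash_\omega}$ separately, both leveraging the machinery just developed.

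For ${\vdash_\omega} \subseteq {\vdash}$, I would induct on $n$ to show ${\vdash_n} \subseteq {\vdash}$. The base case is immediate because $\vdash$ is a sequent, hence monotone and reflexive by \Cref{SequentMonoRefl}. For the inductive step, by definition $\vdash_{n+1}$ is the smallest monotone reflexive relation satisfying \eqref{Appn} and \eqref{Abn} (which have $\vdash_n$ on the hypothesis side). So it suffices to check that $\vdash$ itself satisfies the same closure properties with $\vdash$ on the right. Using the inductive hypothesis ${\vdash_n} \subseteq {\vdash}$, any instance of the \eqref{Appn} premise $\Gamma \vdash_n (F:\pi xPR),(S:P)$ upgrades to $\Gamma \vdash (F:\pi xPR),(S:P)$, and combining this with the axiom \eqref{App'} and the sequent property of $\vdash$ yields $\Gamma \vdash (FS:R_{[S/x]})$. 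The case of \eqref{Abn} is analogous, since \eqref{Ab'} is itself the desired rule for $\vdash$.

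For the reverse inclusion ${\vdash} \subseteq {\vdash_\omega}$, I would verify that $\vdash_\omega$ is itself a sequent satisfying \eqref{App'} and \eqref{Ab'}, so that minimality of $\vdash$ forces ${\vdash} \subseteq {\vdash_\omega}$. Monotonicity and reflexivity of $\vdash_\omega$ transfer pointwise from the corresponding properties of every $\vdash_n$. The schema \eqref{App'} holds because reflexivity of $\vdash_0$ places both assumptions on the right-hand side, after which a single application of \eqref{Appn} at $n=0$ lands the conclusion in ${\vdash_1} \subseteq {\vdash_\omega}$; likewise, given a premise of \eqref{Ab'} holding in $\vdash_\omega$, I would pick an $n$ witnessing it in $\vdash_n$ and invoke \eqref{Abn} to land in $\vdash_{n+1}$.

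The only genuinely non-trivial step, and the main obstacle, is transitivity of $\vdash_\omega$. Given $\Gamma \vdash_\omega \Delta \vdash_\omega (S:P)$, pick $n$ with $\Delta \vdash_n (S:P)$, then invoke \Cref{FiniteInference} (via \Cref{nTyping}) to extract a finite $\Sigma \subseteq \Delta$ with $\Sigma \vdash_n (S:P)$. For each of the finitely many statements $(S':P') \in \Sigma$, choose an index $m_{S':P'}$ witnessing $\Gamma \vdash_{m_{S':P'}} (S':P')$, and let $m$ be the maximum. A brief induction on the definition of $\lceil \cdot \rceil^\Gamma_k$ confirms ${\vdash_j} \subseteq {\vdash_m}$ whenever $j \leq m$, so $\Gamma \vdash_m \Sigma$. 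Finally, \Cref{vdashmn} delivers $\Gamma \vdash_{m+n} (S:P)$, hence $\Gamma \vdash_\omega (S:P)$. This is where the finiteness and composition lemmas do the real work; everything else is bookkeeping around this transitivity argument.
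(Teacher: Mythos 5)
Your proof is correct and follows essentially the same route as the paper's: induction on $n$ to get ${\vdash_n}\subseteq{\vdash}$ for the inclusion ${\vdash_\omega}\subseteq{\vdash}$, and verifying that $\vdash_\omega$ is a sequent satisfying \eqref{App'} and \eqref{Ab'} (with transitivity coming from \Cref{vdashmn}) for the converse. If anything, your treatment of transitivity is slightly more careful than the paper's, which cites \Cref{vdashmn} directly without spelling out the reduction via \Cref{FiniteInference} to a finite $\Sigma\subseteq\Delta$ and the choice of a single index $m$ with $\Gamma\vdash_m\Sigma$, a step that is genuinely needed when $\Delta$ is infinite.
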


\begin{proof}
    Certainly ${\vdash_0}\subseteq{\vdash}$.  Now assume that ${\vdash_n}\subseteq{\vdash}$.  Therefore if $\Gamma\vdash_n(F:\pi xRS),(T:R)$ then
    \[\Gamma\vdash(F:\pi xRS),(T:R)\vdash(FT:S_{[T/x]})\]
    so $\Gamma\vdash(FT:S_{[T/x]})$.  And if $y\notin\mathsf{F}(\Gamma)\cup\mathsf{F}(Q)$ and $\Gamma,(y:Q)\vdash_n(S:P)$ then $\Gamma,(y:Q)\vdash(S:P)$ so $\Gamma\vdash(\lambda yQS:\pi yQP)$.  \Cref{SequentMonoRefl} then yields ${\vdash_{n+1}}\subseteq{\vdash}$ so ${\vdash_\omega}\subseteq{\vdash}$, by induction.

    Conversely, ${\vdash_\omega}$ is transitive, by \eqref{vdashmn}.  Also $\vdash_\omega$ contains ${\vdash_0}$ and is thus a sequent.  But also $(F:\pi xRS),(T:R)\vdash_1(FT:S_{[T/x]})$ and, for any $y\notin\mathsf{F}(\Gamma)\cup\mathsf{F}(Q)$, we see that $\Gamma,(y:Q)\vdash_\omega(S:P)$ implies $\Gamma\vdash_\omega(\lambda yQS:\pi yQP)$.  This shows that ${\vdash}\subseteq{\vdash_\omega}$ as well.
\end{proof}

Similarly, let us define $\lceil\cdot\rceil^\Gamma\in\mathcal{P}(\mathsf{T})^\mathsf{T}$ so that, for all $\Gamma\subseteq\mathsf{S}$ and $S\in\mathsf{T}$,
\[\lceil S\rceil^\Gamma=\bigcup_{n\in\omega}\lceil S\rceil^\Gamma_n.\]
The defining properties of $\lceil\cdot\rceil^\Gamma_n$ then also apply to $\lceil\cdot\rceil^\Gamma$, i.e.
\begin{align*}
    \hspace{-30pt}\lceil t\rceil^\Gamma&=\lceil t\rceil^\Gamma_0,\text{ if }t\in\mathsf{T}^0\cup\rho\mathsf{T},\\
    \hspace{-30pt}\lceil FS\rceil^\Gamma&=\lceil FS\rceil^\Gamma_0\cup\bigcup\{[R_{[S/x]}]_\alpha\mathrel{|}\exists P\in\lceil S\rceil^\Gamma\ (\pi xPR\in\lceil F\rceil^\Gamma)\}\text{ and}\\
    \hspace{-30pt}\lceil\lambda xQS\rceil^\Gamma&=\lceil\lambda xQS\rceil^\Gamma_0\cup\bigcup\{[\pi xQP]_\alpha\mathrel{|}\exists\Delta\subseteq\Gamma\ (x\notin\mathsf{F}(\Delta)\cup\mathsf{F}(Q)\text{ and }P\in\lceil S\rceil^{\Delta,(x:Q)})\}.
\end{align*}
It follows that if $\Gamma$ is finite then, for all $S\in\mathsf{T}$, $\lceil S\rceil^\Gamma$ is also finite, modulo $\alpha$-conversion, and hence computable.  In other words, given $S$ and $\Gamma$, we can mechanically compute all the predicates/types $P$ such that $\Gamma\vdash(S:P)$.  More succinctly, we say that $\vdash$ has `type inference'.

If we also want unique typing (again up to $\alpha$-conversion) then we can restrict to certain subsets of $\mathsf{S}$ on the left.

\begin{definition}
    We call $\Gamma\subseteq\mathsf{S}$ a \emph{context} if $\Gamma^{-1}$ is a function with
    \[\mathrm{dom}(\Gamma^{-1})\subseteq[\mathsf{T}^0\cup\rho\mathsf{T}]_\alpha.\]
\end{definition}

Put another way, a context $\Gamma$ is just a set of type declarations for variables, constants and $\rho$-terms, at most one declaration for each term, i.e.~if $P,Q,S\in\mathsf{T}$ then $(S:P)\in\Gamma$ implies $S\in\mathsf{T}^0\cup\rho\mathsf{T}$ and also $(S:P),(S:Q)\in\Gamma$ implies $P\equiv_\alpha Q$.

\begin{proposition}\label{UniqueTyping}
    If $\Gamma$ is a context and $S,P,Q\in\mathsf{T}$ then
    \[\Gamma\vdash(S:P),(S:Q)\qquad\Rightarrow\qquad P\equiv_\alpha Q.\]
\end{proposition}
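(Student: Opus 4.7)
The plan is to translate the hypothesis into a statement about the typing function $\lceil\cdot\rceil^\Gamma$ and then establish uniqueness by induction on the term height of the subject $S$. By \Cref{nTyping} and the fact ${\vdash}={\vdash_\omega}$, the hypothesis $\Gamma\vdash(S:P),(S:Q)$ is equivalent to $P,Q\in\lceil S\rceil^\Gamma$. Thus it suffices to show that for any context $\Gamma$ and any $S\in\mathsf{T}$, the set $\lceil S\rceil^\Gamma$ is contained in a single $\equiv_\alpha$-class.

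For $S\in\mathsf{T}^0\cup\rho\mathsf{T}$ we have $\lceil S\rceil^\Gamma=\lceil S\rceil^\Gamma_0=\Gamma^{-1}[S]_\alpha$, which is a single $\equiv_\alpha$-class because $\Gamma^{-1}$ is a function on $[\mathsf{T}^0\cup\rho\mathsf{T}]_\alpha$ by the context hypothesis. For $S=FT$ the base level $\lceil S\rceil^\Gamma_0$ is empty, so each of $P,Q\in\lceil FT\rceil^\Gamma$ comes with witnesses $P_i\in\lceil T\rceil^\Gamma$ and $\pi x_iP_iR_i\in\lceil F\rceil^\Gamma$ satisfying $P\equiv_\alpha R_1{}_{[T/x_1]}$ and $Q\equiv_\alpha R_2{}_{[T/x_2]}$. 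Applying the inductive hypothesis to $F$ gives $\pi x_1P_1R_1\equiv_\alpha\pi x_2P_2R_2$. Since $\pi=\rho\lambda$, this propagates to $\lambda x_1P_1R_1\equiv_\alpha\lambda x_2P_2R_2$, and by the standard syntactic properties of $\alpha$-conversion on $\lambda$-terms (cf.\ \cite[Appendix A]{HindleySeldin2008}) we conclude $P_1\equiv_\alpha P_2$ and, for any $z$ fresh for $x_1,x_2,R_1,R_2,T$, the relation $R_1{}_{[z/x_1]}\equiv_\alpha R_2{}_{[z/x_2]}$. Substituting $T$ for $z$ and invoking the substitution compatibility of $\equiv_\alpha$ noted at the end of \S 5 then yields $R_1{}_{[T/x_1]}\equiv_\alpha R_2{}_{[T/x_2]}$, whence $P\equiv_\alpha Q$.

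For $S=\lambda xQ'S'$, again $\lceil S\rceil^\Gamma_0=\emptyset$. Using the $\alpha$-invariance of $\lceil\cdot\rceil^\Gamma$ observed in the proof of \Cref{nTyping}, we may replace $x$ by a variable $z$ fresh for $\mathsf{F}(\Gamma)\cup\mathsf{V}(Q'S')$, so that $S\equiv_\alpha\lambda zQ'S'_{[z/x]}$ and the freeness constraint $z\notin\mathsf{F}(\Delta)\cup\mathsf{F}(Q')$ is automatic for every $\Delta\subseteq\Gamma$. By monotonicity \eqref{TypeMonotonicity}, we may take $\Delta=\Gamma$, so every element of $\lceil\lambda zQ'S'_{[z/x]}\rceil^\Gamma$ is $\equiv_\alpha$-equivalent to $\pi zQ'P'$ for some $P'\in\lceil S'_{[z/x]}\rceil^{\Gamma,(z:Q')}$. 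Since $z\notin\mathsf{F}(\Gamma)$, no statement in $\Gamma$ has $z$ as its subject, so $\Gamma,(z:Q')$ is itself a context. The inductive hypothesis applied to $S'_{[z/x]}$ in this enlarged context gives uniqueness of $P'$ modulo $\equiv_\alpha$, hence uniqueness of $\pi zQ'P'$ modulo $\equiv_\alpha$, so $P\equiv_\alpha Q$.

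The main obstacle is the $\beta$-case, specifically extracting from the $\alpha$-equivalence of two $\pi$-terms both the equivalence of their head types and a workable equivalence of the bodies that survives substitution by $T$. This is a purely syntactic property of $\alpha$-conversion which would in principle require unwinding the contextual closure together with substitution compatibility of $\equiv_\alpha$, but for our sketch it is convenient to cite the standard development of \cite{HindleySeldin2008}.
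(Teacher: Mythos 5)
Your overall strategy is the same as the paper's: translate $\Gamma\vdash(S:P)$ into $P\in\lceil S\rceil^\Gamma$ via \Cref{nTyping} and then show by induction that $\lceil S\rceil^\Gamma$ meets at most one $\equiv_\alpha$-class, splitting on whether $S$ is atomic, a $\rho$-term, a $\beta$-term or a $\lambda$-term. (You induct on the height of $S$ where the paper inducts on the level $n$ of $\lceil\cdot\rceil^\Gamma_n$, but these amount to the same structural induction.) In the application case you are actually more explicit than the paper, which asserts without comment that uniqueness for $\lceil F\rceil^\Gamma_n$ passes to $\lceil FS\rceil^\Gamma_{n+1}$; this does require exactly the syntactic facts you isolate, namely that $\pi x_1P_1R_1\equiv_\alpha\pi x_2P_2R_2$ forces $R_1{}_{[T/x_1]}\equiv_\alpha R_2{}_{[T/x_2]}$, and citing \cite{HindleySeldin2008} for them is reasonable.

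There is, however, one step that fails as written. In the $\lambda$-case you rename the bound variable to some $z\notin\mathsf{F}(\Gamma)\cup\mathsf{V}(Q'S')$ so as to take $\Delta=\Gamma$. But the proposition places no finiteness restriction on the context: for $\Gamma=\{(x:\mathsf{c})\mathrel{|}x\in\mathsf{V}\}$ we have $\mathsf{F}(\Gamma)=\mathsf{V}$ and no such $z$ exists, while $\lceil\lambda xQ'S'\rceil^\Gamma$ can still be non-empty (the witnessing $\Delta$ in its definition may be $\emptyset$). The renaming is also unnecessary: since both $P$ and $Q$ lie in $\lceil\lambda xQ'S'\rceil^\Gamma$ for the \emph{same} syntactic term, their witnesses automatically share the same bound variable $x$ and domain $Q'$, differing only in the subsets $\Delta,\Delta'\subseteq\Gamma$ with $x\notin\mathsf{F}(\Delta)\cup\mathsf{F}(\Delta')\cup\mathsf{F}(Q')$. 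The paper simply passes to $\Delta\cup\Delta'$, notes that $\Delta\cup\Delta',(x:Q')$ is again a context (as $x\notin\mathsf{F}(\Delta\cup\Delta')$ rules out $x$ occurring as a subject there), and applies the inductive hypothesis in that context via \eqref{TypeMonotonicity} --- no variable fresh for all of $\Gamma$ is ever needed. With that repair your argument goes through.
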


\begin{proof}
    As $\Gamma$ is a context, we know $\lceil S\rceil^\Gamma_0$ is empty unless $S\in\mathsf{T}^0\cup\rho\mathsf{T}$, in which case all terms in $\lceil S\rceil^\Gamma_0$ are $\alpha$-convertible.  Now assume that this is true for $n$.  Then $\lceil S\rceil^\Gamma_n$ and $\lceil F\rceil^\Gamma_n$ can contain at most one term up to $\alpha$-convertibility and so the same is true of $\lceil FS\rceil^\Gamma_{n+1}$.  On the other hand, say we have finite $\Delta,\Delta'\subseteq\Gamma$, $P,P',Q,S\in\mathsf{T}$ and $x\in\mathsf{V}$ such that $x\notin\mathsf{F}(\Delta)\cup\mathsf{F}(\Delta')\cup\mathsf{F}(Q)$, $P\in\lceil S\rceil^{\Delta,(x:Q)}_n$ and $P'\in\lceil S\rceil^{\Delta',(x:Q)}_n$.  Then $P,P'\in\lceil S\rceil^{\Delta\cup\Delta',(x:Q)}_n$ so $P\equiv_\alpha P'$ and hence $\pi xQP\equiv_\alpha\pi xQP'$, showing that $\lceil\lambda xQS\rceil^\Gamma_{n+1}$ also contains at most one term up to $\alpha$-convertibility.  This completes the proof for $\lceil\cdot\rceil^\Gamma_{n+1}$ and hence for $\vdash$ by induction.
\end{proof}

\section{Axioms}

This completes the set-up of our bare bones type system and the examination of its basic syntactic properties.  However, to use it as a foundational system, we also need some axioms.  These are statements that are satisfied by all the interpretations we wish to consider and which we should thus be free to add to $\Gamma$ when trying to verify a particular instance $\Gamma\vdash(S:P)$ of the inference relation.

\subsection{Equality}

So far we have not specified how to handle equality, even though this is of vital importance for any foundational system.  Coming from classical predicate logic, the temptation would be to add equality statements $(R\equiv S)$ as a primitive notion, in addition to the typing statements $(R:S)$ we already have.  One would then add an inference rule for substitution, e.g.~stating that $\Gamma,(R\equiv S)\vdash(T:U)$ always implies $\Gamma,(R\equiv S)\vdash(T':U')$, where $T'$ and $U'$ are obtained by replacing one or more instances of $R$ by $S$.  Indeed, if we were aiming to make $\vdash$ not just sound but also complete w.r.t. $\vDash$ in the traditional sense of ${\vDash}\subseteq{\vdash}$ then we would be forced to do something like this.

The problem is that these substitutions can break type inference and make even type checking an undecidable problem in general.  In other words, there could cease to be any mechanical way of verifying whether $\Gamma\vdash(R:S)$ holds as there will potentially be infinitely many ways of getting from $\Gamma$ to $(R:S)$ via sequences of substitutions.  Also, when looking at a typing statement $(S:P)$ from the `propositions as types' perspective, it is really only substitution in the predicate $P$ that we care about.  Indeed, from this viewpoint $S$ is just code for a proof of $P$ -- we do not care if the code is longer than necessary if that means it is easier to decode in a mechanical way.  And with this in mind, we would thus be happy with a weaker form of completeness, namely
\[\tag{Predicate Completeness}\Gamma\vDash(S:P)\quad\Rightarrow\quad\exists R\ (\Gamma\vdash(R:P)).\]

The standard solution is to instead take equality as a proposition, one which will allow us to do substitution in the predicate of a typing statement, at the cost of changing the subject.  First recall that we have a constant $*\in\mathsf{C}$ to denote our default propositions.  Given a term $R$ on which we want to define equality, we then take further constants $\mathsf{eq}_R^*,\mathsf{sub}_R^*\in\mathsf{C}$ and introduce the axioms $(\mathsf{eq}_R^*:R\rightarrow R\rightarrow *)$ and
\[(\mathsf{sub}_R^*:(t:R)\rightarrow(u:R)\rightarrow\mathsf{eq}_R^*tu\rightarrow(P:R\rightarrow *)\rightarrow Pt\rightarrow Pu).\]
So if $\Gamma$ contains these axioms and
\[\Gamma\vdash(g:\mathsf{eq}_R^*TU),(P:R\rightarrow*),(h:PT)\]
then the \eqref{App} rule yields $\Gamma\vdash(\mathsf{sub}_R^*TUgh:PU)$, showing that the $T$ in $PT$ can indeed be replaced by a $U$, at the cost of replacing the subject $h$ with the longer term $\mathsf{sub}_R^*TUgh$.

In particular, $P$ above could be of the form $\lambda xRS$.  Given that we will then have $\llbracket(\lambda xRS)U\rrbracket=\llbracket S_{[U/x]}\rrbracket$, we would naturally like to replace the predicate $(\lambda xRS)U$ by $S_{[U/x]}$ as well.  Again this can be achieved by adding a propositional form of $\beta$-reduction as another axiom.  Specifically, if $x\notin\mathsf{F}(R)$, $\mathsf{beta}_R^S\in\mathsf{C}$, $(\mathsf{beta}_R^S:(x:R)\rightarrow(\lambda xRS)x\rightarrow S)\in\Gamma$ and $\Gamma\vdash(h:(\lambda xRS)U),(U:R)$ then $\Gamma\vdash(\mathsf{beta}_R^SUh:S_{[U/x]})$.

\subsection{Polymorphism}\label{Polymorphism}

One problem with the above approach is that we have to distinguish new constants and their axioms for each new term $R$ that we wish to work with.  If $R$ happens to have free variables then another problem is that adding the axioms to $\Gamma$ will then prevent us from applying the \eqref{Ab'} rule with those variables.  To some extent this is mitigated by the monotonicity of $\vdash$, which allows us to add the relevant axioms to $\Gamma$ at the very end of a deduction.  However, this is only true when $\Gamma\vdash(S:P)$ can already be proved without including those axioms in $\Gamma$.

To avoid these problems, it is natural to make the axioms `polymorphic'.  Specifically, we first distinguish a constant $\square$ which is meant to denote some universe of sets on which we would like to define various notions.  The corresponding polymorphic notion of equality would then be defined by replacing the term $R$ with a variable $r$ and adding $(r:\square)\rightarrow$ to the beginning of the relevant predicates, e.g.
\[(\mathsf{eq}^*:(r:\square)\rightarrow r\rightarrow r\rightarrow *).\]
Likewise, to make $\beta$-reduction polymorphic in $R$, consider the axiom
\[(\mathsf{beta}^S:(r:\square)\rightarrow(x:r)\rightarrow(\lambda xrS)x\rightarrow S).\]

Note, however, that we can not make the $\beta$-reduction axiom polymorphic in $S$.  One might na\"ively consider the axiom
\begin{equation}\label{DoubleBeta}
    (\mathsf{beta}:(s:\square)\rightarrow(r:\square)\rightarrow(x:r)\rightarrow(\lambda xrs)x\rightarrow s),
\end{equation}
but when $x\in\mathsf{F}(S)$ (which is the situation we really care about), the substitution that would result from the \eqref{App} rule would end up changing the bound variable $x$.  In other words, given $\Gamma$ containing \eqref{DoubleBeta} and satisfying $\Gamma\vdash(S:\square)$, the \eqref{App} rule would result in $\Gamma\vdash(\mathsf{beta} S:(r:\square)\rightarrow(x:r)\rightarrow(\lambda yrS)x\rightarrow S)$, for some $y\neq x$.

This might be considered as one argument in favour of taking $\beta$-reduction as an inference rule rather than an axiom (as is usually done in pure type systems, for example) even though it could break the decidability of type checking, as mentioned above, or at least make it more difficult to prove.  A possibly more compelling argument would be that, in the presence of a $\beta$-reduction rule like
\begin{equation}\label{BasicBeta}
    (S:Q),(T:\lambda xQPS)\vdash(T:P_{[S/x]}),
\end{equation}
we can modify the \eqref{App} rule to avoid substitution, making it
\[(F:\pi xQR),(G:\rho F),(S:Q)\vdash(GS:FS).\]
Indeed, taking $F$ to be $\lambda xQP$ above and then applying \eqref{BasicBeta} would then yield the original \eqref{App} rule.

We would naturally like polymorphic notions to apply to sets of functions as well, i.e.~to terms of the form $\rho F$, for some $F\in\mathsf{T}$.  In an individual instance we could achieve this by simply adding $(\rho F:\square)$ as an axiom, so that our new polymorphic notions can indeed by applied on $\rho F$.  But again it would be nice if we did not have to add a new axiom like this for every new term $F$ that we wish to work with.  What we would really like is to be able to type $\rho$ as $(R\rightarrow\square)\rightarrow\square$, for all terms $R$.  But of course this does not make sense because syntactically $\rho$ is not even a properly formed term on its own.

\subsection{Logic}\label{Logic}

Before moving on, it is perhaps also worth mentioning the axioms that we do \emph{not} need, namely most of the logical tautologies that are usually taken as axioms in (0th order) sentential logic or (1st order) predicate logic.  For example, for any $R,S\in\mathsf{T}$, the tautology $R\rightarrow (R\rightarrow S)\rightarrow S$ is already witnessed by a term $T$, i.e.~we can construct $T\in\mathsf{T}$ such that $\emptyset\vdash(T:R\rightarrow (R\rightarrow S)\rightarrow S)$.  Indeed, taking any distinct $x,y\in\mathsf{V}\setminus\mathsf{F}(RS)$, we see that
\begin{align*}
    (x:R),(y:R\rightarrow S)&\vdash(yx:S),\text{ by \eqref{App}},\\
    \text{so}\quad(x:R)&\vdash(\lambda y(R\rightarrow S)(yx):(R\rightarrow S)\rightarrow S),\text{ by \eqref{Ab}},\\
    \text{and hence}\quad\emptyset&\vdash(\lambda xR\lambda y(R\rightarrow S)(yx):R\rightarrow(R\rightarrow S)\rightarrow S),
\end{align*}
again by \eqref{Ab}.  In particular, taking $S={\bot}$ yields $T\in\mathsf{T}$ with
\[\emptyset\vdash(T:R\rightarrow\neg\neg R).\]

However, in general we do not have any witness to the converse, i.e.~we may not have any $S\in\mathsf{T}$ with $\emptyset\vdash(S:\neg\neg R\rightarrow R)$.  In fact, if we are looking at the corresponding polymorphic term
\begin{equation}\label{ChoiceLEM}
    (x:\square)\rightarrow\neg\neg x\rightarrow x
\end{equation}
then we see that any $\phi\in\llbracket(x:\square)\rightarrow\neg\neg x\rightarrow x\rrbracket$ is a function such that $\phi(\emptyset)=\emptyset$ while $\phi(r)(\emptyset)\in r$, for all $r\in\llbracket\square\rrbracket\setminus\{\emptyset\}$.  Put another way, for each non-empty $r\in\llbracket\square\rrbracket$, the function $\phi$ is choosing an element of $r$.  So if we are considering $\square$ as denoting a universe of sets then \eqref{ChoiceLEM} is naturally viewed as the axiom of choice, while if we are instead taking $\square$ to denote a default set of propositions then \eqref{ChoiceLEM} is a form of the law of the excluded middle.  This could be viewed as a type theoretic manifestation of Diaconescu's theorem.

\subsection{Improvements}\label{Improvements}

Motivated by the remarks in \S\ref{Polymorphism}, it is natural to build a new type system with inbuilt polymorphism as follows.  From the outset, we distinguish a constant $\square$ to denote a universe of sets we wish to apply polymorphic definitions to.  We then distinguish another constant $\mathsf{p}$ to act as a product operator on the terms of $\square$, replacing all formal product terms in the inference rules with terms involving $\mathsf{p}$ instead.  To ensure that polymorphism also applies to product terms, we can type $\mathsf{p}$ as $(r:\square)\rightarrow(r\rightarrow\square)\rightarrow\square$.  The only issue is that the $\rightarrow$ symbols here should also be abbreviations for product terms, but unfolding them using the same product symbol $\mathsf{p}$ would only be valid if $\square:\square$, which is not satisfied by any interpretation (because sets are $\in$-well-founded, by the axiom of foundation in ZFC).  Thus we are naturally led to distinguishing another constant $\square'$ to denote another universe with $\square:\square'$ as well as another product operator constant $\mathsf{p}':(r:\square')\rightarrow(r\rightarrow\square')\rightarrow\square'$ which we can use to unfold the above arrows.  But to unfold these new arrows in turn requires another constant $\square''$ for an even higher universe, etc..  In the end we are led to distinguishing a countably infinite collection of constants to denote universes and products between them.  It is also natural then to take the lowest universe as a default set of propositions.

Our new system will also have (sub-)reduction statements and rules allowing us to replace the corresponding terms in typing statements.  We will usually take all $\beta$-sub-reduction statements as axioms and accordingly our \eqref{App} rule will be replaced with a substitution-free version, as outlined above.  The computation rules usually associated to inductive definitions can also be handled by adding them as (sub-)reduction statements.  Of course, the more of these statements we add, the more likely type-checking will become undecidable or difficult to prove.  But given the prevalence of reduction/computation rules resulting from `definitional equality' in the type systems underlying modern proof assistants, it seems that this is a price many are willing to pay.

\newpage
\part{A System with Polymorphic Product Operators}\label{Part2}

The preliminaries of our next system will be much like the first.  Thus we will be more terse and just review what is needed, pointing out the crucial differences -- see \Cref{Part1} for more details.

\section{The Language}

The language of our second system is just like the first minus the $\rho$-terms.  Specifically, the alphabet has five symbols denoted by
\[\mathsf{A}=\{\mathsf{v},\mathsf{c},{'},\beta,\lambda\}.\]
The strings we are interested in are defined by
\begin{align*}
    \tag{Constants}\mathsf{C}&::= \mathsf{c}\ |\ \mathsf{C}'\\
    \tag{Variables}\mathsf{V}&::= \mathsf{v}\ |\ \mathsf{V}'\\
    \tag{Terms}\mathsf{T}&::= \mathsf{C}\ |\ \mathsf{V}\ |\ \beta\mathsf{TT}\ |\ \lambda\mathsf{VTT}
\end{align*}
So $\mathsf{C}=\{\mathsf{c},\mathsf{c}',\mathsf{c}'',\ldots\}$, $\mathsf{V}=\{\mathsf{v},\mathsf{v}',\mathsf{v}'',\ldots\}$ and $\mathsf{T}$ is minimal with
\[\mathsf{C}\cup\mathsf{V}\cup\beta\mathsf{TT}\cup\lambda\mathsf{VTT}\subseteq\mathsf{T}.\]
Let $\mathsf{T}^0=\mathsf{C}\cup\mathsf{V}$ and $\mathsf{T}^{n+1}=\mathsf{T}^0\cup\beta\mathsf{T^nT^n}\cup\lambda\mathsf{VT^nT^n}$ so $\mathsf T=\bigcup_{n\in\omega}\mathsf T^n$.

As outlined in \S\ref{Improvements}, from the outset we distinguish a non-repeating sequence of constants $(\mathsf{u}_n)_{n\in\omega}\subseteq\mathsf{C}$ that we call \emph{sorts} to denote universes of domains of polymorphic functions.  We denote these sorts by
\[\tag{Sorts}\mathsf{U}=\{\mathsf{u}_n\mathrel{|}n\in\omega\}\subseteq\mathsf{C}.\]
For each $m,n\in\omega$, we further distinguish a unique constant $\mathsf{p}_m^n\in\mathsf{C}$ that is meant to denote the polymorphic product operator between the corresponding universes.  We denote these operators by
\[\tag{Operators}\mathsf{P}=\{\mathsf{p}^n_m\mathrel{|}m,n\in\omega\}\subseteq\mathsf{C}.\]

\begin{remark}
While the original systems of Barendregt's $\lambda$-cube (see \cite[\S13.E]{HindleySeldin2008}) distinguished only two sorts $*$ and $
\square$, modern pure type systems often distinguish a sequence of them as done here, such as Coquand and Paulin's Calculus of Inductive Constructions (see \cite{CoquandPaulin1990}) and Luo's Extended Calculus of Constructions (see \cite{Luo1994}).  We are just taking the next natural step and further distinguishing constants to use as polymorphic product operators on the sorts as well.
\end{remark}

Sequences of $\beta$'s at the start of a term will often be omitted as usual, i.e.~for all $R_0,\ldots,R_n\in\mathsf{T}$, we define the abbreviation
\[R_0\ldots R_n:=\beta^nR_0\ldots R_n.\]
Define $\mathsf{F}\in\mathcal{P}(\mathsf{V})^\mathsf{T}$ so that, for all $a\in\mathsf{C}$, $x\in\mathsf{V}$ and $R,S\in\mathsf{T}$,
\begin{align*}
    \mathsf{F}(a)&=\emptyset.\\
    \mathsf{F}(x)&=\{x\}.\\
    \mathsf{F}(RS)&=\mathsf{F}(R)\cup\mathsf{F}(S).\\
    \mathsf{F}(\lambda xRS)&=\mathsf{F}(R)\cup(\mathsf{F}(S)\setminus\{x\}).
\end{align*}
The variables in $\mathsf{F}(S)$ are said to be \emph{free}.  Abbreviations for product terms are then defined as follows
\begin{align*}
    \pi_m^nxRS&:=\mathsf{p}_m^nR\lambda xRS.\\
    (x:R)\rightarrow_m^nS&:=\pi_m^nxRS.\\
    R\rightarrow_m^nS&:=\pi_m^nxRS,\text{ where }x\notin\mathsf{F}(S).
\end{align*}
We will even drop the indices when they can be inferred from context, e.g.~writing $\mathsf{p}RG$, $\pi xRS$ and $R\rightarrow S$ for $\mathsf{p}_m^nRG$, $\pi_m^nxRS$ and $R\rightarrow_m^nS$.

For all $t\in\mathsf{T}^0$, $R,S,T\in\mathsf{T}$ and $x,y\in\mathsf{V}$, define
\begin{align*}
    t_{[T/x]}&:=\begin{cases}T&\text{if }t=x\\t&\text{if }t\in\mathsf{T}^0\setminus\{x\}.\end{cases}\\
    (RS)_{[T/x]}&:=R_{[T/x]}S_{[T/x]}.\\
    (\lambda yRS)_{[T/x]}&:=\begin{cases}\lambda yR_{[T/x]}S&\text{if }y=x\text{ or }x\notin\mathsf{F}(S)\\\lambda zR_{[T/x]}S_{[z/y][T/x]}&\text{otherwise, where }z\notin\mathsf{F}(\lambda yTS)\end{cases}
\end{align*}
As in \Cref{FreeChar}, as long as we choose $z$ appropriately,
\[S_{[T/x]}\neq S\qquad\Leftrightarrow\qquad T\neq x\in\mathsf{F}(S).\]

\section{Interpretations}

Denote the class of all sets by $\mathsf{Set}$.  We define \emph{interpretations} to be functions $\llbracket\cdot\rrbracket_\psi\in\mathsf{Set}^\mathsf{T}$, for $\psi\subseteq\mathsf{Set}\times(\mathsf{T}^0\setminus\mathsf{P})$ (where $\llbracket\cdot\rrbracket=\llbracket\cdot\rrbracket_\emptyset)$ such that, for all $t\in\mathsf{T}^0$, $R,S\in\mathsf{T}$, $x\in\mathsf{V}$ and $m,n\in\omega$,
\begin{align*}
    \llbracket t\rrbracket_\psi&=\begin{cases}\psi(t)&\text{if }t\in\mathrm{dom}(f)\\\llbracket t\rrbracket&\text{otherwise}.\end{cases}\\
    \llbracket RS\rrbracket&=\llbracket R\rrbracket(\llbracket S\rrbracket).\\
    \llbracket\lambda xRS\rrbracket&=\{\langle\llbracket S\rrbracket_{\langle r,x\rangle},r\rangle\mathrel{|}r\in\llbracket R\rrbracket\}.\\
    \llbracket\mathsf{p}_m^n\rrbracket&=\{\langle\{\langle\prod \phi,\phi\rangle\mathrel{|}\phi\in\llbracket\mathsf{u}_n\rrbracket^D\},D\rangle\mathrel{|}D\in\llbracket\mathsf{u}_m\rrbracket\}.
\end{align*}
So interpretations can take arbitrary values on $\mathsf{T}^0\setminus\mathsf{P}$ as before, but this time their values on the operators $\mathsf{P}$ are determined by their values on the sorts $\mathsf{U}$.  Specifically, for all $m,n\in\omega$, $\llbracket\mathsf{p}_m^n\rrbracket$ is the function on $\llbracket\mathsf{u}_m\rrbracket$ such that $\llbracket\mathsf{p}_m^n\rrbracket(D)$ is itself a function, for each $D\in\llbracket\mathsf{u}_m\rrbracket$, defined on each function $\phi\in\llbracket\mathsf{u}_n\rrbracket^D$ by
\[\llbracket\mathsf{p}_m^n\rrbracket(D)(\phi)=\prod\phi.\]

Unlike before, here we will be concerned not just with the actual interpretation of terms but whether the terms are \emph{well-formed} with respect to an interpretation, meaning that only functions are applied to other sets and only to those lying in their domain.  More precisely, let $\mathsf{Fun}$ denote the class of functions and, given any interpretation $\llbracket\cdot\rrbracket$, define $\mathsf{wf}_{\llbracket\cdot\rrbracket}\subseteq\mathsf{T}$ as the smallest set of terms such that $\mathsf{T}^0\subseteq\mathsf{wf}_{\llbracket\cdot\rrbracket}$,
\begin{align*}
    \{FS\mathrel{|}F,S\in\mathsf{wf}_{\llbracket\cdot\rrbracket},\ \llbracket F\rrbracket\in\mathsf{Fun}\text{ and }\llbracket S\rrbracket\in\mathrm{dom}\llbracket F\rrbracket\}&\subseteq\mathsf{wf}_{\llbracket\cdot\rrbracket}\quad\text{and}\\
    \{\lambda xRS\mathrel{|}R\in\mathsf{wf}_{\llbracket\cdot\rrbracket}\text{ and }\forall r\in\llbracket R\rrbracket\ (S\in\mathsf{wf}_{\llbracket\cdot\rrbracket_{\langle r,x\rangle}})\}&\subseteq\mathsf{wf}_{\llbracket\cdot\rrbracket}.
\end{align*}
Equivalently, define a unary relation $\llbracket\cdot\rrbracket^\mathsf{wf}$ on terms $\mathsf{T}$ such that
\begin{align*}
    \llbracket FS\rrbracket^\mathsf{wf}\quad&\Leftrightarrow\quad\llbracket F\rrbracket^\mathsf{wf},\llbracket S\rrbracket^\mathsf{wf},\ \llbracket F\rrbracket\in\mathsf{Fun}\text{ and }\llbracket S\rrbracket\in\mathrm{dom}\llbracket F\rrbracket,\\
    \llbracket\lambda xRS\rrbracket^\mathsf{wf}\quad&\Leftrightarrow\quad\llbracket R\rrbracket^\mathsf{wf}\text{ and }\forall r\in\llbracket R\rrbracket\ (\llbracket S\rrbracket_{\langle r,x\rangle}^\mathsf{wf})
\end{align*}
and $\llbracket t\rrbracket^\mathsf{wf}$, for all $t\in\mathsf{T}^0$.  Then $\mathsf{wf}_{\llbracket\cdot\rrbracket}=\{R\mathrel{|}\llbracket R\rrbracket^\mathsf{wf}\}$.

\begin{proposition}\label{wfSub}
    For all $S,T\in\mathsf{T}$ and $x\in\mathsf{V}$,
    \[\llbracket T\rrbracket^\mathsf{wf}\text{ and }\llbracket S\rrbracket_{\langle\llbracket T\rrbracket,x\rangle}^\mathsf{wf}\qquad\Rightarrow\qquad\llbracket S_{[T/x]}\rrbracket^\mathsf{wf}.\]
    The converse also holds when $x\in\mathsf{F}(S)$.
\end{proposition}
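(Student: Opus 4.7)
The plan is a structural induction on $S$ (equivalently, on the least $n$ with $S\in\mathsf{T}^n$), run in parallel with the analogue of \Cref{SubstitutivityProp} for the present system. The essential auxiliary fact I would lean on, beyond the inductive hypothesis, is the counterpart to \eqref{Substitutivity}, namely $\llbracket S_{[T/x]}\rrbracket=\llbracket S\rrbracket_{\langle\llbracket T\rrbracket,x\rangle}$, which is needed to transfer the side conditions ``$\llbracket F\rrbracket\in\mathsf{Fun}$ and $\llbracket S\rrbracket\in\mathrm{dom}\llbracket F\rrbracket$'' in the definition of $\llbracket\cdot\rrbracket^\mathsf{wf}$ between the interpretation $\llbracket\cdot\rrbracket_{\langle\llbracket T\rrbracket,x\rangle}$ and the interpretation $\llbracket\cdot\rrbracket$ applied to the substituted term.

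For the forward direction, the base case $S\in\mathsf{T}^0$ is immediate: if $S=x$ then $S_{[T/x]}=T$ and the hypothesis gives $\llbracket T\rrbracket^\mathsf{wf}$, while if $S\neq x$ then $S_{[T/x]}=S\in\mathsf{T}^0$ is well-formed trivially. For $S=FS'$, unfolding the $\mathsf{wf}$-clause for $\llbracket FS'\rrbracket_{\langle\llbracket T\rrbracket,x\rangle}$ yields well-formedness of $F$ and $S'$ under the modified assignment plus a function/domain condition; two applications of the inductive hypothesis give $\llbracket F_{[T/x]}\rrbracket^\mathsf{wf}$ and $\llbracket S'_{[T/x]}\rrbracket^\mathsf{wf}$, and Substitutivity translates the function/domain condition across, producing $\llbracket (FS')_{[T/x]}\rrbracket^\mathsf{wf}$.

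The $\lambda$-case splits into the three sub-cases of the substitution definition, exactly as in \Cref{SubstitutivityProp}. When $y=x$, the inner one-point modification $\langle r,x\rangle$ cancels the outer $\langle\llbracket T\rrbracket,x\rangle$ and no substitution touches $S'$, so the step is direct. When $y\neq x\notin\mathsf{F}(S')$, I would commute the two one-point modifications and invoke \Cref{FreeProp} to strip off the irrelevant one. The awkward case is $y\neq x\in\mathsf{F}(S')$, where a fresh $z\notin\mathsf{F}(\lambda yTS')$ is introduced and $(\lambda yRS')_{[T/x]}=\lambda zR_{[T/x]}S'_{[z/y][T/x]}$; here I would mimic the identity $\llbracket S'_{[z/y]}\rrbracket_{\langle r,z\rangle}=\llbracket S'\rrbracket_{\langle r,y\rangle}$ used in \Cref{AlphaEqual,SubstitutivityProp}, chain it with Substitutivity at $x$, and use $\llbracket T\rrbracket=\llbracket T\rrbracket_{\langle r,z\rangle}$ (since $z\notin\mathsf{F}(T)$) to make everything line up. This bookkeeping is the main obstacle, but it is essentially a carbon copy of the corresponding step in \Cref{SubstitutivityProp}, just tracking the proposition $\llbracket\cdot\rrbracket^\mathsf{wf}$ instead of set-valued equality.

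For the converse under the hypothesis $x\in\mathsf{F}(S)$, a second structural induction delivers both conclusions simultaneously. The key observation is that $x\in\mathsf{F}(S)$ forces at least one leaf substitution $x\mapsto T$ to actually occur inside $S_{[T/x]}$, so $\llbracket S_{[T/x]}\rrbracket^\mathsf{wf}$ automatically entails $\llbracket T\rrbracket^\mathsf{wf}$ by downward closure of $\llbracket\cdot\rrbracket^\mathsf{wf}$ under subterms (which itself is a short side lemma by induction on $S$). Given $\llbracket T\rrbracket^\mathsf{wf}$, each inductive step of the forward direction is reversible, because Substitutivity is an equality rather than an inclusion, so the function/domain condition translates back across the substitution just as readily; in the $\lambda$-case with bound-variable renaming, $x\in\mathsf{F}(S')$ and $x\in\mathsf{F}(\lambda yRS')$ coincide precisely in the relevant sub-case, which is what keeps the induction going.
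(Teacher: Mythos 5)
Your forward direction is essentially the paper's proof: the paper disposes of $\mathsf{T}^0$ in one sentence and then says the induction proceeds ``like in the proof of \Cref{SubstitutivityProp}'', and your expansion of that induction -- using the analogue of \eqref{Substitutivity} to carry the conditions $\llbracket F\rrbracket\in\mathsf{Fun}$ and $\llbracket S\rrbracket\in\mathrm{dom}\llbracket F\rrbracket$ across the substitution, and splitting the $\lambda$-case into the same three sub-cases with the same commutation of one-point modifications -- is exactly the intended argument.

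The converse part of your proposal, however, rests on a false side lemma. Well-formedness is \emph{not} downward closed under subterms: the clause for $\lambda$-terms only demands $\llbracket S\rrbracket^\mathsf{wf}_{\langle r,x\rangle}$ for $r\in\llbracket R\rrbracket$, so when $\llbracket R\rrbracket=\emptyset$ the condition on the body is vacuous and $\llbracket\lambda xRS\rrbracket^\mathsf{wf}$ tells you nothing about $S$. Concretely, take $y\neq x$, a constant $c$ with $\llbracket c\rrbracket=\emptyset$, and $S=\lambda ycx$, so that $x\in\mathsf{F}(S)$ and $S_{[T/x]}=\lambda ycT$; then $\llbracket S_{[T/x]}\rrbracket^\mathsf{wf}$ holds for \emph{every} $T$, while $\llbracket T\rrbracket^\mathsf{wf}$ fails for, say, $T=\beta dd$ with $\llbracket d\rrbracket=\{\emptyset\}\notin\mathsf{Fun}$. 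So your step ``$\llbracket S_{[T/x]}\rrbracket^\mathsf{wf}$ entails $\llbracket T\rrbracket^\mathsf{wf}$'' breaks down whenever the free occurrences of $x$ sit only inside $\lambda$-bodies over empty domains -- and this example in fact shows the converse of the proposition as literally stated is itself false, a point the paper's one-line proof glosses over. Note that the paper only ever invokes the forward direction (in \Cref{HeadAlphaModel,HeadBetaModel}); what does survive is the weaker reverse implication that retains $\llbracket T\rrbracket^\mathsf{wf}$ as a hypothesis and concludes $\llbracket S\rrbracket^\mathsf{wf}_{\langle\llbracket T\rrbracket,x\rangle}$ from $\llbracket S_{[T/x]}\rrbracket^\mathsf{wf}$, for which your ``reverse each inductive step via Substitutivity'' strategy does work (the empty-domain sub-case being vacuous on both sides).
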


\begin{proof}
    This is certainly true for $S\in\mathsf{T}^0$ and can then be extended to all $\mathsf{T}^n$ by induction like in the proof of \Cref{SubstitutivityProp}.
\end{proof}

Given an interpretation $\llbracket\cdot\rrbracket$, we define a \emph{typing relation} $\in_{\llbracket\cdot\rrbracket}$ on $\mathsf{T}$ as before except that we also require the terms to be well-formed, i.e.
\[S\in_{\llbracket\cdot\rrbracket}P\qquad\Leftrightarrow\qquad\llbracket S\rrbracket^\mathsf{wf},\llbracket P\rrbracket^\mathsf{wf}\text{ and }\llbracket S\rrbracket\in\llbracket P\rrbracket.\]
Equivalently, $\in_{\llbracket\cdot\rrbracket}$ is a unary relation on \emph{typing statements} given by
\[\tag{Typing Statements}\mathsf{S}_:=\mathsf{T}\times\{:\}\times\mathsf{T}.\]
So this time typing statements are pairs of actual terms, not $\alpha$-conversion classes as before, together with a formal $:$ symbol added in the middle to distinguish them from (sub-)reduction statements below.  Again a particular typing statement $\langle S,:,P\rangle$ will usually be abbreviated to $S:P$ and read as `$S$ is of type $P$', referring to $S$ and $P$ as the \emph{subject} and \emph{predicate} of the typing statement.  Viewing $\in_{\llbracket\cdot\rrbracket}$ as a unary relation on typing statements, let us again denote it by $\llbracket\cdot\rrbracket$, i.e.~for all $S,P\in\mathsf{T}$,
\[\llbracket S:P\rrbracket\qquad\Leftrightarrow\qquad S\mathrel{\in_{\llbracket\cdot\rrbracket}}P\qquad\Leftrightarrow\qquad\llbracket S\rrbracket^\mathsf{wf},\llbracket P\rrbracket^\mathsf{wf}\text{ and }\llbracket S\rrbracket\in\llbracket P\rrbracket.\]
Again if $\llbracket S:P\rrbracket$ holds, we say that the typing statement $(S:P)$ is \emph{satisfied} by the interpretation $\llbracket\cdot\rrbracket$.

\begin{proposition}\label{RarrowSwf}
    For any interpretation $\llbracket\cdot\rrbracket$, $x\in\mathsf{V}$ and $R,S\in\mathsf{T}$,
    \[\llbracket(x:R)\rightarrow_m^nS\rrbracket^\mathsf{wf}\quad\Leftrightarrow\quad\llbracket R:\mathsf{u}_m\rrbracket\text{ and }\llbracket S:\mathsf{u}_n\rrbracket_{\langle r,x\rangle},\text{ for all }r\in\llbracket R\rrbracket.\]
\end{proposition}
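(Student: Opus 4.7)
The plan is to unfold the abbreviation and apply the definitions of well-formedness and interpretation directly to the two $\beta$-applications that appear. Recall $(x:R)\rightarrow_m^nS := \mathsf{p}_m^nR\lambda xRS$, which in formal notation is the iterated application $\beta\beta\mathsf{p}_m^nR\lambda xRS$. So I would first invoke the definition of $\llbracket\cdot\rrbracket^\mathsf{wf}$ on the outer $\beta$-term, giving that $\llbracket(x:R)\rightarrow_m^nS\rrbracket^\mathsf{wf}$ holds iff the following four conditions all hold: $\llbracket\beta\mathsf{p}_m^nR\rrbracket^\mathsf{wf}$, $\llbracket\lambda xRS\rrbracket^\mathsf{wf}$, $\llbracket\beta\mathsf{p}_m^nR\rrbracket\in\mathsf{Fun}$, and $\llbracket\lambda xRS\rrbracket\in\mathrm{dom}\llbracket\beta\mathsf{p}_m^nR\rrbracket$.

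Next I would analyze $\llbracket\beta\mathsf{p}_m^nR\rrbracket^\mathsf{wf}$ by unfolding once more. Since $\mathsf{p}_m^n\in\mathsf{T}^0$ we have $\llbracket\mathsf{p}_m^n\rrbracket^\mathsf{wf}$ automatically, and by the defining formula for $\llbracket\mathsf{p}_m^n\rrbracket$ the interpretation is a function with $\mathrm{dom}\llbracket\mathsf{p}_m^n\rrbracket=\llbracket\mathsf{u}_m\rrbracket$. Thus $\llbracket\beta\mathsf{p}_m^nR\rrbracket^\mathsf{wf}$ collapses to $\llbracket R\rrbracket^\mathsf{wf}$ together with $\llbracket R\rrbracket\in\llbracket\mathsf{u}_m\rrbracket$, which combined with the trivial $\llbracket\mathsf{u}_m\rrbracket^\mathsf{wf}$ is exactly the typing statement $\llbracket R:\mathsf{u}_m\rrbracket$. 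Moreover, when this holds, $\llbracket\beta\mathsf{p}_m^nR\rrbracket=\llbracket\mathsf{p}_m^n\rrbracket(\llbracket R\rrbracket)=\{\langle\prod\phi,\phi\rangle\mid\phi\in\llbracket\mathsf{u}_n\rrbracket^{\llbracket R\rrbracket}\}$ is indeed a function with domain $\llbracket\mathsf{u}_n\rrbracket^{\llbracket R\rrbracket}$, so the third condition above is automatic.

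Finally I would unpack the remaining two conditions. By the definition of $\llbracket\cdot\rrbracket^\mathsf{wf}$, $\llbracket\lambda xRS\rrbracket^\mathsf{wf}$ is equivalent to $\llbracket R\rrbracket^\mathsf{wf}$ together with $\llbracket S\rrbracket^\mathsf{wf}_{\langle r,x\rangle}$ for every $r\in\llbracket R\rrbracket$. And $\llbracket\lambda xRS\rrbracket=\{\langle\llbracket S\rrbracket_{\langle r,x\rangle},r\rangle\mid r\in\llbracket R\rrbracket\}$ lies in $\llbracket\mathsf{u}_n\rrbracket^{\llbracket R\rrbracket}$ precisely when $\llbracket S\rrbracket_{\langle r,x\rangle}\in\llbracket\mathsf{u}_n\rrbracket$ for every $r\in\llbracket R\rrbracket$ (noting that $\llbracket\mathsf{u}_n\rrbracket_{\langle r,x\rangle}=\llbracket\mathsf{u}_n\rrbracket$ since $\mathsf{u}_n\in\mathsf{C}$). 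Combining these with the trivial fact $\llbracket\mathsf{u}_n\rrbracket^\mathsf{wf}_{\langle r,x\rangle}$, the conjunction is exactly $\llbracket S:\mathsf{u}_n\rrbracket_{\langle r,x\rangle}$ for every $r\in\llbracket R\rrbracket$.

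Assembling the equivalences above yields the required biconditional. There is no real obstacle here; the proof is a mechanical unfolding of definitions. The only minor care needed is to keep track of which pieces of each intermediate condition feed the well-formedness half and which feed the membership half of the two typing statements on the right-hand side, and to observe that interpretations of constants like $\mathsf{u}_m,\mathsf{u}_n$ are unaffected by the modification $\langle r,x\rangle$ so that the inner typing statement can be written without any further substitution.
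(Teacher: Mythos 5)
Your proof is correct and follows essentially the same route as the paper's: unfold $(x:R)\rightarrow_m^nS$ to the application $\mathsf{p}_m^nR(\lambda xRS)$, apply the definition of well-formedness to the two nested applications, identify $\llbracket\mathsf{p}_m^nR\rrbracket^\mathsf{wf}$ with $\llbracket R:\mathsf{u}_m\rrbracket$, and use $\mathrm{dom}\llbracket\mathsf{p}_m^nR\rrbracket=\llbracket\mathsf{u}_n\rrbracket^{\llbracket R\rrbracket}$ to turn the remaining conditions into $\llbracket S:\mathsf{u}_n\rrbracket_{\langle r,x\rangle}$ for all $r\in\llbracket R\rrbracket$. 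You are in fact slightly more explicit than the paper in checking the $\llbracket\beta\mathsf{p}_m^nR\rrbracket\in\mathsf{Fun}$ clause and the invariance of $\llbracket\mathsf{u}_n\rrbracket$ under the modification at $x$, but the argument is the same.
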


\begin{proof}
    First note that $(x:R)\rightarrow_m^nS=\pi_m^nxRS=\mathsf{p}_m^nR(\lambda xRS)$ and
    \[\llbracket\mathsf{p}_m^nR(\lambda xRS)\rrbracket^\mathsf{wf}\ \Leftrightarrow\ \llbracket\mathsf{p}_m^nR\rrbracket^\mathsf{wf},\ \llbracket\lambda xRS\rrbracket^\mathsf{wf}\text{ and }\llbracket\lambda xRS\rrbracket\in\mathrm{dom}\llbracket\mathsf{p}_m^nR\rrbracket.\]
    But $\llbracket\mathsf{p}_m^nR\rrbracket^\mathsf{wf}$ means $\llbracket R\rrbracket^\mathsf{wf}$ and $\llbracket R\rrbracket\in\mathrm{dom}\llbracket\mathsf{p}_m^n\rrbracket=\llbracket\mathsf{u}_m\rrbracket$, i.e.~$\llbracket R:\mathsf{u}_m\rrbracket$.  In this case, $\llbracket\lambda xRS\rrbracket^\mathsf{wf}$ is equivalent to $\llbracket S\rrbracket_{\langle r,x\rangle}^\mathsf{wf}$, for all $r\in\llbracket R\rrbracket$.  Also then $\mathrm{dom}\llbracket\mathsf{p}_m^nR\rrbracket=\llbracket\mathsf{u}_n\rrbracket^{\llbracket R\rrbracket}$ and so $\llbracket\lambda xRS\rrbracket\in\mathrm{dom}\llbracket\mathsf{p}_m^nR\rrbracket$ precisely when $\llbracket S\rrbracket_{\langle r,x\rangle}\in\llbracket\mathsf{u}_n\rrbracket$ and hence $\llbracket S:\mathsf{u}_n\rrbracket_{\langle r,x\rangle}$, for all $r\in\llbracket R\rrbracket$.
\end{proof}

As mentioned in \S\ref{Improvements}, we are primarily interested in interpretations satisfying extra conditions corresponding to the `axioms' and `rules' of pure type systems (see \cite[\S5]{Barendregt1992}), conditions like $\llbracket\mathsf{u}_n\rrbracket\in\llbracket\mathsf{u}_{n+1}\rrbracket$ and $\llbracket\mathsf{p}_m^n\rrbracket(D)(\phi)\in\llbracket\mathsf{u}_{m\smallsmile n}\rrbracket$, for all $D\in\llbracket\mathsf{u}_m\rrbracket$ and $\phi\in\llbracket\mathsf{u}_n\rrbracket^D$, where $\smallsmile$ is some binary operation on $\omega$.  In other words, we are interested in interpretations $\llbracket\cdot\rrbracket$ satisfying typing statements like
\[(\mathsf{u}_n:\mathsf{u}_{n+1})\qquad\text{and}\qquad(\mathsf{p}_m^n:(x:\mathsf{u}_m)\rightarrow(x\rightarrow\mathsf{u}_n)\rightarrow\mathsf{u}_{m\smallsmile n}).\]

In keeping with idea that the sorts should denote `universes' in which we can specify various mathematical structures (see \S\ref{Specifications} below), we will also generally want each $\llbracket\mathsf{u}_n\rrbracket$ to satisfy as much of ZFC as possible.  If we are willing to accept the existence of infinitely many inaccessible cardinals then we can take each $\llbracket\mathsf{u}_n\rrbracket$ to satisfy all of ZFC, like in \cite{Carneiro2019}, for example.  However, we can avoid any large cardinal assumptions and still take each $\llbracket\mathsf{u}_n\rrbracket$ to satisfy ZFC$-$P (i.e.~ZFC without the power set axiom), which suffices for most specifications.  This is done by taking universes to be sets of the form $\mathsf{H}(\kappa)$, the family of all sets of hereditary cardinality less than $\kappa$ (see \cite{Kunen2011}).

The one exception here is the bottom sort $\mathsf{u}_0$ which we are taking to denote a default set of propositions, as suggested in \S\ref{Improvements}.  Accordingly, it is not so important for $\llbracket\mathsf{u}_0\rrbracket$ to satisfy any axioms of ZFC, rather we just want $\llbracket\mathsf{u}_0\rrbracket$ to include the default truth values $\emptyset$ (false) and $\{\emptyset\}$ (true), as in \S\ref{Abbreviations}.  If we want all `proofs' of a particular proposition to be identical, then we should also require each member of $\llbracket\mathsf{u}_0\rrbracket$ to contain at most one element.  On the other hand, if we want propositions to be identified with their truth values then we should make sure there is at most one non-empty member of $\llbracket\mathsf{u}_0\rrbracket$.  This all motivates \Cref{CanonicalInterpretation}.

First let us call a sequence $(\kappa_n)$ of regular cardinals a \emph{regular sequence} if, for all $n\in\omega$ and cardinals $\mu$ and $\nu$,
\[\mu<\kappa_n\quad\text{and}\quad\nu<\kappa_{n+1}\qquad\Rightarrow\qquad|\nu^\mu|<\kappa_{n+1}.\]
For example, we could take $\kappa_0=\aleph_0$ and just define $\kappa_{n+1}=|\kappa_n^{\kappa_n}|^+$, for all $n\in\omega$ (here $|S|$ denotes the cardinality of any set $S$ and $\mu^+$ denotes the successor of any cardinal $\mu$).  If the generalised continuum hypothesis holds, then we could also just take $\kappa_n=\aleph_n$, for all $n\in\omega$.  Or if there exists infinitely many inaccessible cardinals then we could simply let $(\kappa_n)$ be any strictly increasing sequence of them.

\begin{definition}\label{CanonicalInterpretation}
    We call an interpretation $\llbracket\cdot\rrbracket$ \emph{canonical} if we have a regular sequence $(\kappa_n)$ such that, for all $n\in\omega$,
    \[\{\emptyset,\{\emptyset\}\}\subseteq\llbracket\mathsf{u}_0\rrbracket\subseteq\mathsf{H}(\kappa_0)\qquad\text{and}\qquad\llbracket\mathsf{u}_n\rrbracket=\mathsf{H}(\kappa_n),\text{ for all }n\geq1.\]
    We also call an interpretation $\llbracket\cdot\rrbracket$
    \begin{enumerate}
        \item \emph{proof-irrelevant} if $|s|\leq1$, for all $s\in\llbracket\mathsf{u}_0\rrbracket$.
        \item \emph{propositionally extensional} if $|\llbracket\mathsf{u}_0\rrbracket\setminus\{\emptyset\}|\leq1$.
    \end{enumerate}
\end{definition}

In particular, a canonical interpretation $\llbracket\cdot\rrbracket$ is both proof-irrelevant and propositionally extensional precisely when $\llbracket\mathsf{u}_0\rrbracket=\{\emptyset,\{\emptyset\}\}$.

The first basic properties of canonical interpretations are as follows.

\begin{proposition}\label{umun}
    If $\llbracket\cdot\rrbracket$ is a canonical interpretation and $m,n\in\omega$,
    \[m<n\qquad\Leftrightarrow\qquad\llbracket\mathsf{u}_m:\mathsf{u}_n\rrbracket.\]
    Moreover, $\llbracket\mathsf{u}_n\rrbracket$ is a model of $\mathrm{ZFC}-\mathrm{P}$, for all $n\geq1$.
\end{proposition}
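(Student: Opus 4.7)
The plan is to split into the biconditional $m<n \Leftrightarrow \llbracket\mathsf{u}_m:\mathsf{u}_n\rrbracket$ and the moreover clause, each of which reduces to a standard calculation with $\mathsf{H}(\kappa)$. Since $\mathsf{u}_m,\mathsf{u}_n \in \mathsf{T}^0$ are atomic, $\llbracket\mathsf{u}_m\rrbracket^\mathsf{wf}$ and $\llbracket\mathsf{u}_n\rrbracket^\mathsf{wf}$ hold trivially, so $\llbracket\mathsf{u}_m:\mathsf{u}_n\rrbracket$ is equivalent to $\llbracket\mathsf{u}_m\rrbracket \in \llbracket\mathsf{u}_n\rrbracket$. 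The proof then becomes a set-theoretic bookkeeping exercise in hereditary cardinalities.

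For the forward direction, assume $m<n$ and aim to place $\llbracket\mathsf{u}_m\rrbracket$ in $\mathsf{H}(\kappa_n)$. The regular sequence hypothesis, applied with $\nu=2$, yields $2^\mu < \kappa_{k+1}$ for all $\mu < \kappa_k$, so in particular $\kappa_{k+1} > \kappa_k$ for every $k$; iterating gives $\kappa_m < \kappa_n$. By regularity of $\kappa_n$ together with this bound, one concludes $|\mathsf{H}(\kappa_m)| = 2^{<\kappa_m} < \kappa_n$. Since every element of $\mathsf{H}(\kappa_m)$ has hereditary cardinality strictly below $\kappa_m \le \kappa_n$, the set $\mathsf{H}(\kappa_m)$ itself has hereditary cardinality $<\kappa_n$, hence lies in $\mathsf{H}(\kappa_n) = \llbracket\mathsf{u}_n\rrbracket$ (for $m\geq 1$). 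The edge case $m=0$ is handled identically: $\llbracket\mathsf{u}_0\rrbracket\subseteq\mathsf{H}(\kappa_0)$ has hereditary cardinality bounded by $|\mathsf{H}(\kappa_0)|<\kappa_1\leq\kappa_n$.

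For the converse, suppose $\llbracket\mathsf{u}_m\rrbracket \in \llbracket\mathsf{u}_n\rrbracket$. Rule out $m=n$ by the axiom of foundation, since otherwise $\llbracket\mathsf{u}_n\rrbracket \in \llbracket\mathsf{u}_n\rrbracket$. For $m>n$, note $\mathsf{H}(\kappa_m) \supseteq \kappa_m$ (as every ordinal below $\kappa_m$ has hereditary cardinality $<\kappa_m$), so $|\llbracket\mathsf{u}_m\rrbracket|\geq\kappa_m>\kappa_n$, contradicting membership in $\mathsf{H}(\kappa_n)$. The case $n=0$ is immediate from $\llbracket\mathsf{u}_0\rrbracket\subseteq\mathsf{H}(\kappa_0)$ and the fact that the latter is transitive of cardinality $<\kappa_1$, so no $\mathsf{H}(\kappa_m)$ with $m\geq 1$ can belong to it.

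For the moreover clause, note first that $\kappa_n$ is uncountable for $n\geq 1$: applying the regular sequence property with $\mu<\kappa_0$ (taking $\mu$ infinite when $\kappa_0>\aleph_0$, or observing $\kappa_1\geq\aleph_0^+$ when $\kappa_0=\aleph_0$) forces $\kappa_1\geq\aleph_1$, and strict monotonicity handles $n\geq 2$. Then invoke the classical theorem that $\mathsf{H}(\kappa)$ is a transitive model of $\mathrm{ZFC}-\mathrm{P}$ for every regular uncountable $\kappa$, citing \cite[Theorem IV.6.5]{Kunen2011}. The main obstacle in all of this is purely the cardinal arithmetic bookkeeping needed to turn the regular sequence condition into the precise inequality $|\mathsf{H}(\kappa_m)|<\kappa_n$; once that is done, the rest is routine.
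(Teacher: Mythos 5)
Your proof is correct and takes essentially the same route as the paper's: both reduce $\llbracket\mathsf{u}_m:\mathsf{u}_n\rrbracket$ to $\mathsf{H}(\kappa_m)\in\mathsf{H}(\kappa_n)$ by bounding $|\mathsf{H}(\kappa_m)|$ below $\kappa_n$ via the regular-sequence condition, rule out $m\geq n$ using foundation (you substitute a cardinality comparison for the paper's inclusion $\mathsf{H}(\kappa_n)\subseteq\mathsf{H}(\kappa_m)$ in the $m>n$ case, which is an equally valid variant), and quote the standard fact that $\mathsf{H}(\kappa)$ models $\mathrm{ZFC}-\mathrm{P}$ for regular uncountable $\kappa$. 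One caveat on two side claims: from $2^\mu<\kappa_{k+1}$ for all $\mu<\kappa_k$, Cantor only gives $\kappa_k\leq\kappa_{k+1}$, and strictness can fail when $\kappa_k$ is a strong limit (a constant sequence of inaccessibles satisfies the stated regular-sequence condition), and likewise the condition alone does not force $\kappa_1\geq\aleph_1$ (the constant sequence $\kappa_n=\aleph_0$ also satisfies it, and $\mathsf{H}(\aleph_0)$ fails Infinity); however, the paper's own proof implicitly assumes strict increase and uncountability in exactly the same places, so this is a looseness in the definition of regular sequence rather than a defect specific to your argument.
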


\begin{proof}
    First note $\mathsf{H}(\kappa)$ is a model of $\mathrm{ZFC}-\mathrm{P}$ for all uncountable regular $\kappa$, by \cite[II.2.1]{Kunen2011}, which immediately yields the last statement.

    If $m<n$ then certainly $\llbracket\mathsf{u}_m\rrbracket\subseteq\mathsf{H}(\kappa_m)\subseteq\mathsf{H}(\kappa_n)$.  By \cite[I.13.28]{Kunen2011}, $|\mathsf{H}(\kappa_m)|\leq\sup_{\mu,\nu<\kappa_m}|\nu^\mu|<\kappa_{m+1}\leq\kappa_n$ and hence $\llbracket\mathsf{u}_m\rrbracket\in\mathsf{H}(\kappa_n)=\llbracket\mathsf{u}_n\rrbracket$, by \cite[I.13.32]{Kunen2011}, i.e.~$\llbracket\mathsf{u}_m:\mathsf{u}_n\rrbracket$.  Conversely, if $\llbracket\mathsf{u}_m\rrbracket\in\llbracket\mathsf{u}_n\rrbracket$ then $m\neq n$, as sets are $\in$-well-founded, but also we can not have $m>n$ because that would imply $\llbracket\mathsf{u}_m\rrbracket=\mathsf{H}(\kappa_m)\supseteq\mathsf{H}(\kappa_n)\supseteq\llbracket\mathsf{u}_n\rrbracket\ni\llbracket\mathsf{u}_m\rrbracket$, again contradicting the fact sets are $\in$-well-founded.
\end{proof}

Canonicity also yields the following version of \Cref{RarrowSwf}.  Here $m\vee n$ denotes the maximum of $m$ and $n$.

\begin{proposition}\label{RarrowStype}
    If $\llbracket\cdot\rrbracket$ is canonical, $m,n\in\omega$, $x\in\mathsf{V}$ and $R,S\in\mathsf{T}$,
    \[\llbracket(x:R)\rightarrow_m^nS:\mathsf{u}_{(m+1)\vee n}\rrbracket\ \Leftrightarrow\ \llbracket R:\mathsf{u}_m\rrbracket\text{ and }\llbracket S:\mathsf{u}_n\rrbracket_{\langle r,x\rangle},\text{ for }r\in\llbracket R\rrbracket.\]
\end{proposition}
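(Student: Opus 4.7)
The plan is to separate the three conjuncts implicit in the statement $\llbracket(x:R)\rightarrow_m^nS : \mathsf{u}_{(m+1)\vee n}\rrbracket$, namely well-formedness of both terms and the membership $\llbracket(x:R)\rightarrow_m^nS\rrbracket \in \llbracket\mathsf{u}_{(m+1)\vee n}\rrbracket$. Well-formedness of $\mathsf{u}_{(m+1)\vee n}$ is automatic, and by \Cref{RarrowSwf} well-formedness of $(x:R)\rightarrow_m^nS$ is equivalent to the right-hand side of our biconditional. Thus the $\Rightarrow$ direction drops out without using canonicity, and the entire content of the $\Leftarrow$ direction reduces to establishing the above membership.

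Since $(m+1)\vee n\ge 1$, canonicity gives $\llbracket\mathsf{u}_{(m+1)\vee n}\rrbracket=\mathsf{H}(\kappa_{(m+1)\vee n})$, and the definition of $\llbracket\mathsf{p}_m^n\rrbracket$ identifies $\llbracket(x:R)\rightarrow_m^nS\rrbracket$ with $\prod_{r\in\llbracket R\rrbracket}\llbracket S\rrbracket_{\langle r,x\rangle}$. Write $\mu:=|\llbracket R\rrbracket|$ and $\nu_r:=|\llbracket S\rrbracket_{\langle r,x\rangle}|$, so that $\mu<\kappa_m$ and each $\nu_r<\kappa_n$ (using $\llbracket R\rrbracket\in\llbracket\mathsf{u}_m\rrbracket\subseteq\mathsf{H}(\kappa_m)$ and analogously for $S$; this covers $m=0$ uniformly since $\llbracket\mathsf{u}_0\rrbracket\subseteq\mathsf{H}(\kappa_0)$). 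I would first observe that the regular sequence condition forces $\kappa_k\le\kappa_{k+1}$ for every $k$ (take $\nu=2$ in its defining inequality) and then split on the sign of $n-m$. If $n>m$, then $(m+1)\vee n=n$ and $\mu<\kappa_m\le\kappa_{n-1}$; regularity of $\kappa_n$ together with $\mu<\kappa_n$ gives $\nu:=\sup_r\nu_r<\kappa_n$, and the regular sequence condition applied at index $n-1$ yields $|\nu^\mu|<\kappa_n$. If $n\le m$, then $(m+1)\vee n=m+1$ and each $\nu_r<\kappa_n\le\kappa_m$, so regularity of $\kappa_m$ gives $\nu:=\sup_r\nu_r<\kappa_m\le\kappa_{m+1}$, and the condition at index $m$ yields $|\nu^\mu|<\kappa_{m+1}$.

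The product is contained in the set of functions $\llbracket R\rrbracket\to\bigcup_r\llbracket S\rrbracket_{\langle r,x\rangle}$, whose cardinality is at most $|\nu^\mu|<\kappa_{(m+1)\vee n}$. For the hereditary bound, each $f$ in the product is a set of at most $\mu$ pairs $\langle f(r),r\rangle$ whose components have hereditary cardinality below $\kappa_n$ and $\kappa_m$ respectively, hence below $\kappa_{(m+1)\vee n}$; together with $|f|\le\mu<\kappa_{(m+1)\vee n}$ this bounds $f$'s hereditary cardinality as well, and a final collation gives $\prod\in\mathsf{H}(\kappa_{(m+1)\vee n})$, completing the $\Leftarrow$ direction. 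The main obstacle is just bookkeeping: choosing the correct index at which to invoke the regular sequence condition in each case, and keeping track of whether $\kappa_m\le\kappa_n$ or the reverse so that the relevant regularity step actually applies.
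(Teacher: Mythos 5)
Your proposal is correct and follows essentially the same route as the paper's proof: reduce via Proposition \ref{RarrowSwf} to showing the right-hand side implies $\prod_{r\in\llbracket R\rrbracket}\llbracket S\rrbracket_{\langle r,x\rangle}\in\mathsf{H}(\kappa_{(m+1)\vee n})$, bound the cardinality of the product by $|(\sup_r|\llbracket S\rrbracket_{\langle r,x\rangle}|)^{|\llbracket R\rrbracket|}|$ using regularity and the regular-sequence condition with the same case split on $m$ versus $n$, and then handle hereditary membership (the paper cites \cite[Lemma I.13.32]{Kunen2011} where you argue it directly). The only cosmetic difference is that you make the monotonicity $\kappa_k\leq\kappa_{k+1}$ explicit, which the paper leaves implicit.
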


\begin{proof}
    By \Cref{RarrowSwf}, it suffices to show that the right side above implies $\llbracket(x:R)\rightarrow_m^nS\rrbracket\in\llbracket\mathsf{u}_{(m+1)\vee n}\rrbracket$.  But the right side implies $\llbracket R\rrbracket\in\llbracket\mathsf{u}_m\rrbracket\subseteq\mathsf{H}(\kappa_{(m+1)\vee n})$ and $\llbracket S\rrbracket_{\langle r,x\rangle}\in\llbracket\mathsf{u}_n\rrbracket\subseteq\mathsf{H}(\kappa_{(m+1)\vee n})$ and hence $\llbracket S\rrbracket_{\langle r,x\rangle}\subseteq\mathsf{H}(\kappa_{(m+1)\vee n})$, for all $r\in\llbracket R\rrbracket$.  Then by \cite[Lemma I.13.32]{Kunen2011}, $\prod_{r\in\llbracket R\rrbracket}\llbracket S\rrbracket_{\langle r,x\rangle}\subseteq\mathsf{H}(\kappa_{(m+1)\vee n})$ and it suffices to show $|\prod_{r\in\llbracket R\rrbracket}\llbracket S\rrbracket_{\langle r,x\rangle}|<\kappa_{(m+1)\vee n}$.  This follows because, for any function $\phi$ with $|D|<\kappa_m$ and $|\phi(d)|<\kappa_n$, for all $d\in D:=\mathrm{dom}(\phi)$,
    \begin{equation}\label{prodFcard}
        \Big|\prod\phi\Big|\leq|(\sup_{d\in D}|\phi(d)|)^{|D|}|<\kappa_{(m+1)\vee n}.
    \end{equation}
    Indeed, if $m>n$ then $\sup_{d\in D}|\phi(d)|\leq\kappa_n<\kappa_{m\vee n}$, while if $m\leq n$ then $\sup_{d\in D}|\phi(d)|<\kappa_n\leq\kappa_{m\vee n}$, by the regularity of $\kappa_n$.  So in either case $\sup_{d\in D}|\phi(d)|<\kappa_{m\vee n}\leq\kappa_{(m+1)\vee n}$ and $|D|<\kappa_m\leq\kappa_{((m+1)\vee n)-1}$ and hence $|(\sup_{d\in D}|\phi(d)|)^{|D|}|<\kappa_{(m+1)\vee n}$, as $(\kappa_j)$ is a regular sequence.
\end{proof}

In particular, if $\llbracket\cdot\rrbracket$ is canonical then, for all $m,n\in\omega$ and $R,S\in\mathsf{T}$,
\[\llbracket R:\mathsf{u}_m\rrbracket\quad\text{and}\quad\llbracket S:\mathsf{u}_n\rrbracket\qquad\Leftrightarrow\qquad\llbracket R\rightarrow_m^nS:\mathsf{u}_{(m+1)\vee n}\rrbracket.\]

We can now type the product operators $(\mathsf{p}_m^n)$ as follows.

\begin{proposition}
    If $\llbracket\cdot\rrbracket$ is a canonical interpretation and $m,n\in\omega$,
    \[\llbracket\mathsf{p}_m^n:(x:\mathsf{u}_m)\rightarrow(x\rightarrow\mathsf{u}_n)\rightarrow\mathsf{u}_{(m+1)\vee n}\rrbracket,\]
    specifically $\llbracket\mathsf{p}_m^n:(x:\mathsf{u}_m)\rightarrow_{m+1}^{(m\vee n)+2}(x\rightarrow_m^{n+1}\mathsf{u}_n)\rightarrow_{(m\vee n)+1}^{((m+1)\vee n)+1}\mathsf{u}_{(m+1)\vee n}\rrbracket$.
\end{proposition}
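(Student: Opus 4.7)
The plan is to verify the two conditions unpacked from the definition of $\llbracket\mathsf{p}_m^n:(x:\mathsf{u}_m)\rightarrow(x\rightarrow\mathsf{u}_n)\rightarrow\mathsf{u}_{(m+1)\vee n}\rrbracket$: first that the predicate is well-formed, and second that $\llbracket\mathsf{p}_m^n\rrbracket$ actually lies in its interpretation. Both follow cleanly from \Cref{umun,RarrowStype} together with the cardinality estimate already established inside the proof of \Cref{RarrowStype}.

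For well-formedness, I would apply \Cref{RarrowStype} three times, peeling off arrows from the outside in. The innermost step notes that $\llbracket x:\mathsf{u}_m\rrbracket_{\langle D,x\rangle}$ holds for $D\in\llbracket\mathsf{u}_m\rrbracket$ and $\llbracket\mathsf{u}_n:\mathsf{u}_{n+1}\rrbracket$ by \Cref{umun}, whence $\llbracket x\rightarrow_m^{n+1}\mathsf{u}_n:\mathsf{u}_{(m\vee n)+1}\rrbracket_{\langle D,x\rangle}$ since $(m+1)\vee(n+1)=(m\vee n)+1$. Combining this with $\llbracket\mathsf{u}_{(m+1)\vee n}:\mathsf{u}_{((m+1)\vee n)+1}\rrbracket$ and a second application of \Cref{RarrowStype} gives $\llbracket(x\rightarrow_m^{n+1}\mathsf{u}_n)\rightarrow_{(m\vee n)+1}^{((m+1)\vee n)+1}\mathsf{u}_{(m+1)\vee n}:\mathsf{u}_{(m\vee n)+2}\rrbracket_{\langle D,x\rangle}$, using that $((m\vee n)+2)\vee(((m+1)\vee n)+1)=(m\vee n)+2$. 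A third application, together with $\llbracket\mathsf{u}_m:\mathsf{u}_{m+1}\rrbracket$, then witnesses well-formedness of the whole predicate (and in fact types it in $\mathsf{u}_{(m\vee n)+2}$).

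For the membership claim I would compute directly from the definition. By the formula for $\llbracket\mathsf{p}_m^n\rrbracket$, for each $D\in\llbracket\mathsf{u}_m\rrbracket$ we have $\llbracket\mathsf{p}_m^n\rrbracket(D)=\{\langle\prod\phi,\phi\rangle:\phi\in\llbracket\mathsf{u}_n\rrbracket^D\}$, which is a function with domain $\llbracket\mathsf{u}_n\rrbracket^D$. Since the inner product type has interpretation $\llbracket\mathsf{u}_{(m+1)\vee n}\rrbracket^{\llbracket\mathsf{u}_n\rrbracket^D}$ (using $x\notin\mathsf{F}(\mathsf{u}_{(m+1)\vee n})$ and the evaluation of $\llbracket x\rightarrow_m^{n+1}\mathsf{u}_n\rrbracket_{\langle D,x\rangle}$), it suffices to check $\prod\phi\in\llbracket\mathsf{u}_{(m+1)\vee n}\rrbracket=\mathsf{H}(\kappa_{(m+1)\vee n})$ for every $\phi\in\llbracket\mathsf{u}_n\rrbracket^D$. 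But $|D|<\kappa_m$ and $|\phi(d)|<\kappa_n$ for $d\in D$, so the cardinality bound \eqref{prodFcard} already established inside the proof of \Cref{RarrowStype} gives $|\prod\phi|<\kappa_{(m+1)\vee n}$, and by \cite[I.13.32]{Kunen2011} the transitivity of $\mathsf{H}(\kappa_{(m+1)\vee n})$ yields $\prod\phi\subseteq\mathsf{H}(\kappa_{(m+1)\vee n})$ and hence $\prod\phi\in\mathsf{H}(\kappa_{(m+1)\vee n})$.

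There is essentially no obstacle here beyond bookkeeping: the mathematical content is the cardinality estimate \eqref{prodFcard}, which was the crux of \Cref{RarrowStype} and extends verbatim. The only thing requiring care is matching up the specific indices $m{+}1$, $(m\vee n){+}2$, $(m\vee n){+}1$ and $((m{+}1)\vee n){+}1$ appearing in the refined statement, which I would handle by recording the identities $(m+1)\vee(n+1)=(m\vee n)+1$ and $((m\vee n)+2)\vee(((m+1)\vee n)+1)=(m\vee n)+2$ once at the outset.
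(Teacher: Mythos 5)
Your proposal is correct and follows essentially the same route as the paper's proof: well-formedness of the predicate by iterating \Cref{RarrowSwf}/\Cref{RarrowStype} with the index identities $(m+1)\vee(n+1)=(m\vee n)+1$ and $((m\vee n)+2)\vee(((m+1)\vee n)+1)=(m\vee n)+2$, followed by a direct computation of $\llbracket\mathsf{p}_m^n\rrbracket(D)(\phi)=\prod\phi$ and the cardinality bound \eqref{prodFcard} to place $\prod\phi$ in $\mathsf{H}(\kappa_{(m+1)\vee n})$. The only cosmetic differences are that you peel the arrows inside-out rather than outside-in and type the whole predicate in $\mathsf{u}_{(m\vee n)+2}$ where the paper only records its well-formedness.
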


\begin{proof}
    For starters, we should show that
    \[\llbracket(x:\mathsf{u}_m)\rightarrow_{m+1}^{(m\vee n)+2}(x\rightarrow_m^{n+1}\mathsf{u}_n)\rightarrow_{(m\vee n)+1}^{((m+1)\vee n)+1}\mathsf{u}_{(m+1)\vee n}\rrbracket^\mathsf{wf}.\]
    By \Cref{umun}, $\llbracket\mathsf{u}_m:\mathsf{u}_{m+1}\rrbracket$ so, by \Cref{RarrowSwf}, we just have to show that, for all $r\in\llbracket\mathsf{u}_m\rrbracket$,
    \[\llbracket(x\rightarrow_m^{n+1}\mathsf{u}_n)\rightarrow_{(m\vee n)+1}^{((m+1)\vee n)+1}\mathsf{u}_{(m+1)\vee n}:\mathsf{u}_{(m\vee n)+2}\rrbracket_{\langle r,x\rangle}.\]
    Noting that $((m\vee n)+1+1)\vee(((m+1)\vee n)+1)=(m\vee n)+2$ and $\llbracket\mathsf{u}_{(m+1)\vee n}:\mathsf{u}_{((m+1)\vee n)+1}\rrbracket$, again by \Cref{umun}, we must just show
    \[\llbracket(x\rightarrow_m^{n+1}\mathsf{u}_n):\mathsf{u}_{(m\vee n)+1}\rrbracket_{\langle r,x\rangle},\]
    again by \Cref{RarrowStype}.  Noting that $(m+1)\vee(n+1)=(m\vee n)+1$ and $\llbracket x\rrbracket_{\langle r,x\rangle}=r\in\llbracket\mathsf{u}_m\rrbracket$ so $\llbracket x:\mathsf{u}_m\rrbracket_{\langle r,x\rangle}$, this amounts to showing that $\llbracket\mathsf{u}_n:\mathsf{u}_{n+1}\rrbracket$, which again follows from \Cref{umun}.

    Now it only remains to show that
    \begin{align*}
        \llbracket\mathsf{p}_m^n\rrbracket&\in\llbracket(x:\mathsf{u}_m)\rightarrow(x\rightarrow\mathsf{u}_n)\rightarrow\mathsf{u}_{(m+1)\vee n}\rrbracket\\
        &=\prod_{r\in\llbracket\mathsf{u}_m\rrbracket}\llbracket(x\rightarrow\mathsf{u}_n)\rightarrow\mathsf{u}_{(m+1)\vee n}\rrbracket_{\langle r,x\rangle}.
    \end{align*}
    As $\llbracket\cdot\rrbracket$ is an interpretation, $\mathrm{dom}\llbracket\mathsf{p}_m^n\rrbracket=\llbracket\mathsf{u}_m\rrbracket$ so we just have to show
    \[\llbracket\mathsf{p}_m^n\rrbracket(r)\in\llbracket(x\rightarrow\mathsf{u}_n)\rightarrow\mathsf{u}_{(m+1)\vee n}\rrbracket_{\langle r,x\rangle}=\llbracket\mathsf{u}_{(m+1)\vee n}\rrbracket^{\llbracket \mathsf{u}_n\rrbracket^r},\]
    for all $r\in\llbracket\mathsf{u}_m\rrbracket$.  But again we already know $\mathrm{dom}(\llbracket\mathsf{p}_m^n\rrbracket(r))=\llbracket\mathsf{u}_n\rrbracket^r$ because $\llbracket\cdot\rrbracket$ is an interpretation, so we just need to show that
    \[\prod \phi=\llbracket\mathsf{p}_m^n\rrbracket(r)(\phi)\in\llbracket\mathsf{u}_{(m+1)\vee n}\rrbracket,\]
    for all $\phi\in\llbracket\mathsf{u}_n\rrbracket^r$.  As $r\in\llbracket\mathsf{u}_m\rrbracket=\mathsf{H}(\kappa_m)$ and $\llbracket\mathsf{u}_n\rrbracket=\mathsf{H}(\kappa_n)$, we know that $\phi\subseteq\mathsf{H}(\kappa_{m\vee n})\subseteq\mathsf{H}(\kappa_{(m+1)\vee n})$ and hence $\prod \phi\subseteq\mathsf{H}(\kappa_{(m+1)\vee n})$.  Also $|\prod \phi|<\kappa_{(m+1)\vee n}$, by \eqref{prodFcard}, so $\prod\phi\in\mathsf{H}(\kappa_{(m+1)\vee n})=\llbracket\mathsf{u}_{(m+1)\vee n}\rrbracket$, thus completing the proof.
\end{proof}

In particular, if $\llbracket\cdot\rrbracket$ is canonical and $m\in\omega$ then
\[\llbracket\mathsf{p}_m^0:(x:\mathsf{u}_m)\rightarrow(x\rightarrow\mathsf{u}_0)\rightarrow\mathsf{u}_{m+1}\rrbracket.\]
So our canonical interpretations are never `impredicative', i.e.~we can not replace $m+1$ here with $0$, despite how tempting this might be.  Indeed, if $\llbracket\cdot\rrbracket$ is proof-irrelevant then $|\prod \phi|\leq1$ when $\mathrm{ran}(\phi)\subseteq\llbracket\mathsf{u}_0\rrbracket$ and so one might try and identify all singleton sets to enforce impredicativity.  However, this would necessarily mean forgetting the domain of each such $\phi$, which in turn would mean we could no longer tell which terms are well-formed with respect to $\llbracket\cdot\rrbracket$ (see \cite{MiquelWerner2003} for further discussion on this point).  In any case, as mentioned in \Cref{ImpredicativeRemark} and the comment after, we can easily simulate impredicativity in other ways, e.g.~by specifying universal quantification for $\llbracket\mathsf{u}_0\rrbracket$ separately from $\pi$, as discussed below in \S\ref{Specifications}.

\section{Reduction}

Every interpretation $\llbracket\cdot\rrbracket$ yields binary \emph{reduction} and \emph{sub-reduction} relations $\btright_{\llbracket\cdot\rrbracket}$ and $\triangleright_{\llbracket\cdot\rrbracket}$ on $\mathsf{T}$ defined by
\begin{align*}  R\btright\nolimits_{\llbracket\cdot\rrbracket}C\quad&\Leftrightarrow\quad\forall\psi\in\mathsf{Set}^\mathsf{V}\,(\llbracket R\rrbracket_\psi^\mathsf{wf}\Rightarrow(\llbracket C\rrbracket_\psi^\mathsf{wf}\text{ and }\llbracket R\rrbracket_\psi=\llbracket C\rrbracket_\psi)).\\
R\mathop{\triangleright_{\llbracket\cdot\rrbracket}}C\quad&\Leftrightarrow\quad\forall\psi\in\mathsf{Set}^\mathsf{V}\,(\llbracket R\rrbracket_\psi^\mathsf{wf}\Rightarrow(\llbracket C\rrbracket_\psi^\mathsf{wf}\text{ and }\llbracket R\rrbracket_\psi\subseteq\llbracket C\rrbracket_\psi)).
\end{align*}
Equivalently, we have unary relations on \emph{$($sub-$)$reduction statements}
\begin{align*}
    \tag{Reduction Statements}\mathsf{S}_{\btright}&=\mathsf{T}\times\{\btright\}\times\mathsf{T}.\\
    \tag{Sub-Reduction Statements}\mathsf{S}_\triangleright&=\mathsf{T}\times\{\triangleright\}\times\mathsf{T}.
\end{align*}
Particular (sub-)reduction statements $\langle R,{\btright},C\rangle$ and $\langle R,{\triangleright},C\rangle$ will usually be abbreviated to $R\btright C$ and $R\mathop{\triangleright}C$ and read as `$R$ (sub-)reduces to $C$', referring to $R$ and $C$ as the \emph{redex} and \emph{contractum} of the (sub-)reduction statement.  Viewing $\btright_{\llbracket\cdot\rrbracket}$ and $\triangleright_{\llbracket\cdot\rrbracket}$ as a unary relations on (sub-)reduction statements, we again denote them by $\llbracket\cdot\rrbracket$, i.e.
\begin{align*}
    \llbracket R\btright C\rrbracket\quad&\Leftrightarrow\quad\forall \psi\in\mathsf{Set}^\mathsf{V}\,(\llbracket R\rrbracket_\psi^\mathsf{wf}\Rightarrow(\llbracket C\rrbracket_\psi^\mathsf{wf}\text{ and }\llbracket R\rrbracket_\psi=\llbracket C\rrbracket_\psi)).\\
    \llbracket R\mathop{\triangleright}C\rrbracket\quad&\Leftrightarrow\quad\forall \psi\in\mathsf{Set}^\mathsf{V}\,(\llbracket R\rrbracket_\psi^\mathsf{wf}\Rightarrow(\llbracket C\rrbracket_\psi^\mathsf{wf}\text{ and }\llbracket R\rrbracket_\psi\subseteq\llbracket C\rrbracket_\psi)).
\end{align*}
So $\llbracket R\btright C\rrbracket$ and $\llbracket R\mathop{\triangleright}C\rrbracket$ mean that, no matter how we change the interpretation on the variables, if $R$ is well-formed with respect to the interpretation then so is $C$ and, in the case of $\btright$, their interpretations then agree or, in the case of $\triangleright$, the interpretation of $R$ is then contained in that of $C$.  In particular, $\llbracket R\btright C\rrbracket$ and $\llbracket R\triangleright C\rrbracket$ depend only on the values of $\llbracket\cdot\rrbracket$ on the constants $\mathsf{C}$.

For some trivial examples note, for any canonical interpretation $\llbracket\cdot\rrbracket$,
\begin{align*}
    m=n\qquad&\Leftrightarrow\qquad\llbracket\mathsf{u}_m\btright\mathsf{u}_n\rrbracket\quad\text{and}\\
    m\leq n\qquad&\Leftrightarrow\qquad\llbracket\mathsf{u}_m\mathop{\triangleright}\mathsf{u}_n\rrbracket.
\end{align*}
Another immediate example of reduction comes from $\alpha$-conversion.

\begin{proposition}\label{HeadAlphaModel}
    For any interpretation $\llbracket\cdot\rrbracket$, $R,S\in\mathsf{T}$ and $x,y\in\mathsf{V}$,
    \[y\notin\mathsf{F}(S)\qquad\Rightarrow\qquad\llbracket\lambda xRS\btright\lambda yRS_{[y/x]}\rrbracket.\]
\end{proposition}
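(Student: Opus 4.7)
The plan is to unfold and reduce to a pointwise calculation. The statement $\llbracket\lambda xRS\btright\lambda yRS_{[y/x]}\rrbracket$ says that, for every $\psi\in\mathsf{Set}^\mathsf{V}$, if $\llbracket\lambda xRS\rrbracket_\psi^\mathsf{wf}$ then $\llbracket\lambda yRS_{[y/x]}\rrbracket_\psi^\mathsf{wf}$ and $\llbracket\lambda xRS\rrbracket_\psi=\llbracket\lambda yRS_{[y/x]}\rrbracket_\psi$. I would fix such a $\psi$ and, writing the further modification of $\psi$ at a variable $v$ to a value $r$ by the subscript $\psi,\langle r,v\rangle$ in the style of Part~\ref{Part1}, reduce both conclusions to the two pointwise facts
\[\llbracket S_{[y/x]}\rrbracket_{\psi,\langle r,y\rangle}=\llbracket S\rrbracket_{\psi,\langle r,x\rangle}\qquad\text{and}\qquad\llbracket S\rrbracket_{\psi,\langle r,x\rangle}^\mathsf{wf}\;\Rightarrow\;\llbracket S_{[y/x]}\rrbracket_{\psi,\langle r,y\rangle}^\mathsf{wf},\]
holding for every $r$. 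Given these, the defining formula $\llbracket\lambda vQT\rrbracket_\psi=\{\langle\llbracket T\rrbracket_{\psi,\langle r,v\rangle},r\rangle:r\in\llbracket Q\rrbracket_\psi\}$ yields $\llbracket\lambda xRS\rrbracket_\psi=\llbracket\lambda yRS_{[y/x]}\rrbracket_\psi$ directly, while $\llbracket R\rrbracket_\psi^\mathsf{wf}$ (part of the hypothesis on $\lambda xRS$) combined with the well-formedness implication above delivers the well-formedness of $\lambda yRS_{[y/x]}$.

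The value identity is essentially a repeat of \Cref{AlphaEqual} in the current language. The Part~\ref{Part2} analog of the \eqref{Substitutivity} identity of \Cref{SubstitutivityProp} (whose proof transfers verbatim, as the $\rho$-case there was self-contained) gives
\[\llbracket S_{[y/x]}\rrbracket_{\psi,\langle r,y\rangle}=\llbracket S\rrbracket_{\psi,\langle r,y\rangle,\langle\llbracket y\rrbracket_{\psi,\langle r,y\rangle},x\rangle}=\llbracket S\rrbracket_{\psi,\langle r,y\rangle,\langle r,x\rangle},\]
after which the Part~\ref{Part2} analog of \Cref{FreeProp}, together with $y\notin\mathsf{F}(S)$, lets me discard the $\langle r,y\rangle$ modification, leaving $\llbracket S\rrbracket_{\psi,\langle r,x\rangle}$ as required.

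For the well-formedness implication I would apply \Cref{wfSub} with $T=y$: since $\llbracket y\rrbracket^\mathsf{wf}$ is trivial and $\llbracket y\rrbracket_{\psi,\langle r,y\rangle}=r$, the forward direction of \Cref{wfSub} reduces $\llbracket S_{[y/x]}\rrbracket_{\psi,\langle r,y\rangle}^\mathsf{wf}$ to $\llbracket S\rrbracket_{\psi,\langle r,y\rangle,\langle r,x\rangle}^\mathsf{wf}$. A well-formedness version of \Cref{FreeProp} -- namely that $y\notin\mathsf{F}(S)$ forces $\llbracket S\rrbracket_{\psi,\langle s,y\rangle}^\mathsf{wf}\Leftrightarrow\llbracket S\rrbracket_\psi^\mathsf{wf}$ for every $s$ and $\psi$ -- then identifies this with $\llbracket S\rrbracket_{\psi,\langle r,x\rangle}^\mathsf{wf}$, which the hypothesis $\llbracket\lambda xRS\rrbracket_\psi^\mathsf{wf}$ supplies for each $r\in\llbracket R\rrbracket_\psi$.

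The main obstacle is precisely this last auxiliary lemma, the well-formedness analog of \Cref{FreeProp}, which the excerpt has not explicitly recorded. It is not conceptually difficult: the same term-rank induction used in \Cref{FreeProp} applies, as modifying $\psi$ at a variable outside $\mathsf{F}(S)$ never affects any recursive clause defining $\llbracket\cdot\rrbracket^\mathsf{wf}$ on $S$. The only mildly delicate case is the $\lambda$-case, where one must commute the $\langle s,y\rangle$ assignment past a binder that might coincide with $y$, handled exactly as in the proof of \Cref{FreeProp}. Once this auxiliary is in hand, the rest is routine plumbing between \eqref{Substitutivity}, \Cref{FreeProp} and \Cref{wfSub}.
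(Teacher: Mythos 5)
Your proposal is correct and follows essentially the same route as the paper: \Cref{wfSub} (with $T=y$) for the well-formedness of $\lambda yRS_{[y/x]}$, the \Cref{AlphaEqual}-style computation via \eqref{Substitutivity} and \Cref{FreeProp} for the value identity, and then the observation that the argument applies uniformly to every $\llbracket\cdot\rrbracket_\psi$. The one point where you are actually more careful than the paper is in flagging the well-formedness analogue of \Cref{FreeProp} as an unproved auxiliary -- the paper's proof uses it silently in the first implication of its displayed chain -- and your sketch of its proof by term-rank induction is the right one.
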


\begin{proof}
    If $y\notin\mathsf{F}(S)$ then \Cref{wfSub} yields
    \[\llbracket S\rrbracket_{\langle r,x\rangle}^\mathsf{wf}\quad\Rightarrow\quad\llbracket S\rrbracket_{\langle r,y\rangle\langle\llbracket y\rrbracket_{\langle r,y\rangle},x\rangle}^\mathsf{wf}\quad\Rightarrow\quad\llbracket S_{[y/x]}\rrbracket_{\langle r,y\rangle}^\mathsf{wf}.\]
    So if $\llbracket\lambda xRS\rrbracket^\mathsf{wf}$ then $\llbracket S\rrbracket_{\langle r,x\rangle}^\mathsf{wf}$ and hence $\llbracket S_{[y/x]}\rrbracket_{\langle r,y\rangle}^\mathsf{wf}$, for all $r\in\llbracket R\rrbracket$, i.e.~$\llbracket\lambda yRS_{[y/x]}\rrbracket^\mathsf{wf}$.  Also $\llbracket\lambda xRS\rrbracket=\llbracket\lambda yRS_{[y/x]}\rrbracket$, as in \Cref{AlphaEqual}.  As $\llbracket\cdot\rrbracket$ was arbitrary, the same applies to $\llbracket\cdot\rrbracket_\phi$, for any $f\in\mathsf{Set}^\mathsf{V}$.
\end{proof}

Yet another example comes from $\beta$-reduction.

\begin{proposition}\label{HeadBetaModel}
    For any interpretation $\llbracket\cdot\rrbracket$, $R,S\in\mathsf{T}$ and $x\in\mathsf{V}$,
    \[\llbracket\beta\lambda xRST\btright S_{[T/x]}\rrbracket.\]
\end{proposition}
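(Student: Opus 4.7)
The plan is to unfold the definition of $\btright$ and reduce the claim to (i) well-formedness tracking via Proposition \ref{wfSub}, and (ii) the obvious analogue of Proposition \ref{SubstitutivityProp} for the present system. Since the definition of $\btright$ universally quantifies over $\psi\in\mathsf{Set}^\mathsf{V}$ and $\llbracket\cdot\rrbracket_\psi$ is itself again an interpretation, it suffices to fix an arbitrary interpretation $\llbracket\cdot\rrbracket$ and show, assuming $\llbracket\beta\lambda xRST\rrbracket^\mathsf{wf}$, that both $\llbracket S_{[T/x]}\rrbracket^\mathsf{wf}$ holds and $\llbracket\beta\lambda xRST\rrbracket=\llbracket S_{[T/x]}\rrbracket$.

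For well-formedness, the hypothesis $\llbracket\beta\lambda xRST\rrbracket^\mathsf{wf}$ unpacks via the application clause into $\llbracket\lambda xRS\rrbracket^\mathsf{wf}$, $\llbracket T\rrbracket^\mathsf{wf}$, and $\llbracket T\rrbracket\in\mathrm{dom}\llbracket\lambda xRS\rrbracket=\llbracket R\rrbracket$ (the functionality requirement being automatic from the $\lambda$-clause of the interpretation). Unfolding $\llbracket\lambda xRS\rrbracket^\mathsf{wf}$ further yields $\llbracket S\rrbracket_{\langle r,x\rangle}^\mathsf{wf}$ for every $r\in\llbracket R\rrbracket$; specialising to $r=\llbracket T\rrbracket$ and combining with $\llbracket T\rrbracket^\mathsf{wf}$, Proposition \ref{wfSub} delivers $\llbracket S_{[T/x]}\rrbracket^\mathsf{wf}$.

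For the value equality, direct evaluation from the defining clauses of the interpretation gives
\[
\llbracket\beta\lambda xRST\rrbracket \;=\; \llbracket\lambda xRS\rrbracket(\llbracket T\rrbracket) \;=\; \llbracket S\rrbracket_{\langle\llbracket T\rrbracket,x\rangle},
\]
where the second equality uses $\llbracket T\rrbracket\in\llbracket R\rrbracket$ to pick out the correct branch of the $\lambda$-interpretation. The right-hand side equals $\llbracket S_{[T/x]}\rrbracket$ by the analogue of Proposition \ref{SubstitutivityProp} for the present language; the same induction on term complexity carries over verbatim, since the $\beta$- and $\lambda$-clauses of the interpretation are unchanged and the new product constants $\mathsf{p}_m^n$, being elements of $\mathsf{T}^0$ with no free variables, are trivially handled by the base case.

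There is no genuine conceptual obstacle here — the whole argument amounts to unfolding definitions together with previously established facts. The only care required is in the well-formedness bookkeeping: one must unpack both the application and the $\lambda$ clauses of $\llbracket\cdot\rrbracket^\mathsf{wf}$ correctly so that the hypotheses of Proposition \ref{wfSub} are in place, and one must separately observe that substitutivity is not part of the stated preliminaries of Part \ref{Part2} and therefore warrants an explicit remark that its Part \ref{Part1} proof transfers without change.
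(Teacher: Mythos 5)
Your proof is correct and follows essentially the same route as the paper's: unpack $\llbracket\beta\lambda xRST\rrbracket^\mathsf{wf}$ via the application and $\lambda$ clauses, apply Proposition~\ref{wfSub} to get $\llbracket S_{[T/x]}\rrbracket^\mathsf{wf}$, and conclude the value equality by \eqref{Substitutivity}. The paper simply cites the Part~\ref{Part1} substitutivity result directly where you add the (reasonable) remark that its proof transfers to the present language.
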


\begin{proof}
    We need to show $\llbracket\beta\lambda xRST\rrbracket^\mathsf{wf}$ always implies $\llbracket S_{[T/x]}\rrbracket^\mathsf{wf}$ and $\llbracket\beta\lambda xRST\rrbracket=\llbracket S_{[T/x]}\rrbracket$.  Now $\llbracket\beta\lambda xRST\rrbracket^\mathsf{wf}$ implies $\llbracket T\rrbracket^\mathsf{wf}$, $\llbracket\lambda xRS\rrbracket^\mathsf{wf}$ and $\llbracket T\rrbracket\in\mathrm{dom}\llbracket\lambda xRS\rrbracket=\llbracket R\rrbracket$.  Thus $\llbracket S\rrbracket_{\langle r,x\rangle}^\mathsf{wf}$, for all $r\in\llbracket R\rrbracket$, so $\llbracket S\rrbracket_{\langle\llbracket T\rrbracket,x\rangle}^\mathsf{wf}$ and hence $\llbracket S_{[T/x]}\rrbracket^\mathsf{wf}$, by \Cref{wfSub}.  Then \eqref{Substitutivity} yields
    \[\llbracket\beta\lambda xRST\rrbracket=\llbracket\lambda xRS\rrbracket(\llbracket T\rrbracket)=\llbracket S\rrbracket_{\langle\llbracket T\rrbracket,x\rangle}=\llbracket S_{[T/x]}\rrbracket.\qedhere\]
\end{proof}

The other standard reduction notion is $\eta$-reduction.  However, this is really an example of sub-reduction.

\begin{proposition}\label{EtaReduction}
    For any interpretation $\llbracket\cdot\rrbracket$, $R,F\in\mathsf{T}$ and $x\in\mathsf{V}$,
    \[x\notin\mathsf{F}(F)\qquad\Rightarrow\qquad\llbracket\lambda xR(Fx)\mathop{\triangleright}F\rrbracket.\]
\end{proposition}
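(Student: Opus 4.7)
The plan is to unfold the definition of $\triangleright$ and check the two required conditions — preservation of well-formedness and the inclusion of interpretations — using \Cref{FreeProp}, which implies that modifying the interpretation at $x$ has no effect on $F$ since $x\notin\mathsf{F}(F)$.

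First I would fix an arbitrary $\psi\in\mathsf{Set}^\mathsf{V}$ and assume $\llbracket\lambda xR(Fx)\rrbracket_\psi^\mathsf{wf}$. Since $\llbracket\cdot\rrbracket_\psi$ is itself an interpretation, I may as well simplify notation and work with $\llbracket\cdot\rrbracket$ directly. Unpacking the well-formedness clause for $\lambda$-terms gives $\llbracket R\rrbracket^\mathsf{wf}$ together with $\llbracket Fx\rrbracket^\mathsf{wf}_{\langle r,x\rangle}$ for every $r\in\llbracket R\rrbracket$. Unpacking the well-formedness clause for $\beta$-terms in turn yields, for each such $r$, that $\llbracket F\rrbracket^\mathsf{wf}_{\langle r,x\rangle}$, that $\llbracket F\rrbracket_{\langle r,x\rangle}$ is a function, and that $\llbracket x\rrbracket_{\langle r,x\rangle}=r\in\mathrm{dom}\llbracket F\rrbracket_{\langle r,x\rangle}$.

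The key simplification comes from \Cref{FreeProp}: because $x\notin\mathsf{F}(F)$, we have $\llbracket F\rrbracket_{\langle r,x\rangle}=\llbracket F\rrbracket$ for every $r$. Hence $\llbracket F\rrbracket^\mathsf{wf}$ holds, $\llbracket F\rrbracket$ is a function, and $\llbracket R\rrbracket\subseteq\mathrm{dom}\llbracket F\rrbracket$. This already secures the first requirement of the sub-reduction statement, namely $\llbracket F\rrbracket^\mathsf{wf}$.

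For the inclusion, I would compute directly from the defining clause for $\lambda$-terms:
\[\llbracket\lambda xR(Fx)\rrbracket=\{\langle\llbracket Fx\rrbracket_{\langle r,x\rangle},r\rangle:r\in\llbracket R\rrbracket\}=\{\langle\llbracket F\rrbracket(r),r\rangle:r\in\llbracket R\rrbracket\},\]
using again that $\llbracket F\rrbracket_{\langle r,x\rangle}=\llbracket F\rrbracket$. Since $\llbracket F\rrbracket$ is a function containing $\llbracket R\rrbracket$ in its domain, we have $\llbracket F\rrbracket=\{\langle\llbracket F\rrbracket(d),d\rangle:d\in\mathrm{dom}\llbracket F\rrbracket\}\supseteq\{\langle\llbracket F\rrbracket(r),r\rangle:r\in\llbracket R\rrbracket\}=\llbracket\lambda xR(Fx)\rrbracket$, as required. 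The only subtlety worth flagging is that in general this gives a proper containment, not equality, which is exactly why $\eta$ must be handled as a sub-reduction rather than a full reduction — the domain $\llbracket R\rrbracket$ may be a strict subset of $\mathrm{dom}\llbracket F\rrbracket$.
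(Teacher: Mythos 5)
Your proof is correct and follows essentially the same route as the paper's: unfold well-formedness of the $\lambda$- and $\beta$-terms, use $x\notin\mathsf{F}(F)$ to identify $\llbracket F\rrbracket_{\langle r,x\rangle}$ with $\llbracket F\rrbracket$, and conclude the inclusion by comparing the graph over $\llbracket R\rrbracket$ with the graph over $\mathrm{dom}\llbracket F\rrbracket$. Your closing remark about why $\eta$ is only a sub-reduction matches the paper's own commentary preceding the proposition.
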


\begin{proof}
    Assume $\llbracket\lambda xR(Fx)\rrbracket^\mathsf{wf}$ so $\llbracket Fx\rrbracket^\mathsf{wf}_{\langle s,x\rangle}$, for all $s\in\llbracket R\rrbracket$.  As $x\notin\mathsf{F}(F)$, it follows that $\llbracket F\rrbracket=\llbracket F\rrbracket_{\langle s,x\rangle}\in\mathsf{Fun}$ and $s\in\mathrm{dom}\llbracket F\rrbracket$, for all $s\in\llbracket R\rrbracket$, i.e.~$\llbracket R\rrbracket\subseteq\mathrm{dom}\llbracket F\rrbracket$.  Thus
    \begin{align*}
        \llbracket\lambda xR(Fx)\rrbracket&=\{\langle\llbracket Fx\rrbracket_{\langle s,x\rangle},s\rangle:s\in\llbracket R\rrbracket\}\\
        &\subseteq\{\langle\llbracket F\rrbracket(s),s\rangle:s\in\mathrm{dom}\llbracket F\rrbracket\}\\
        &=\llbracket F\rrbracket.
    \end{align*}
    As the interpretation was arbitrary, the same argument is valid for $\llbracket\cdot\rrbracket_\psi$, for any $\psi\in\mathsf{Set}^\mathsf{V}$, and so this shows that $\llbracket\lambda xR(Fx)\triangleright F\rrbracket$.
\end{proof}

Denote the set of reduction, sub-reduction and typing statements by
\[\tag{Statements}\mathsf{S}=\mathsf{S}_:\cup\mathsf{S}_\triangleright\cup\mathsf{S}_{\btright}.\]
Extending the terminology above, if $X\in\mathsf{S}$ and $\llbracket X\rrbracket$ holds then we say the statement $X$ is \emph{satisfied} by the interpretation $\llbracket\cdot\rrbracket$.  Likewise, if $\llbracket\cdot\rrbracket$ satisfies every statement in some given set $\Gamma\subseteq\mathsf{S}$ then we say $\llbracket\cdot\rrbracket$ is a \emph{model} for $\Gamma$ or again that $\llbracket\cdot\rrbracket$ \emph{satisfies} $\Gamma$.  Thus the results above are saying that every interpretation satisfies the \emph{head $\alpha$/$\beta$-reduction} and \emph{head $\eta$-sub-reduction} statements defined by
\begin{align*}
    {\btright\nolimits^\alpha}&=\{(\lambda xRS\btright\lambda yRS_{[y/x]})\mathrel{|}R,S\in\mathsf{T},x\in\mathsf{V}\setminus\mathsf{B}(S)\text{ and }y\in\mathsf{V}\setminus\mathsf{V}(S)\}.\\
    {\btright\nolimits^\beta}&=\{(\lambda xRS)T\btright S_{[T/x]})\mathrel{|}R,S,T\in\mathsf{T}\text{ and }x\in\mathsf{V}\}.\\
    {\triangleright^\eta}&=\{(\lambda xR(Fx)\mathop{\triangleright}F)\mathrel{|}R,F\in\mathsf{T}\text{ and }x\in\mathsf{V}\}.
\end{align*}

Like before, define the \emph{contextual closure} of any $\Gamma\subseteq\mathsf{T}\times\mathsf{Set}\times\mathsf{T}$ by
\[\Gamma^\mathsf{c}=\{\langle PRQ,\bullet,PCQ\rangle:PRQ,PCQ\in\mathsf{T},\langle R,\bullet,C\rangle\in\Gamma\text{ and }Q(0)\neq{'}\}.\]
Also define the transitive and reflexive closure of $\Gamma$ as before just leaving the extra symbol in the middle fixed.  We immediately see that any model for some $\Gamma\subseteq\mathsf{S}_{\btright}$ is also a model for its reflexive transitive contextual closure $\Gamma^\mathsf{ctr}$.  In particular, any interpretation satisfies the general \emph{$\alpha$/$\beta$-reduction} statements defined by
\[{\btright\nolimits_\alpha}={\btright\nolimits^{\alpha\mathsf{ctr}}}\qquad\text{and}\qquad{\btright\nolimits_\beta}={\btright\nolimits^{\beta\mathsf{ctr}}}.\]
However, a model for some $\Gamma\subseteq\mathsf{S}_\triangleright$ is also only a model for its reflexive transitive closure $\Gamma^\mathsf{rt}$, not its contextual closure $\Gamma^\mathsf{c}$ (as inclusions of sets need not be preserved by function application, for example).  In particular, interpretations may not satisfy $\triangleright^{\eta\mathsf{crt}}$, which is how one would usually define general $\eta$-sub-reduction statements, rather they only satisfy $\triangleright^{\eta\mathsf{rt}}$.

\section{Specifications}\label{Specifications}

Here we look at how statements can be used to specify certain mathematical structures.  So at the very least, any specification $\Gamma\subseteq\mathsf{S}$ should be \emph{satisfiable}, meaning it should have at least one model, preferably a canonical one.  But any canonical model for $\Gamma$ should also be isomorphic in an appropriate sense to the structure we want to specify.

The simplest example to start with is the specification for `false', i.e.~the empty set $\emptyset$.  First take $\bot\in\underline{\mathsf{C}}:=\mathsf{C}\setminus(\mathsf{U}\cup\mathsf{P})$ and a canonical interpretation $\llbracket\cdot\rrbracket$ with $\llbracket\bot\rrbracket=\emptyset$.  Certainly $\llbracket\bot:\mathsf{u}_0\rrbracket$, by the canonicity of $\llbracket\cdot\rrbracket$, but $\llbracket\mathsf{u}_0\rrbracket$ also contains non-empty sets so this is definitely not a complete specification.  What distinguishes $\emptyset$ is that we have a map from $\emptyset$ to $\prod_{r\in\llbracket u_0\rrbracket}r=\emptyset$, namely the empty map $\emptyset$ again.  Changing the interpretation at another constant $\lightning\in\underline{\mathsf{C}}$ so that $\llbracket\lightning\rrbracket=\emptyset$ too, this means $\llbracket\lightning:\bot\rightarrow_0^2(x:\mathsf{u}_0)\rightarrow_1^0 x\rrbracket$ or, leaving the indices implicit,
\[\llbracket\lightning:\bot\rightarrow(x:\mathsf{u}_0)\rightarrow x\rrbracket.\]
The completeness of this specification can now be stated as follows.

\begin{proposition}
    
    For any canonical $\llbracket\cdot\rrbracket$, ${\bot},{\lightning}\in\underline{\mathsf{C}}$ and $x\in\mathsf{V}$,
    \[\llbracket\bot:\mathsf{u}_0\rrbracket\quad\text{and}\quad\llbracket\lightning:\bot\rightarrow(x:\mathsf{u}_0)\rightarrow x\rrbracket\qquad\Rightarrow\qquad\llbracket\bot\rrbracket=\emptyset.\]
\end{proposition}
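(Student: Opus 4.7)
The plan is to unfold the abbreviations in the hypothesis $\llbracket\mathsf{ab}:\bot\rightarrow(x:\mathsf{u}_0)\rightarrow x\rrbracket$ and exploit canonicity to force an empty product. The key observation is that the inner type $(x:\mathsf{u}_0)\rightarrow x$ must be interpreted as the empty set, simply because $\emptyset$ itself lies in $\llbracket\mathsf{u}_0\rrbracket$.

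First I would recall that $\bot\rightarrow(x:\mathsf{u}_0)\rightarrow x$ abbreviates $\pi y\bot((x:\mathsf{u}_0)\rightarrow x)$ with $y\notin\mathsf{F}((x:\mathsf{u}_0)\rightarrow x)$, so that
\[\llbracket\bot\rightarrow(x:\mathsf{u}_0)\rightarrow x\rrbracket\ =\ \prod_{b\in\llbracket\bot\rrbracket}\llbracket(x:\mathsf{u}_0)\rightarrow x\rrbracket_{\langle b,y\rangle}\ =\ \llbracket(x:\mathsf{u}_0)\rightarrow x\rrbracket^{\llbracket\bot\rrbracket},\]
using \Cref{FreeProp}. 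Similarly unfolding the inner arrow gives
\[\llbracket(x:\mathsf{u}_0)\rightarrow x\rrbracket\ =\ \prod_{r\in\llbracket\mathsf{u}_0\rrbracket}\llbracket x\rrbracket_{\langle r,x\rangle}\ =\ \prod_{r\in\llbracket\mathsf{u}_0\rrbracket}r.\]

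Next I would use canonicity: by \Cref{CanonicalInterpretation} we have $\emptyset\in\llbracket\mathsf{u}_0\rrbracket$, so the product $\prod_{r\in\llbracket\mathsf{u}_0\rrbracket}r$ has $\emptyset$ as one of its factors and is therefore empty. Hence $\llbracket(x:\mathsf{u}_0)\rightarrow x\rrbracket=\emptyset$, and consequently
\[\llbracket\bot\rightarrow(x:\mathsf{u}_0)\rightarrow x\rrbracket\ =\ \emptyset^{\llbracket\bot\rrbracket}\ =\ \begin{cases}\{\emptyset\}&\text{if }\llbracket\bot\rrbracket=\emptyset,\\\emptyset&\text{if }\llbracket\bot\rrbracket\neq\emptyset.\end{cases}\]

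Finally, the hypothesis $\llbracket\mathsf{ab}:\bot\rightarrow(x:\mathsf{u}_0)\rightarrow x\rrbracket$ says in particular that $\llbracket\mathsf{ab}\rrbracket\in\llbracket\bot\rightarrow(x:\mathsf{u}_0)\rightarrow x\rrbracket$, so this set must be non-empty, forcing $\llbracket\bot\rrbracket=\emptyset$. There is no real obstacle; the only care required is keeping track of the implicit indices on $\rightarrow$ and verifying that $y$ can indeed be chosen outside $\mathsf{F}((x:\mathsf{u}_0)\rightarrow x)$ so that the binder in the outer $\pi$ does not interfere with the interpretation of the inner arrow.
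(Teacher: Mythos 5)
Your proof is correct and follows essentially the same route as the paper's: compute $\llbracket(x:\mathsf{u}_0)\rightarrow x\rrbracket=\prod_{r\in\llbracket\mathsf{u}_0\rrbracket}r=\emptyset$ from $\emptyset\in\llbracket\mathsf{u}_0\rrbracket$ (canonicity), observe $\emptyset^{\llbracket\bot\rrbracket}$ is empty unless $\llbracket\bot\rrbracket=\emptyset$, and conclude from $\llbracket\mathsf{ab}\rrbracket$ witnessing non-emptiness. The only cosmetic point is that in this second system the arrows unfold via the operators $\mathsf{p}_m^n$ rather than the $\pi$-terms and \Cref{FreeProp} of \Cref{Part1}, but the resulting product computation is identical.
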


\begin{proof}
    Just note that $\llbracket(x:\mathsf{u}_0)\rightarrow x\rrbracket=\prod_{r\in\llbracket u_0\rrbracket}r=\emptyset$ because $\emptyset\in\llbracket\mathsf{u}_0\rrbracket$, as $\llbracket\cdot\rrbracket$ is canonical.  Thus $\llbracket\bot\rightarrow(x:\mathsf{u}_0)\rightarrow x\rrbracket=\emptyset^{\llbracket\bot\rrbracket}=\emptyset$ unless $\llbracket\bot\rrbracket=\emptyset$.  If $\llbracket\lightning\rrbracket\in\emptyset^{\llbracket\bot\rrbracket}$ then this is the only possibility.
\end{proof}

Note we do not even really need the assumption $\llbracket\bot:\mathsf{u}_0\rrbracket$ above, we are just adding it for clarity and so the implicit indices on the arrows in the statement $(\lightning:\bot\rightarrow(x:\mathsf{u}_0)\rightarrow x)$ can be inferred.

Another good example is equality.  This time take $a,\mathsf{eq}_a\in\underline{\mathsf{C}}$ and a canonical interpretation $\llbracket\cdot\rrbracket$ such that, for all $r,s\in\llbracket a\rrbracket$,
\[\llbracket\mathsf{eq}_a\rrbracket(r)(s)\neq\emptyset\qquad\Leftrightarrow\qquad r=s.\]
Thus we can change $\llbracket\cdot\rrbracket$ at another constant $\mathsf{rfl}_a\in\underline{\mathsf{C}}$ so $\llbracket\mathsf{rfl}_a\rrbracket$ takes each $r\in\llbracket a\rrbracket$ to an element of $\llbracket\mathsf{eq}_a\rrbracket(r)(r)$, i.e. $\llbracket\mathsf{rfl}_a\rrbracket\in\prod_{r\in\llbracket a\rrbracket}\llbracket\mathsf{eq}_a\rrbracket(r)(r)$.

So then $\llbracket\cdot\rrbracket$ is a canonical model for the typing statements $(a:\mathsf{u}_n)$, $(\mathsf{eq}_a:a\rightarrow a\rightarrow\mathsf{u}_0)$ and $(\mathsf{rfl}_a:(x:a)\rightarrow\mathsf{eq}_axx)$.  However, there are other models $
\llparenthesis\cdot\rrparenthesis$ of these statements where $\llparenthesis\mathsf{eq}_a\rrparenthesis(r)(s)$ is non-empty not just when $r=s$ but for some distinct $r,s\in\llbracket a\rrbracket$ as well.  To complete the specification, we can use the substitution property of equality, specifically the fact that, for any $\phi:\llbracket a\rrbracket\rightarrow\llbracket\mathsf{u}_0\rrbracket$, if $r=s$ and $\phi(r)\neq\emptyset$ then $\phi(s)\neq\emptyset$ too.  So we can change $\llbracket\cdot\rrbracket$ at another constant $\mathsf{sub}_a\in\underline{\mathsf{C}}$ so that $\llbracket\mathsf{sub}_a\rrbracket\in\prod_{r\in\llbracket a\rrbracket}\prod_{s\in\llbracket a\rrbracket}(\prod_{\phi\in\llbracket\mathsf{u}_0\rrbracket^{\llbracket a\rrbracket}}\phi(s)^{\phi(r)})^{\llbracket\mathsf{eq}_a\rrbracket(r)(s)}$ and thus $\llbracket\cdot\rrbracket$ is a canonical model for the typing statement
\[(\mathsf{sub}_a:(x:a)\rightarrow(y:a)\rightarrow\mathsf{eq}_axy\rightarrow(p:a\rightarrow\mathsf{u}_0)\rightarrow px\rightarrow py).\]
This completes the specification, as shown by the following.

\begin{proposition}\label{EqSpec}
    Given $n\in\omega$, $x,y,p\in\mathsf{V}$ and $a,\mathsf{eq}_a,\mathsf{rfl}_a\in\underline{\mathsf{C}}$, if
\begin{align*}
    &(a:\mathsf{u}_n),(\mathsf{eq}_a:a\rightarrow a\rightarrow\mathsf{u}_0),(\mathsf{rfl}_a:(x:a)\rightarrow\mathsf{eq}_axx)\quad\text{and}\\
    &(\mathsf{sub}_a:(x:a)\rightarrow(y:a)\rightarrow\mathsf{eq}_axy\rightarrow(p:a\rightarrow\mathsf{u}_0)\rightarrow px\rightarrow py)
\end{align*}
    have a canonical model $\llbracket\cdot\rrbracket$ then, for all $r,s\in\llbracket a\rrbracket$,
    \[\llbracket\mathsf{eq}_a\rrbracket(r)(s)\neq\emptyset\qquad\Leftrightarrow\qquad r=s.\qedhere\]
\end{proposition}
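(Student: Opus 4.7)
The plan is to unfold what each of the four typing hypotheses says semantically about $\llbracket\cdot\rrbracket$, and then run the classical Leibniz-style argument: reflexivity gives one direction, and a suitably chosen characteristic predicate combined with $\mathsf{sub}_a$ gives the other.

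First I would handle the easy direction $r=s\Rightarrow\llbracket\mathsf{eq}_a\rrbracket(r)(s)\neq\emptyset$. The hypothesis $\llbracket\mathsf{rfl}_a:(x{:}a)\rightarrow\mathsf{eq}_axx\rrbracket$ unfolds (using the canonical interpretation of $\pi$-terms) to $\llbracket\mathsf{rfl}_a\rrbracket\in\prod_{r\in\llbracket a\rrbracket}\llbracket\mathsf{eq}_a\rrbracket(r)(r)$, so $\llbracket\mathsf{rfl}_a\rrbracket(r)\in\llbracket\mathsf{eq}_a\rrbracket(r)(r)=\llbracket\mathsf{eq}_a\rrbracket(r)(s)$ when $r=s$, witnessing non-emptiness.

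For the harder direction, suppose $g\in\llbracket\mathsf{eq}_a\rrbracket(r)(s)$ and define the characteristic function $\phi\in\mathsf{Set}^{\llbracket a\rrbracket}$ by $\phi(t)=\{\emptyset\}$ if $t=r$ and $\phi(t)=\emptyset$ otherwise. Since $\llbracket\cdot\rrbracket$ is canonical we have $\{\emptyset,\{\emptyset\}\}\subseteq\llbracket\mathsf{u}_0\rrbracket$, so $\phi\in\llbracket\mathsf{u}_0\rrbracket^{\llbracket a\rrbracket}=\llbracket a\rightarrow\mathsf{u}_0\rrbracket$. Now unfold the hypothesis $\llbracket\mathsf{sub}_a:(x{:}a)\rightarrow(y{:}a)\rightarrow\mathsf{eq}_axy\rightarrow(p{:}a\rightarrow\mathsf{u}_0)\rightarrow px\rightarrow py\rrbracket$: it says that for every $r,s\in\llbracket a\rrbracket$, every $g\in\llbracket\mathsf{eq}_a\rrbracket(r)(s)$, every $\phi\in\llbracket\mathsf{u}_0\rrbracket^{\llbracket a\rrbracket}$ and every $q\in\phi(r)$, one has $\llbracket\mathsf{sub}_a\rrbracket(r)(s)(g)(\phi)(q)\in\phi(s)$. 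Applying this with our $\phi$ and $q=\emptyset\in\phi(r)$ forces $\phi(s)\neq\emptyset$, hence $s=r$ by the definition of $\phi$.

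The only point requiring care -- and the one I would flag as the main obstacle -- is justifying that $\phi\in\llbracket a\rightarrow\mathsf{u}_0\rrbracket$, i.e.\ that the target of the characteristic function really lies in $\llbracket\mathsf{u}_0\rrbracket$. This is precisely where canonicity is used: without the clause $\{\emptyset,\{\emptyset\}\}\subseteq\llbracket\mathsf{u}_0\rrbracket$ in \Cref{CanonicalInterpretation} the argument fails, and indeed without it the specification admits degenerate ``models'' in which $\mathsf{eq}_a$ is interpreted as a coarser equivalence. A secondary, purely bookkeeping task is to verify that the implicit universe indices decorating the arrows in the statement of $\mathsf{sub}_a$ really can be chosen consistently (i.e.\ the relevant $(x{:}a)\rightarrow_m^n\cdots$ terms are well-formed in a canonical interpretation), but this follows from \Cref{RarrowStype} applied level by level and the hypothesis $(a:\mathsf{u}_n)$.
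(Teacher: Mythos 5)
Your proposal is correct and follows essentially the same route as the paper's own proof: reflexivity via $\mathsf{rfl}_a$ for one direction, and a characteristic predicate $\phi$ with $\phi(r)\neq\emptyset=\phi(s)$ fed to $\mathsf{sub}_a$ for the other, with canonicity (i.e.\ $\{\emptyset,\{\emptyset\}\}\subseteq\llbracket\mathsf{u}_0\rrbracket$) justifying $\phi\in\llbracket\mathsf{u}_0\rrbracket^{\llbracket a\rrbracket}$ exactly as you flag. The only difference is cosmetic: the paper phrases the second direction as a contradiction from assuming $r\neq s$, while you derive $s=r$ directly.
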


\begin{proof}
    Note $\llbracket\mathsf{rfl}_a:(x:a)\rightarrow\mathsf{eq}_axx\rrbracket$ means $\llbracket\mathsf{rfl}_a\rrbracket\in\prod_{r\in\llbracket a\rrbracket}\llbracket\mathsf{eq}_a\rrbracket(r)(r)$ and hence $\llbracket\mathsf{eq}_a\rrbracket(r)(r)\neq\emptyset$, for all $r\in\llbracket a\rrbracket$.

    Conversely, say we had distinct $r,s\in\llbracket a\rrbracket$ and $t\in\llbracket\mathsf{eq}_a\rrbracket(r)(s)$.  Take $\phi\in\llbracket\mathsf{u}_0\rrbracket^{\llbracket a\rrbracket}$ with $\phi(r)\neq\emptyset=\phi(s)$, so we have some $u\in\phi(r)$.  Then $\llbracket\mathsf{sub}_a:(x:a)\rightarrow(y:a)\rightarrow\mathsf{eq}_axy\rightarrow(p:a\rightarrow\mathsf{u}_0)\rightarrow px\rightarrow py\rrbracket$ implies
    \[\llbracket\mathsf{sub}_a\rrbracket(r)(s)(t)(\phi)(u)\in\phi(s)=\emptyset,\]
    a contradiction, completing the proof.
\end{proof}

As noted in the previous section, $\llbracket R\btright S\rrbracket$ always implies $\llbracket (R\btright S)^\mathsf{c}\rrbracket$ and hence $\llbracket(R\mathop{\triangleright}S)^\mathsf{c}\rrbracket$.  When $\llbracket\cdot\rrbracket$ satisfies the above specification, we can also reverse this and derive equality from the contextual closure of a sub-reduction statement.

\begin{proposition}\label{SubReductionEq}
    If $\llbracket\cdot\rrbracket$ satisfies the typing statements in \Cref{EqSpec} then, for all $R,S\in\mathsf{T}$,
    \[\llbracket R:a\rrbracket\quad\text{and}\quad\llbracket(R\mathop{\triangleright}S)^\mathsf{c}\rrbracket\qquad\Rightarrow\qquad\llbracket R\rrbracket=\llbracket S\rrbracket.\]
\end{proposition}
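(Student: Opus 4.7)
The plan is to use \Cref{EqSpec} indirectly: produce an element of $\llbracket\mathsf{eq}_aRR\rrbracket$ via reflexivity, transport it through the contextual closure of $R\mathop{\triangleright}S$ to obtain an element of $\llbracket\mathsf{eq}_aRS\rrbracket$, and then invoke the ``only if'' direction of \Cref{EqSpec} to conclude $\llbracket R\rrbracket=\llbracket S\rrbracket$. The key idea is that although $R\mathop{\triangleright}S$ only says $\llbracket R\rrbracket\subseteq\llbracket S\rrbracket$ (which is much weaker than equality), placing $R$ and $S$ inside $\mathsf{eq}_a R[\,\cdot\,]$ converts set inclusion into a non-emptiness statement that the equality specification can exploit.

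Concretely, first I would take the strings $P:=\beta\beta\mathsf{eq}_a R$ and $Q:=\emptyset$, so that $PRQ=\mathsf{eq}_aRR$ and $PSQ=\mathsf{eq}_aRS$ are both terms (each parses as $\beta(\beta\mathsf{eq}_aR)R$ and $\beta(\beta\mathsf{eq}_aR)S$ respectively). Hence $(\mathsf{eq}_aRR\mathop{\triangleright}\mathsf{eq}_aRS)\in(R\mathop{\triangleright}S)^\mathsf{c}$, and the hypothesis yields $\llbracket\mathsf{eq}_aRR\mathop{\triangleright}\mathsf{eq}_aRS\rrbracket$.

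Second, I would check $\llbracket\mathsf{eq}_aRR\rrbracket^\mathsf{wf}$ using $\llbracket R:a\rrbracket$ together with $\llbracket\mathsf{eq}_a:a\rightarrow a\rightarrow\mathsf{u}_0\rrbracket$, and observe that reflexivity $\llbracket\mathsf{rfl}_a:(x:a)\rightarrow\mathsf{eq}_axx\rrbracket$ gives $\llbracket\mathsf{rfl}_a\rrbracket(\llbracket R\rrbracket)\in\llbracket\mathsf{eq}_a\rrbracket(\llbracket R\rrbracket)(\llbracket R\rrbracket)=\llbracket\mathsf{eq}_aRR\rrbracket$, so this set is non-empty. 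Specialising the sub-reduction statement to $\psi=\emptyset$ then yields $\llbracket\mathsf{eq}_aRS\rrbracket^\mathsf{wf}$ (which in particular forces $\llbracket S\rrbracket\in\llbracket a\rrbracket$) together with $\llbracket\mathsf{eq}_aRR\rrbracket\subseteq\llbracket\mathsf{eq}_aRS\rrbracket$. Combining, $\llbracket\mathsf{eq}_a\rrbracket(\llbracket R\rrbracket)(\llbracket S\rrbracket)\neq\emptyset$, so \Cref{EqSpec} delivers $\llbracket R\rrbracket=\llbracket S\rrbracket$.

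The only place that might look fiddly is the opening syntactic move: one has to be sure that $P$ and $Q$ above are merely strings and need not themselves be terms, and that the resulting $PRQ$ and $PSQ$ genuinely sit in $\mathsf{T}$. But this is exactly what the definition of $\Gamma^\mathsf{c}$ permits, and the parse trees are transparent. Everything else is a routine unfolding of the equality specification together with the clause defining $\llbracket R\mathop{\triangleright}C\rrbracket$ at the empty assignment.
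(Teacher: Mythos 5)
Your proof is correct and follows essentially the same route as the paper's (much terser) argument: apply the contextual closure to embed $R\mathop{\triangleright}S$ into $\mathsf{eq}_aRR\mathop{\triangleright}\mathsf{eq}_aRS$, use $\llbracket R:a\rrbracket$ plus reflexivity to get $\emptyset\neq\llbracket\mathsf{eq}_aRR\rrbracket\subseteq\llbracket\mathsf{eq}_aRS\rrbracket$, and conclude via \Cref{EqSpec}. The extra details you supply (the explicit strings $P,Q$ witnessing the contextual closure, the well-formedness check giving $\llbracket S\rrbracket\in\llbracket a\rrbracket$) are all correct and merely make explicit what the paper leaves implicit.
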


\begin{proof}
If $\llbracket R:a\rrbracket$ then $\llbracket\mathsf{eq}_aRR\rrbracket^\mathsf{wf}$ so $\llbracket(R\mathop{\triangleright}S)^\mathsf{c}\rrbracket$ implies
\[\emptyset\neq\llbracket\mathsf{eq}_aRR\rrbracket\subseteq\llbracket\mathsf{eq}_aRS\rrbracket\]
and hence $\llbracket R\rrbracket=\llbracket S\rrbracket$.    
\end{proof}

The above specifications were for elements of/functions to our default set of propositions $\llbracket\mathsf{u}_0\rrbracket$ where we only care about their truth values, i.e.~whether the sets in $\llbracket\mathsf{u}_0\rrbracket$ are empty or not.  For specifications in higher universes, we care more about the precise sets and functions we are specifying.  This is where we can use (sub-)reduction statements.

To take a basic example, let us consider the usual binary Cartesian product.  First take $a,b\in\underline{\mathsf{C}}$ and $n\geq1$ and let $\llbracket\cdot\rrbracket$ be a canonical model of $(a:\mathsf{u}_n)$ and $(b:\mathsf{u}_n)$.  We can then change the interpretation at another constant $\mathsf{pr}_{a,b}\in\underline{\mathsf{C}}$ to ensure that
\[\llbracket\mathsf{pr}_{a,b}\rrbracket=\llbracket a\rrbracket\times\llbracket b\rrbracket\]
and hence $\llbracket\mathsf{pr}_{a,b}\rrbracket\in\llbracket\mathsf{u}_n\rrbracket$, by the canonicity of $\llbracket\cdot\rrbracket$.  To reflect the fact elements of $\llbracket a\rrbracket\times\llbracket b\rrbracket$ are made from those of $\llbracket a\rrbracket$ and $\llbracket b\rrbracket$, we can change the interpretation at another constant $\mathsf{mk}_{a,b}\in\underline{\mathsf{C}}$ to ensure that $\llbracket\mathsf{mk}_{a,b}\rrbracket\in(\llbracket \mathsf{pr}_{a,b}\rrbracket^{\llbracket b\rrbracket})^{\llbracket a\rrbracket}$ and, for all $r\in\llbracket a\rrbracket$ and $s\in\llbracket b\rrbracket$,
\[\llbracket\mathsf{mk}_{a,b}\rrbracket(r)(s)=\langle r,s\rangle.\]
This means $\llbracket\mathsf{mk}_{a,b}:a\rightarrow b\rightarrow\mathsf{pr}_{a,b}\rrbracket$, but of course we could change the interpretation at $\mathsf{mk}_{a,b}$ in many other ways and still satisfy this typing statement, as there are many other functions in $(\llbracket\mathsf{pr}_{a,b}\rrbracket^{\llbracket b\rrbracket})^{\llbracket a\rrbracket}$.

As a first step to completing the specification, we can use the key property of $\llbracket a\rrbracket\times\llbracket b\rrbracket$ that, for any function $\theta\in(\mathsf{Set}^{\llbracket b\rrbracket})^{\llbracket a\rrbracket}$, we have a (unique) function $\theta^\times\in\mathsf{Set}^{\llbracket a\rrbracket\times\llbracket b\rrbracket}$ such that $\theta^\times(r,s)=\theta(r)(s)$, for all $r\in\llbracket a\rrbracket$ and $s\in\llbracket b\rrbracket$.  We can thus change the interpretation at another constant $\mathsf{rec}_{a,b}\in\underline{\mathsf{C}}$ to ensure that $\llbracket\mathsf{rec}_{a,b}\rrbracket$ is a function such that, for all $\phi\in\llbracket\mathsf{u}_n\rrbracket^{\llbracket\mathsf{pr}_{a,b}\rrbracket}$ and $\theta\in\prod\limits_{r\in\llbracket a\rrbracket}\prod\limits_{s\in\llbracket b\rrbracket}\phi(r,s)$,
\[\llbracket\mathsf{rec}_{a,b}\rrbracket(\phi)(\theta)=\theta^\times.\]
As long as we also make $\llbracket\mathsf{rec}_{a,b}\rrbracket$ undefined elsewhere, this means \[\llbracket\mathsf{rec}_{a,b}\rrbracket\in\prod_{\phi\in\llbracket\mathsf{u}_n\rrbracket^{\llbracket\mathsf{pr}_{a,b}\rrbracket}}\Big(\prod_{t\in\llbracket\mathsf{pr}_{a,b}\rrbracket}\phi(t)\Big)^{\prod\limits_{r\in\llbracket a\rrbracket}\prod\limits_{s\in\llbracket b\rrbracket}\phi(r,s)}.\]
In particular, $\llbracket\cdot\rrbracket$ also satisfies the typing statement
\[\mathsf{rec}_{a,b}:(f:\mathsf{pr}_{a,b}\rightarrow\mathsf{u}_n)\rightarrow((x:a)\rightarrow (y:b)\rightarrow f(\mathsf{mk}_{a,b}xy))\rightarrow(z:\mathsf{pr}_{a,b})\rightarrow fz.\]
But again there are plenty of other functions we could assign to $\mathsf{rec}_{a,b}$ that still satisfy this statement so our specification is still incomplete.

To really complete it note that, for any $\psi\in\mathsf{Set}^\mathsf{V}$ and $f,g,x,y\in\mathsf{V}$,
\begin{align*}
    \llbracket\mathsf{rec}_{a,b}fg(\mathsf{mk}_{a,b}xy)\rrbracket^\mathsf{wf}_\psi\quad\Rightarrow\quad\llbracket\mathsf{rec}_{a,b}fg(\mathsf{mk}_{a,b}xy)\rrbracket_\psi&=\llbracket g\rrbracket_\psi^\times(\llbracket x\rrbracket_\psi,\llbracket y\rrbracket_\psi)\\
    &=\llbracket g\rrbracket_\psi(\llbracket x\rrbracket_\psi)(\llbracket y\rrbracket_\psi)\\
    &=\llbracket gxy\rrbracket_\psi.
\end{align*}
In other words, $\llbracket\cdot\rrbracket$ satisfies the reduction statement
\[\mathsf{rec}_{a,b}fg(\mathsf{mk}_{a,b}xy)\btright gxy.\]

Now we can show that these statements completely specify the product in the sense that any canonical interpretation $\llbracket\cdot\rrbracket$ satisfying them gives us a natural bijection from $\llbracket\mathsf{pr}_{a,b}\rrbracket$ onto $\llbracket a\rrbracket\times\llbracket b\rrbracket$.

\begin{proposition}\label{CartesianProduct}
    Given $n\geq1$, $f,g,x,y,z\in\mathsf{V}$ as well as constants $a,b,\mathsf{pr}_{a,b},\mathsf{mk}_{a,b},\mathsf{rec}_{a,b}\in\underline{\mathsf{C}}$, if $\llbracket\cdot\rrbracket$ is a canonical model of
    \begin{align*}
        &(a,b,\mathsf{pr}_{a,b}:\mathsf{u}_n),(\mathsf{mk}_{a,b}:a\rightarrow b\rightarrow\mathsf{pr}_{a,b}),(\mathsf{rec}_{a,b}fg(\mathsf{mk}_{a,b}xy)\btright gxy)\quad\text{and}\\
        &(\mathsf{rec}_{a,b}:(f:\mathsf{pr}_{a,b}\rightarrow\mathsf{u}_n)\rightarrow((x:a)\rightarrow (y:b)\rightarrow f(\mathsf{mk}_{a,b}xy))\rightarrow(z:\mathsf{pr}_{a,b})\rightarrow fz)
    \end{align*}
    then $\langle r,s\rangle\mapsto\llbracket\mathsf{mk}_{a,b}\rrbracket(r)(s)$ is a bijection from $\llbracket a\rrbracket\times\llbracket b\rrbracket$ onto $\llbracket\mathsf{pr}_{a,b}\rrbracket$.
\end{proposition}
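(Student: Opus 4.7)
Write $A=\llbracket a\rrbracket$, $B=\llbracket b\rrbracket$, $P=\llbracket\mathsf{pr}_{a,b}\rrbracket$, $M=\llbracket\mathsf{mk}_{a,b}\rrbracket$ and $\mathrm{rec}=\llbracket\mathsf{rec}_{a,b}\rrbracket$. The plan is to prove the two halves of the bijection separately, using the typing of $\mathsf{rec}_{a,b}$ for surjectivity and the reduction statement for injectivity. The map $\mu\colon A\times B\to P$ given by $\mu(r,s)=M(r)(s)$ is already well-defined and lands in $P$ by unfolding the interpretation of $(\mathsf{mk}_{a,b}:a\to b\to\mathsf{pr}_{a,b})$. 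By canonicity and $n\geq 1$, the sets $A,B,P$ all lie in $\llbracket\mathsf{u}_n\rrbracket=\mathsf{H}(\kappa_n)$, so any subset or small product built from them also lies in $\llbracket\mathsf{u}_n\rrbracket$; this is what will let us legitimately feed such sets to $\mathrm{rec}$.

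For surjectivity, given $t\in P$, I would apply $\mathrm{rec}$ to the indicator predicate $\phi\colon P\to\llbracket\mathsf{u}_n\rrbracket$ defined by $\phi(t')=\{\langle r,s\rangle\in A\times B\mathrel{|}M(r)(s)=t'\}$, together with the witness map $\theta(r)(s)=\langle r,s\rangle$, which lies in $\phi(M(r)(s))$ by reflexivity. The typing of $\mathsf{rec}_{a,b}$ (interpreting the predicate variable $f$ as $\phi$, the generator $g$ as $\theta$ and the argument $z$ as $t$) then forces $\mathrm{rec}(\phi)(\theta)(t)\in\phi(t)$, so $\phi(t)$ is non-empty, supplying the required $\langle r,s\rangle$ with $\mu(r,s)=t$. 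Note that this half uses no reduction at all, just the typing of the recursor and the canonicity of $\llbracket\cdot\rrbracket$.

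For injectivity, I would construct explicit left inverses by feeding constant predicates and projection maps to $\mathrm{rec}$. Take $\phi_A\colon P\to\llbracket\mathsf{u}_n\rrbracket$ constantly equal to $A$ and $\theta_A(r)(s)=r$, so that $\theta_A\in\prod_{r\in A}\prod_{s\in B}\phi_A(M(r)(s))=(A^B)^A$. The typing of $\mathsf{rec}_{a,b}$ then gives $p:=\mathrm{rec}(\phi_A)(\theta_A)\in A^P$. To compute $p(M(r)(s))$, invoke the reduction statement $\mathsf{rec}_{a,b}fg(\mathsf{mk}_{a,b}xy)\btright gxy$ under the assignment $\psi$ that sends $f\mapsto\phi_A$, $g\mapsto\theta_A$, $x\mapsto r$, $y\mapsto s$. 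Well-formedness of the redex under $\psi$ follows from the typing checks already made (each application in the term lands in the domain of its function), so the reduction statement delivers the honest equation $p(M(r)(s))=\llbracket\mathsf{rec}_{a,b}fg(\mathsf{mk}_{a,b}xy)\rrbracket_\psi=\llbracket gxy\rrbracket_\psi=\theta_A(r)(s)=r$. A symmetric choice $\phi_B\equiv B$ with $\theta_B(r)(s)=s$ produces $q\in B^P$ satisfying $q(M(r)(s))=s$. Injectivity of $\mu$ is then immediate, as $\langle p,q\rangle$ is a left inverse.

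The main obstacle is clerical rather than conceptual: one must carefully track which sets belong to $\llbracket\mathsf{u}_n\rrbracket$ so that the typing of $\mathsf{rec}_{a,b}$ genuinely applies to our chosen $\phi$ and $\theta$, and verify that the assignment $\psi$ makes the term $\mathsf{rec}_{a,b}fg(\mathsf{mk}_{a,b}xy)$ well-formed in the sense of $\llbracket\cdot\rrbracket^\mathsf{wf}$, so that the reduction statement actually delivers an equation between interpretations rather than a vacuous implication. No new technique is needed beyond unfolding the definitions already set up in the excerpt.
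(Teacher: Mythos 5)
Your proof is correct and follows essentially the same route as the paper: a left inverse $\langle p,q\rangle$ built by feeding constant predicates and projections to the recursor and evaluated via the reduction statement, plus surjectivity from the recursor's typing alone. The only (cosmetic) difference is that you prove surjectivity directly with the fiber-valued predicate $\phi(t')=\{\langle r,s\rangle\mid M(r)(s)=t'\}$, where the paper argues by contradiction with the $\{\emptyset\}/\emptyset$ indicator of $\mathrm{ran}(M)$ — the same underlying idea.
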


\begin{proof}
    It suffices to show the map $\langle r,s\rangle\mapsto\mathsf{mk}_{a,b}(r)(s)$ is invertible.  Specifically, we claim the inverse map is given by $p\times q$ where
    \begin{align*}
        p&=\llbracket\mathsf{rec}_{a,b}\rrbracket(\llbracket\lambda w\mathsf{pr}_{a,b}a\rrbracket)(\llbracket\lambda xa\lambda ybx\rrbracket)\quad\text{and}\\
        q&=\llbracket\mathsf{rec}_{a,b}\rrbracket(\llbracket\lambda w\mathsf{pr}_{a,b}b\rrbracket)(\llbracket\lambda xa\lambda yby\rrbracket).
    \end{align*}
    To see this, take any $r\in\llbracket a\rrbracket$ and $s\in\llbracket b\rrbracket$.  We can then change the interpretation $\llbracket\cdot\rrbracket$ so that $\llbracket x\rrbracket=r$ and $\llbracket y\rrbracket=s$.  Further changing it so that $\llbracket f\rrbracket=\llbracket\lambda w\mathsf{pr}_{a,b}a\rrbracket$ and $\llbracket g\rrbracket=\llbracket\lambda ya\lambda zby\rrbracket$ (so $\llbracket f\rrbracket(t)=\llbracket a\rrbracket$, for all $t\in\llbracket\mathsf{pr}_{a,b}\rrbracket$, and $\llbracket g\rrbracket(i)(j)=i$, for all $i\in\llbracket a\rrbracket$ and $j\in\llbracket b\rrbracket$), we see that
    \begin{align*}
        p(\mathsf{mk}_{a,b}(r)(s))&=\llbracket\mathsf{rec}_{a,b}\rrbracket(\llbracket\lambda w\mathsf{pr}_{a,b}a\rrbracket)(\llbracket\lambda ya\lambda zby\rrbracket)(\mathsf{mk}_{a,b}(r)(s))\\
        &=\llbracket\mathsf{rec}_{a,b}fg(\mathsf{mk}_{a,b}xy)\rrbracket\\
        &=\llbracket gxy\rrbracket=\llbracket x\rrbracket=r.
    \end{align*}
    Likewise, instead changing $\llbracket\cdot\rrbracket$ so $\llbracket f\rrbracket=\llbracket\lambda w\mathsf{pr}_{a,b}b\rrbracket$ and $\llbracket g\rrbracket=\llbracket\lambda ya\lambda zbz\rrbracket$, the same argument yields $q(\mathsf{mk}_{a,b}(r)(s))=s$.  This shows that $p\times q$ is a left inverse to the map $\langle r,s\rangle\mapsto\mathsf{mk}_{a,b}(r)(s)$.

    To show that $p\times q$ is also a right inverse, it suffices to show that $\llbracket\mathsf{mk}_{a,b}\rrbracket$ is surjective.  If not, then we could take $t\in\llbracket\mathsf{pr}_{a,b}\rrbracket\setminus\mathrm{ran}(\llbracket\mathsf{mk}_{a,b}\rrbracket)$.  Then we could further define $\phi\in\llbracket\mathsf{u}_n\rrbracket^{\llbracket\mathsf{pr}_{a,b}\rrbracket}$ so
    \[\phi(r)=\begin{cases}\{\emptyset\}&\text{if }r\in\mathrm{ran}(\llbracket\mathsf{mk}_{a,b}\rrbracket)\\\ \,\emptyset&\text{otherwise}.\end{cases}\]
    Taking $\theta\in\prod_{r\in\llbracket a\rrbracket}\prod_{s\in\llbracket b\rrbracket}(\phi(\llbracket\mathsf{mk}_{a,b}\rrbracket(r)(s)))=\prod_{r\in\llbracket a\rrbracket}\prod_{s\in\llbracket b\rrbracket}\{\emptyset\}$, necessarily with $\theta(r)(s)=\emptyset$, for all $r\in\llbracket a\rrbracket$ and $s\in\llbracket b\rrbracket$, we see that
    \[\llbracket\mathsf{rec}_{a,b}\rrbracket(\phi)(\theta)(t)\in\phi(t)=\emptyset,\]
    a contradiction.  Thus $\llbracket\mathsf{mk}_{a,b}\rrbracket$ must indeed be surjective.
\end{proof}

Note the proof of the last part above would have worked just as well if we had replaced $\mathsf{u}_n$ in the typing statement for $\mathsf{rec}_{a,b}$ with $\mathsf{u}_0$ or any other $\mathsf{u}_l$.  Indeed, when it comes to adding specifications like this as axioms for the syntactic system (see \Cref{vdashDef'} below), it makes sense to add corresponding statements $(\mathsf{rec}_{a,b}^l:(f:\mathsf{pr}_{a,b}\rightarrow\mathsf{u}_l)\rightarrow\ldots)$, for all $l\in\omega$.  In fact, even if we just wanted to extend the result above to models of $(a:\mathsf{u}_m)$ and $(b:\mathsf{u}_n)$ for distinct $m,n\geq1$, then we would need both $\mathsf{rec}_{a,b}^m$ and $\mathsf{rec}_{a,b}^n$ in the first part of the proof.

Again for the syntactic system, it is often better to add sub-reduction statements as axioms rather than reduction statements.  However, as long as the interpretation also satisfies the specification for equality on the appropriate set, the contextual closure of the corresponding sub-reduction statement still suffices to complete the specification, as indicated by \Cref{SubReductionEq}.  We can also replace the individual variables in these sub-reduction statements with arbitrary terms.

We can also make everything polymorphic by replacing $a$ and $b$ with $v$ and $w$ while adding $(v:\mathsf{u}_m)\rightarrow(w:\mathsf{u}_n)\rightarrow$ at the start of all the typing statements.  So the full polymorphic specification for binary Cartesian products would then consist of the following statements, for all $l,m,n\in\omega$ with $m,n\geq1$ and $F,G,V,W,X,Y\in\mathsf{T}$ --
\begin{align*}
    (\mathsf{pr}_{m,n}:\ &(v:\mathsf{u}_m)\rightarrow(w:\mathsf{u}_n)\rightarrow\mathsf{u}_{m\vee n}),\\
    (\mathsf{mk}_{m,n}:\ &(v:\mathsf{u}_m)\rightarrow(w:\mathsf{u}_n)\rightarrow v\rightarrow w\rightarrow\mathsf{pr}_{m,n}vw),\\
    (\mathsf{rec}_{m,n}^l:\ &(v:\mathsf{u}_m)\rightarrow(w:\mathsf{u}_n)\rightarrow(f:\mathsf{pr}_{m,n}vw\rightarrow\mathsf{u}_l)\\
    &\rightarrow((x:v)\rightarrow (y:w)\rightarrow f(\mathsf{mk}_{m,n}vwxy))\\
    &\rightarrow(z:\mathsf{pr}_{m,n}vw)\rightarrow fz)\quad\text{and}\\
        (\mathsf{rec}_{m,n}^lV&WFG(\mathsf{mk}_{m,n}VWXY)\triangleright GXY)^\mathsf{c}.
\end{align*}

The logical `and' operation on propositions can be specified in essentially the same way, just without the need for any sub-reduction statements.  Indeed, the proof of the following is the same as the last part of the proof of \Cref{CartesianProduct} above.

\begin{proposition}
    Given $f,g,v,w,x,y,z\in\mathsf{V}$ as well as constants $\mathsf{and},\mathsf{mk},\mathsf{rec}\in\underline{\mathsf{C}}$, if $\llbracket\cdot\rrbracket$ is a canonical model of
\begin{align*}
    (\mathsf{and}:\ &(v:\mathsf{u}_0)\rightarrow(w:\mathsf{u}_0)\rightarrow\mathsf{u}_0),\\
    (\mathsf{mk}:\ &(v:\mathsf{u}_0)\rightarrow(w:\mathsf{u}_0)\rightarrow v\rightarrow w\rightarrow\mathsf{and}vw)\quad\text{and}\\
    (\mathsf{rec}:\ &(v:\mathsf{u}_0)\rightarrow(w:\mathsf{u}_0)\rightarrow(f:\mathsf{and}vw\rightarrow\mathsf{u}_0)\\
    &\rightarrow((x:v)\rightarrow (y:w)\rightarrow f(\mathsf{mk}vwxy))\\
    &\rightarrow(z:\mathsf{and}vw)\rightarrow fz)
\end{align*}
then, for all $p,q\in\llbracket\mathsf{u}_0\rrbracket$, $\langle r,s\rangle\mapsto\mathsf{mk}(r)(s)$ maps $p\times q$ onto $\llbracket\mathsf{and}\rrbracket(p)(q)$.  In particular,
\[\llbracket\mathsf{and}\rrbracket(p)(q)\neq\emptyset\qquad\Leftrightarrow\qquad p\neq\emptyset\neq q.\]
\end{proposition}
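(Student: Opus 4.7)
The plan is to follow the structure of the last part of the proof of Proposition \ref{CartesianProduct}, since the specification of $\mathsf{and}$ differs from that of the Cartesian product only in universe levels and in the absence of a reduction rule (which is not needed to pin down the set $\llbracket\mathsf{and}\rrbracket(p)(q)$ up to bijection). First I would check that the map $\langle r,s\rangle\mapsto\llbracket\mathsf{mk}\rrbracket(p)(q)(r)(s)$ is well-defined and lands in $\llbracket\mathsf{and}\rrbracket(p)(q)$. This is immediate from the typing $(\mathsf{mk}:(v:\mathsf{u}_0)\rightarrow(w:\mathsf{u}_0)\rightarrow v\rightarrow w\rightarrow\mathsf{and}vw)$: unfolding the dependent product at $v:=p$ and $w:=q$ shows that $\llbracket\mathsf{mk}\rrbracket(p)(q)$ is a function from $p$ to $(\llbracket\mathsf{and}\rrbracket(p)(q))^q$, so $\llbracket\mathsf{mk}\rrbracket(p)(q)(r)(s)\in\llbracket\mathsf{and}\rrbracket(p)(q)$ whenever $r\in p$ and $s\in q$.

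The heart of the proof is surjectivity, which I would establish by contradiction exactly as in the final paragraph of the proof of Proposition \ref{CartesianProduct}. Suppose $t\in\llbracket\mathsf{and}\rrbracket(p)(q)$ were not in the image of the map. Define $\phi\in\llbracket\mathsf{u}_0\rrbracket^{\llbracket\mathsf{and}\rrbracket(p)(q)}$ by $\phi(z)=\{\emptyset\}$ when $z$ lies in the image and $\phi(z)=\emptyset$ otherwise; this is a legitimate element because canonicity guarantees $\{\emptyset,\{\emptyset\}\}\subseteq\llbracket\mathsf{u}_0\rrbracket$. Then the constant-$\emptyset$ map $\theta$ belongs to $\prod_{r\in p}\prod_{s\in q}\phi(\llbracket\mathsf{mk}\rrbracket(p)(q)(r)(s))=\prod_{r\in p}\prod_{s\in q}\{\emptyset\}$. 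Unpacking the typing statement for $\mathsf{rec}$ at the semantic arguments $p,q,\phi,\theta$ gives $\llbracket\mathsf{rec}\rrbracket(p)(q)(\phi)(\theta)\in\prod_{z\in\llbracket\mathsf{and}\rrbracket(p)(q)}\phi(z)$; evaluating this choice function at $t$ forces an element of $\phi(t)=\emptyset$, a contradiction.

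The final ``in particular'' clause is then immediate: if $p\neq\emptyset\neq q$ then $p\times q$ is non-empty and so is its surjective image $\llbracket\mathsf{and}\rrbracket(p)(q)$, while if either factor is empty then so is $p\times q$, forcing $\llbracket\mathsf{and}\rrbracket(p)(q)$ to be empty as well. I do not anticipate a real obstacle, since the argument is essentially a direct transcription of the surjectivity step in Proposition \ref{CartesianProduct}. The only point requiring care is the semantic unfolding of the nested dependent arrows in the type of $\mathsf{rec}$, in order to confirm that $\llbracket\mathsf{rec}\rrbracket(p)(q)(\phi)$ really is a choice function on $\llbracket\mathsf{and}\rrbracket(p)(q)$ relative to $\phi$ whenever a witness $\theta$ for the antecedent exists, including in the degenerate case $p=\emptyset$ or $q=\emptyset$ where $\theta$ is vacuously the empty function.
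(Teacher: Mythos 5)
Your proof is correct and follows exactly the route the paper takes: the paper simply states that the argument is the same as the last part of the proof of \Cref{CartesianProduct}, namely the surjectivity-by-contradiction step using $\phi$ with values in $\{\emptyset,\{\emptyset\}\}\subseteq\llbracket\mathsf{u}_0\rrbracket$ and a witness $\theta$ for the antecedent of $\mathsf{rec}$. Your additional remarks on the well-definedness of the map and the degenerate case $p=\emptyset$ or $q=\emptyset$ are accurate but routine.
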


Likewise, the complete specification of the universal quantifier for propositions over some set $\llbracket a\rrbracket$ can be seen from the following.

\begin{proposition}
    Given $n\in\omega$, $f,g,q,z\in\mathsf{V}$ and $a,\forall_a,\mathsf{mk}_a,\mathsf{rec}_a\in\underline{\mathsf{C}}$, if $\llbracket\cdot\rrbracket$ is a canonical model for $(a:\mathsf{u}_n)$, $(\forall_a:(q:a\rightarrow\mathsf{u}_0)\rightarrow\mathsf{u}_0)$,
\begin{align*}
    (\mathsf{mk}_a:\ &(q:a\rightarrow\mathsf{u}_0)\rightarrow\mathsf{p}aq\rightarrow\forall_aq)\quad\text{and}\\
    (\mathsf{rec}_a:\ &(q:a\rightarrow\mathsf{u}_0)\rightarrow(f:\forall_a q\rightarrow\mathsf{u}_0)\\
    &\rightarrow((g:\mathsf{p}aq)\rightarrow f(\mathsf{mk}_aqg))\rightarrow(z:\forall_aq)\rightarrow fz)\quad\text{then},
    \end{align*}
for all $\theta\in\llbracket\mathsf{u}_0\rrbracket^{\llbracket a\rrbracket}$, $\phi\mapsto\mathsf{mk}_a(\theta)(\phi)$ maps $\prod\theta$ onto $\llbracket\forall_a\rrbracket(\theta)$ and hence
\[\llbracket\forall_a\rrbracket(\theta)\neq\emptyset\qquad\Leftrightarrow\qquad \prod\theta\neq\emptyset.\]
\end{proposition}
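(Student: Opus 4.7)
The plan is to follow closely the template of the last part of the proof of \Cref{CartesianProduct}, namely to observe that the typing of $\mathsf{mk}_a$ forces $\llbracket\mathsf{mk}_a\rrbracket(\theta)$ to be a function from $\prod\theta$ into $\llbracket\forall_a\rrbracket(\theta)$, and then to use $\mathsf{rec}_a$ to force this function to be surjective.

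First I would unpack the typing statement $(\mathsf{mk}_a:(q:a\rightarrow\mathsf{u}_0)\rightarrow\mathsf{p}aq\rightarrow\forall_aq)$. Using that $\llbracket\mathsf{p}aq\rrbracket_{\langle\theta,q\rangle}=\llbracket\mathsf{p}\rrbracket(\llbracket a\rrbracket)(\theta)=\prod\theta$ and $\llbracket\forall_aq\rrbracket_{\langle\theta,q\rangle}=\llbracket\forall_a\rrbracket(\theta)$, this says exactly that $\llbracket\mathsf{mk}_a\rrbracket(\theta)$ is a function from $\prod\theta$ to $\llbracket\forall_a\rrbracket(\theta)$, for every $\theta\in\llbracket a\rightarrow\mathsf{u}_0\rrbracket=\llbracket\mathsf{u}_0\rrbracket^{\llbracket a\rrbracket}$. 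In particular, when $\prod\theta\neq\emptyset$ the image is non-empty, giving the $\Leftarrow$ direction of the final biconditional; so it will suffice to prove the surjectivity claim (from which $\Rightarrow$ follows).

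For surjectivity, I would argue by contradiction. Fix $\theta\in\llbracket\mathsf{u}_0\rrbracket^{\llbracket a\rrbracket}$ and suppose there exists $t\in\llbracket\forall_a\rrbracket(\theta)\setminus\mathrm{ran}(\llbracket\mathsf{mk}_a\rrbracket(\theta))$. Modify the interpretation at $q,f$ so that $\llbracket q\rrbracket=\theta$ and $\llbracket f\rrbracket=\phi$, where $\phi\in\llbracket\mathsf{u}_0\rrbracket^{\llbracket\forall_a\rrbracket(\theta)}$ is defined by $\phi(r)=\{\emptyset\}$ if $r\in\mathrm{ran}(\llbracket\mathsf{mk}_a\rrbracket(\theta))$ and $\phi(r)=\emptyset$ otherwise. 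Then the statement $((g:\mathsf{p}aq)\rightarrow f(\mathsf{mk}_aqg))$ interprets to $\prod_{h\in\prod\theta}\phi(\llbracket\mathsf{mk}_a\rrbracket(\theta)(h))=\prod_{h\in\prod\theta}\{\emptyset\}$, which contains the constant-$\emptyset$ function $g^*$. Now apply the typing of $\mathsf{rec}_a$: we obtain $\llbracket\mathsf{rec}_a\rrbracket(\theta)(\phi)(g^*)(t)\in\phi(t)=\emptyset$, the desired contradiction.

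The main obstacle is just the bookkeeping to confirm that the various interpretations of the composite $\pi$-terms unfold as claimed: specifically, that $\llbracket\mathsf{p}aq\rrbracket_{\langle\theta,q\rangle}=\prod\theta$ and that the codomain of $\llbracket\mathsf{rec}_a\rrbracket(\theta)(\phi)(g^*)$ is indeed the pointwise application $\phi$ (this last fact uses that $\llbracket fz\rrbracket_{\langle t,z\rangle}=\phi(t)$). These are routine given the definition of interpretations and \Cref{RarrowSwf,RarrowStype}. Note no sub-reduction axiom is required, since we only need to produce an element lying in $\phi(t)$ to reach a contradiction, not to identify $\mathsf{rec}_a$ on elements of the form $\mathsf{mk}_aqg$ — this parallels the surjectivity half of \Cref{CartesianProduct}, which likewise made no use of the $\btright$ axiom.
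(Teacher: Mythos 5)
Your proposal is correct and follows essentially the same route as the paper, which itself only remarks that the proof is ``just like the last part of the proof of \Cref{CartesianProduct}'' --- i.e.\ the typing of $\mathsf{mk}_a$ gives a map $\prod\theta\to\llbracket\forall_a\rrbracket(\theta)$, and surjectivity follows by the same $\{\emptyset\}$-versus-$\emptyset$ indicator argument applied to $\mathsf{rec}_a$. Your observation that no reduction axiom is needed here matches the paper's specification, which indeed includes no $\btright$ statement for $\forall_a$.
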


Again the proof is just like the last part of the proof of \Cref{CartesianProduct} and the specification can be made polymorphic as above.

\section{Consequences}

We now define the semantic consequence relation $\vDash$ and examine its basic properties just like in \S\ref{Consequences}.

\begin{definition}
Define the \emph{consequence relation} ${\vDash}\subseteq\mathcal{P}(\mathsf{S})\times\mathsf{S}$ by
\[\Gamma\vDash X\qquad\Leftrightarrow\qquad\llbracket\Gamma\rrbracket\Rightarrow\llbracket X\rrbracket,\text{ for every interpretation }\llbracket\cdot\rrbracket.\]    
\end{definition}

So $\Gamma\vDash X$ means every model of $\Gamma$ is a model of $X$.

As before, we extend any ${\Vdash}\subseteq\mathcal{P}(\mathsf{S})\times\mathsf{S}$ to a binary relation on $\mathcal{P}(\mathsf{S})$ so that, for all $\Gamma,\Delta\subseteq\mathsf{S}$ and $S,P\in\mathsf{T}$,
\[\Gamma\Vdash\Delta\quad\Leftrightarrow\quad\Gamma\Vdash(S:P),\text{ for all }(S:P)\in\Delta.\]
We call ${\Vdash}$ a \emph{sequent} if this extension defines a preorder on $\mathcal{P}(\mathsf{S})$, which is again equivalent to saying that, for all $\Gamma,\Delta\subseteq\mathsf{S}$ and $X\in\mathsf{S}$,
\[\tag{Sequent}X\in\Gamma\text{ or }\Gamma\Vdash\Delta\Vdash X\qquad\Rightarrow\qquad\Gamma\Vdash X.\]

We immediately see that $\vDash$ is a sequent and hence also monotone and reflexive, exactly as in \Cref{SequentMonoRefl} and \Cref{ConMonoRefl}.  We also have the following property of $\vDash$ analogous to \Cref{AppProp}.

\begin{proposition}\label{AppConsequence2}
    For any $m,n\in\omega$ and $F,G,R,S\in\mathsf{T}$,
    \[(S:R),(F:\mathsf{p}_m^nRG)\vDash(FS:GS).\]
\end{proposition}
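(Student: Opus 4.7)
The plan is to take an arbitrary model $\llbracket\cdot\rrbracket$ of the two hypothesised statements and unpack the semantic information they provide, then apply the defining formula for $\llbracket\mathsf{p}_m^n\rrbracket$ to rewrite the predicate $\mathsf{p}_m^nRG$ in a more usable form. The hypothesis $\llbracket S:R\rrbracket$ delivers $\llbracket S\rrbracket^\mathsf{wf}$, $\llbracket R\rrbracket^\mathsf{wf}$ and $\llbracket S\rrbracket\in\llbracket R\rrbracket$, while $\llbracket F:\mathsf{p}_m^nRG\rrbracket$ delivers $\llbracket F\rrbracket^\mathsf{wf}$, $\llbracket\mathsf{p}_m^nRG\rrbracket^\mathsf{wf}$ and $\llbracket F\rrbracket\in\llbracket\mathsf{p}_m^nRG\rrbracket$.

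The key step is to unfold $\mathsf{p}_m^nRG$ as the double $\beta$-application $\beta\beta\mathsf{p}_m^nRG$ and apply the defining equation $\llbracket\mathsf{p}_m^n\rrbracket=\{\langle\{\langle\prod\phi,\phi\rangle:\phi\in\llbracket\mathsf{u}_n\rrbracket^D\},D\rangle:D\in\llbracket\mathsf{u}_m\rrbracket\}$ together with the well-formedness clauses for applications. This forces $\llbracket R\rrbracket\in\mathrm{dom}\llbracket\mathsf{p}_m^n\rrbracket=\llbracket\mathsf{u}_m\rrbracket$ and then $\llbracket G\rrbracket\in\mathrm{dom}\llbracket\mathsf{p}_m^nR\rrbracket=\llbracket\mathsf{u}_n\rrbracket^{\llbracket R\rrbracket}$, so that $\llbracket G\rrbracket$ is a function with domain $\llbracket R\rrbracket$ and values in $\llbracket\mathsf{u}_n\rrbracket$; moreover $\llbracket\mathsf{p}_m^nRG\rrbracket=\llbracket\mathsf{p}_m^n\rrbracket(\llbracket R\rrbracket)(\llbracket G\rrbracket)=\prod\llbracket G\rrbracket$.

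From here the conclusion follows mechanically. Since $\llbracket F\rrbracket\in\prod\llbracket G\rrbracket$, $\llbracket F\rrbracket$ is a function on $\llbracket R\rrbracket$ with $\llbracket F\rrbracket(r)\in\llbracket G\rrbracket(r)$ for every $r\in\llbracket R\rrbracket$. Evaluating at $r=\llbracket S\rrbracket\in\llbracket R\rrbracket$ gives $\llbracket FS\rrbracket=\llbracket F\rrbracket(\llbracket S\rrbracket)\in\llbracket G\rrbracket(\llbracket S\rrbracket)=\llbracket GS\rrbracket$. It only remains to verify $\llbracket FS\rrbracket^\mathsf{wf}$ and $\llbracket GS\rrbracket^\mathsf{wf}$, but both are immediate from the well-formedness clause for $\beta$-terms, since $\llbracket F\rrbracket,\llbracket G\rrbracket\in\mathsf{Fun}$ both have domain $\llbracket R\rrbracket$ and $\llbracket S\rrbracket^\mathsf{wf}$ with $\llbracket S\rrbracket\in\llbracket R\rrbracket$.

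There is no real obstacle; the entire argument is a matter of correctly unpacking the nested well-formedness conditions for $\mathsf{p}_m^nRG$ and invoking the explicit definition of $\llbracket\mathsf{p}_m^n\rrbracket$. The only mildly delicate point is being careful that the well-formedness of $\mathsf{p}_m^nRG$ supplies exactly the data needed to identify its interpretation with $\prod\llbracket G\rrbracket$, but once that identification is in hand the rest mirrors \Cref{AppProp} from the first system.
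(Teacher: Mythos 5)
Your proposal is correct and follows essentially the same route as the paper's proof: take an arbitrary model, use the well-formedness of $\mathsf{p}_m^nRG$ to conclude $\llbracket R\rrbracket\in\llbracket\mathsf{u}_m\rrbracket$ and $\llbracket G\rrbracket\in\llbracket\mathsf{u}_n\rrbracket^{\llbracket R\rrbracket}$ so that $\llbracket\mathsf{p}_m^nRG\rrbracket=\prod\llbracket G\rrbracket$, and then evaluate at $\llbracket S\rrbracket\in\llbracket R\rrbracket=\mathrm{dom}\llbracket F\rrbracket=\mathrm{dom}\llbracket G\rrbracket$ to obtain both the membership $\llbracket FS\rrbracket\in\llbracket GS\rrbracket$ and the well-formedness of $FS$ and $GS$. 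You are merely somewhat more explicit than the paper in spelling out the nested well-formedness clauses; nothing is missing.
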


\begin{proof}
    Assume $\llbracket\cdot\rrbracket$ is a model for $(S:R)$ and $(F:\mathsf{p}_m^nRG)$ so $\llbracket S\rrbracket\in\llbracket R\rrbracket$ and $\llbracket F\rrbracket\in\prod\llbracket G\rrbracket$.  But also $\llbracket\mathsf{p}_m^nRG\rrbracket^\mathsf{wf}$ so $\llbracket R\rrbracket\in\llbracket\mathsf{u}_m\rrbracket$ and $\llbracket G\rrbracket\in\llbracket\mathsf{u}_n\rrbracket^{\llbracket R\rrbracket}$.  In particular, $\mathrm{dom}\llbracket F\rrbracket=\mathrm{dom}\llbracket G\rrbracket=\llbracket R\rrbracket\ni\llbracket S\rrbracket$ so $\llbracket FS\rrbracket^\mathsf{wf}$, $\llbracket GS\rrbracket^\mathsf{wf}$ and 
    \[\llbracket FS\rrbracket=\llbracket F\rrbracket(\llbracket S\rrbracket)\in\llbracket G\rrbracket(\llbracket S\rrbracket)=\llbracket GS\rrbracket.\]
    This shows that $\llbracket\cdot\rrbracket$ is a model of $(FS:GS)$, as required.
\end{proof}

The free variables in any set of statements $\Gamma\subseteq\mathsf{S}$ are given by
\[\mathsf{F}(\Gamma)=\bigcup_{(R:S)\in\Gamma}\mathsf{F}(RS).\]
So $\mathsf{F}(\Gamma)$ consists of the free variables of the subjects and predicates of typing statements in $\Gamma$.  Reduction and sub-reduction statements in $\Gamma$ have no bearing on $\mathsf{F}(\Gamma)$, owing to the fact models of (sub-)reduction statements are invariant under any change to the values of variables.

We now have the following analog of \Cref{AbProp}.

\begin{proposition}\label{AbProp2}
    If $x\notin\mathsf{F}(\Gamma)\cup\mathsf{F}(Q)$ and $\Gamma\vDash(Q:\mathsf{u}_m)$ then
    \[\Gamma,(x:Q)\vDash(S:P),(P:\mathsf{u}_n)\qquad\Rightarrow\qquad\Gamma\vDash(\lambda xQS:\pi_m^nxQP).\]
\end{proposition}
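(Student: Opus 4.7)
The plan is to imitate the proof of Proposition~\ref{AbProp}, just with extra bookkeeping for the well-formedness condition and with the product written via $\mathsf{p}_m^n$ rather than $\rho$.  So take any model $\llbracket\cdot\rrbracket$ for $\Gamma$.  From $\Gamma\vDash(Q:\mathsf{u}_m)$ we already get $\llbracket Q:\mathsf{u}_m\rrbracket$, i.e.~$\llbracket Q\rrbracket^\mathsf{wf}$ and $\llbracket Q\rrbracket\in\llbracket\mathsf{u}_m\rrbracket$.  For each $q\in\llbracket Q\rrbracket$, consider the modified interpretation $\llbracket\cdot\rrbracket_{\langle q,x\rangle}$.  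Since $x\notin\mathsf{F}(Q)$, $\llbracket Q\rrbracket_{\langle q,x\rangle}=\llbracket Q\rrbracket\ni q$ so $\llbracket x:Q\rrbracket_{\langle q,x\rangle}$ holds, and since $x\notin\mathsf{F}(\Gamma)$, $\llbracket\Gamma\rrbracket_{\langle q,x\rangle}$ still holds (invoking \Cref{FreeProp} for typing statements in $\Gamma$, and noting that models of (sub-)reduction statements are unaffected by any change at $x$).

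By the hypothesis, this gives $\llbracket S:P\rrbracket_{\langle q,x\rangle}$ and $\llbracket P:\mathsf{u}_n\rrbracket_{\langle q,x\rangle}$ for every $q\in\llbracket Q\rrbracket$.  In particular, for each such $q$, $\llbracket S\rrbracket_{\langle q,x\rangle}^\mathsf{wf}$, $\llbracket P\rrbracket_{\langle q,x\rangle}^\mathsf{wf}$, $\llbracket S\rrbracket_{\langle q,x\rangle}\in\llbracket P\rrbracket_{\langle q,x\rangle}$ and $\llbracket P\rrbracket_{\langle q,x\rangle}\in\llbracket\mathsf{u}_n\rrbracket$.  Well-formedness of the abstraction and product terms is now at hand: $\llbracket\lambda xQS\rrbracket^\mathsf{wf}$ follows from $\llbracket Q\rrbracket^\mathsf{wf}$ together with $\llbracket S\rrbracket_{\langle q,x\rangle}^\mathsf{wf}$ for all $q\in\llbracket Q\rrbracket$, while $\llbracket\pi_m^nxQP\rrbracket^\mathsf{wf}$ follows from \Cref{RarrowSwf} using $\llbracket Q:\mathsf{u}_m\rrbracket$ and $\llbracket P:\mathsf{u}_n\rrbracket_{\langle q,x\rangle}$ for all $q\in\llbracket Q\rrbracket$.

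Finally, unfolding the interpretations yields
\[\llbracket\lambda xQS\rrbracket=\{\langle\llbracket S\rrbracket_{\langle q,x\rangle},q\rangle:q\in\llbracket Q\rrbracket\}\quad\text{and}\quad\llbracket\pi_m^nxQP\rrbracket=\prod_{q\in\llbracket Q\rrbracket}\llbracket P\rrbracket_{\langle q,x\rangle},\]
and then $\llbracket S\rrbracket_{\langle q,x\rangle}\in\llbracket P\rrbracket_{\langle q,x\rangle}$ for every $q\in\llbracket Q\rrbracket$ is exactly what is needed to witness $\llbracket\lambda xQS\rrbracket\in\llbracket\pi_m^nxQP\rrbracket$.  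Thus $\llbracket\lambda xQS:\pi_m^nxQP\rrbracket$, as required.

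The only subtle points are threading the well-formedness conditions through everywhere (the reason we need the extra hypothesis $\Gamma\vDash(Q:\mathsf{u}_m)$ and the extra conclusion $(P:\mathsf{u}_n)$ in the assumption, which had no counterpart in \Cref{AbProp}) and justifying that $\llbracket\Gamma\rrbracket_{\langle q,x\rangle}$ persists; neither presents a real obstacle once one is careful to keep the $\mathsf{u}_m$ and $\mathsf{u}_n$ pieces separated so \Cref{RarrowSwf} can be cited cleanly.
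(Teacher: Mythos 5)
Your proof is correct and follows essentially the same route as the paper's: take a model of $\Gamma$, pass to the modified interpretations $\llbracket\cdot\rrbracket_{\langle q,x\rangle}$ for $q\in\llbracket Q\rrbracket$ using $x\notin\mathsf{F}(\Gamma)\cup\mathsf{F}(Q)$, extract well-formedness and membership from the hypothesis, and assemble $\llbracket\lambda xQS\rrbracket\in\prod_{q\in\llbracket Q\rrbracket}\llbracket P\rrbracket_{\langle q,x\rangle}=\llbracket\pi_m^nxQP\rrbracket$. The only cosmetic difference is that you cite \Cref{RarrowSwf} for the well-formedness of the product term where the paper unfolds the same verification inline via $\llbracket\lambda xQP\rrbracket\in\llbracket\mathsf{u}_n\rrbracket^{\llbracket Q\rrbracket}$ and $\llbracket Q\rrbracket\in\llbracket\mathsf{u}_m\rrbracket$.
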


\begin{proof}
    Assume $\Gamma,(x:Q)\vDash(S:P),(S:\mathsf{u}_n)$ and take a model $\llbracket\cdot\rrbracket$ for $\Gamma$.  If $q\in\llbracket Q\rrbracket=\llbracket Q\rrbracket_{\langle q,x\rangle}$, as $x\notin\mathsf{F}(Q)$, then $\llbracket x:Q\rrbracket_{\langle q,x\rangle}$ and also $\llbracket\Gamma\rrbracket_{\langle q,x\rangle}$, as $x\notin\mathsf{F}(\Gamma)$ (and the fact that models of (sub-)reduction statements are invariant under changing the values of variables).  Thus $\llbracket\cdot\rrbracket_{\langle q,x\rangle}$ is a model for $\Gamma$ and $(x:Q)$ and hence for $(S:P)$ and $(P:\mathsf{u}_n)$ as well.  In particular, $\llbracket S\rrbracket_{\langle q,x\rangle}^\mathsf{wf}$, for all $q\in\llbracket Q\rrbracket$, and hence $\llbracket\lambda xQS\rrbracket^\mathsf{wf}$.  Likewise $\llbracket\lambda xQP\rrbracket^\mathsf{wf}$ and $\llbracket\lambda xQP\rrbracket\in\llbracket\mathsf{u}_n\rrbracket^{\llbracket Q\rrbracket}$.  As $\llbracket\Gamma\rrbracket$ and hence $\llbracket Q\rrbracket\in\llbracket\mathsf{u}_m\rrbracket$ too, it follows that $\llbracket\mathsf{p}_m^nQ\lambda xQP\rrbracket^\mathsf{wf}$.  By definition, $\pi_m^nxQP=\mathsf{p}_m^nQ\lambda xQP$ and, as in the proof of \Cref{AbProp},
    \[\hspace{-10pt}\llbracket\lambda xQS\rrbracket=\{\langle\llbracket S\rrbracket_{\langle q,x\rangle},q\rangle:q\in\llbracket Q\rrbracket\}\in\prod_{q\in\llbracket Q\rrbracket}\llbracket P\rrbracket_{\langle q,x\rangle}=\prod\llbracket \lambda xQP\rrbracket=\llbracket\pi_m^nxQP\rrbracket.\]
    This shows $\llbracket\lambda xQS:\pi_m^nxQP\rrbracket$ and hence $\Gamma\vDash(\lambda xQS:\pi_m^nxQP)$.
\end{proof}

We also observe that sub-reduction statements allow us to replace the predicates of typing statements as follows.

\begin{proposition}\label{PredicateReduction}
    For all $P,R,S\in\mathsf{T}$,
    \[(S:R),(R\triangleright P)\vDash(S:P).\]
\end{proposition}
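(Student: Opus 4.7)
The plan is to unpack the definitions of the three statements and chain them together. Suppose $\llbracket\cdot\rrbracket$ is a model for both $(S:R)$ and $(R\triangleright P)$. By the definition of $\llbracket S:R\rrbracket$, we have $\llbracket S\rrbracket^{\mathsf{wf}}$, $\llbracket R\rrbracket^{\mathsf{wf}}$ and $\llbracket S\rrbracket\in\llbracket R\rrbracket$. The key point is that $\llbracket R\triangleright P\rrbracket$ is defined as a universally quantified condition over all $\psi\in\mathsf{Set}^{\mathsf{V}}$, so in particular we may take $\psi=\emptyset$ (i.e.\ apply it to the base interpretation $\llbracket\cdot\rrbracket$ itself).

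Once we invoke $\llbracket R\triangleright P\rrbracket$ with the trivial $\psi$, the hypothesis $\llbracket R\rrbracket^{\mathsf{wf}}$ (which we already have) triggers the conclusion $\llbracket P\rrbracket^{\mathsf{wf}}$ and $\llbracket R\rrbracket\subseteq\llbracket P\rrbracket$. Combining $\llbracket S\rrbracket\in\llbracket R\rrbracket\subseteq\llbracket P\rrbracket$ with the already noted $\llbracket S\rrbracket^{\mathsf{wf}}$ and the newly derived $\llbracket P\rrbracket^{\mathsf{wf}}$ then yields exactly $\llbracket S:P\rrbracket$, as required.

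There is really no obstacle here: the statement is essentially a direct translation of the informal content of sub-reduction (if $R$ makes sense, then $P$ makes sense and denotes a superset of what $R$ denotes). The only subtlety worth flagging is the well-formedness clause $\llbracket P\rrbracket^{\mathsf{wf}}$ in the conclusion, which is not automatic from $\llbracket S:R\rrbracket$ alone but comes for free from the definition of $\triangleright$. I would keep the proof to two or three sentences, just tracing the chain $\llbracket S\rrbracket\in\llbracket R\rrbracket\subseteq\llbracket P\rrbracket$ and noting the well-formedness upgrade.
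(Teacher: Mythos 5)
Your proof is correct and is essentially the paper's own argument: unpack $\llbracket S:R\rrbracket$ to get $\llbracket S\rrbracket^{\mathsf{wf}}$, $\llbracket R\rrbracket^{\mathsf{wf}}$ and $\llbracket S\rrbracket\in\llbracket R\rrbracket$, then use $\llbracket R\mathop{\triangleright}P\rrbracket$ to upgrade to $\llbracket P\rrbracket^{\mathsf{wf}}$ and $\llbracket R\rrbracket\subseteq\llbracket P\rrbracket$. The only cosmetic quibble is that $\emptyset\notin\mathsf{Set}^{\mathsf{V}}$ (elements there have domain $\mathsf{V}$), so to instantiate the quantifier at the base interpretation you should take $\psi$ to agree with $\llbracket\cdot\rrbracket$ on the variables rather than $\psi=\emptyset$.
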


\begin{proof}
    Just note that $\llbracket S:R\rrbracket$ implies $\llbracket S\rrbracket\in\llbracket R\rrbracket$ and $\llbracket R\rrbracket^\mathsf{wf}$ so $\llbracket R\triangleright P\rrbracket$ implies $\llbracket P\rrbracket^\mathsf{wf}$ and $\llbracket R\rrbracket\subseteq\llbracket P\rrbracket$ and hence $\llbracket S\rrbracket\in\llbracket P\rrbracket$, i.e.~$\llbracket S:P\rrbracket$.
\end{proof}

The same applies subjects under reduction.

\begin{proposition}\label{SubjectReduction}
    For all $P,R,S\in\mathsf{T}$,
    \[(R:P),(R\btright S)\vDash(S:P).\]
\end{proposition}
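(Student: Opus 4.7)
The plan is to unfold the definitions and note that the claim is essentially immediate once we pair the hypothesis $\llbracket R\btright S\rrbracket$ with the trivial assignment $\psi = \emptyset$. Suppose $\llbracket\cdot\rrbracket$ is any model of $(R:P)$ and $(R\btright S)$. From $\llbracket R:P\rrbracket$ I extract three pieces of data: $\llbracket R\rrbracket^\mathsf{wf}$, $\llbracket P\rrbracket^\mathsf{wf}$, and $\llbracket R\rrbracket\in\llbracket P\rrbracket$.

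Next, I invoke $\llbracket R\btright S\rrbracket$, which by definition says $\llbracket R\rrbracket_\psi^\mathsf{wf}\Rightarrow(\llbracket S\rrbracket_\psi^\mathsf{wf}\text{ and }\llbracket R\rrbracket_\psi=\llbracket S\rrbracket_\psi)$ for every $\psi\in\mathsf{Set}^\mathsf{V}$. Specializing to $\psi=\emptyset$ (so that $\llbracket\cdot\rrbracket_\psi=\llbracket\cdot\rrbracket$) and feeding in the already-established $\llbracket R\rrbracket^\mathsf{wf}$, I obtain $\llbracket S\rrbracket^\mathsf{wf}$ together with the equality $\llbracket R\rrbracket=\llbracket S\rrbracket$. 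Combined with $\llbracket P\rrbracket^\mathsf{wf}$ and $\llbracket R\rrbracket\in\llbracket P\rrbracket$ from the first step, this rewrites to $\llbracket S\rrbracket\in\llbracket P\rrbracket$, which is precisely the definition of $\llbracket S:P\rrbracket$. Since $\llbracket\cdot\rrbracket$ was an arbitrary model of the two hypotheses, this shows $(R:P),(R\btright S)\vDash(S:P)$.

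There is no real obstacle here; the one subtlety worth flagging is just that the definition of $\btright_{\llbracket\cdot\rrbracket}$ quantifies over all $\psi$, which is strictly stronger than what we need (we only use the $\psi=\emptyset$ instance). This asymmetry with \Cref{PredicateReduction} is expected: when changing the predicate we need $\llbracket P\rrbracket^\mathsf{wf}$ supplied by the sub-reduction clause itself, whereas for subjects we already have $\llbracket P\rrbracket^\mathsf{wf}$ from the typing hypothesis, so the argument is slightly simpler.
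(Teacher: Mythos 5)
Your proof is correct and follows essentially the same route as the paper's: extract $\llbracket R\rrbracket^\mathsf{wf}$, $\llbracket P\rrbracket^\mathsf{wf}$ and $\llbracket R\rrbracket\in\llbracket P\rrbracket$ from the typing hypothesis, then use the reduction hypothesis at the unmodified interpretation to get $\llbracket S\rrbracket^\mathsf{wf}$ and $\llbracket R\rrbracket=\llbracket S\rrbracket$, hence $\llbracket S:P\rrbracket$. The closing remark about an asymmetry with \Cref{PredicateReduction} is a harmless aside (the two arguments are in fact structurally symmetric: in each case the well-formedness of the changed term comes from the (sub-)reduction clause and that of the unchanged term from the typing hypothesis), but it does not affect the proof.
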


\begin{proof}
    As above, $\llbracket R:P\rrbracket$ implies $\llbracket R\rrbracket\in\llbracket P\rrbracket$ and $\llbracket R\rrbracket^\mathsf{wf}$ so $\llbracket R\btright S\rrbracket$ implies $\llbracket S\rrbracket^\mathsf{wf}$ and $\llbracket R\rrbracket=\llbracket S\rrbracket$ and hence $\llbracket S\rrbracket\in\llbracket P\rrbracket$, i.e.~$\llbracket S:P\rrbracket$.
\end{proof}

We now define our syntactic \emph{inference relation} ${\vdash}$ as the smallest sequent satisfying the rules arising from \Cref{AppConsequence2,AbProp2,PredicateReduction,SubjectReduction}.

\begin{definition}\label{vdashDef'}
    Let ${\vdash}\subseteq\mathcal{P}(\mathsf{S})\times\mathsf{S}$ be the smallest sequent such that, for all $F,G,P,Q,R,S\in\mathsf{T}$ and $m,n\in\omega$,
\begin{align}
    \label{Sub''}\tag{Red$_{\btright}$}(R:P),(R\mathbin{\btright}S)&\vdash(S:P),\\
    \label{Red''}\tag{Red$_\triangleright$}(S:R),(R\mathop{\triangleright}P)&\vdash(S:P),\\
    \label{App''}\tag{App}(S:R),(F:\mathsf{p}_m^nRG)&\vdash(FS:GS)\quad\text{and}\\
    \label{Ab''}\tag{Ab}\Gamma,(x:Q)\vdash(S:P),(P:\mathsf{u}_n)\quad\Rightarrow\quad\Gamma&\vdash(\lambda xQS:\pi_m^nxQP),
\end{align}
whenever $\Gamma\vdash(Q:\mathsf{u}_m)$ and $x\in\mathsf{V}\setminus(\mathsf{F}(\Gamma)\cup\mathsf{F}(Q))$.
\end{definition}

As the rules defining $\vdash$ we were already derived from the properties of $\vDash$ proved previously, the following result is immediate.

\begin{proposition}
    The inference relation is sound, i.e.~${\vdash}\subseteq{\vDash}$.
\end{proposition}

\section{Future Work}

So far we have set out the basic syntax and semantics of our simplified type system with polymorphic product operators.  But of course this is just the beginning and there would be much further work to do if this system were to be used as a foundation for mathematics.

Firstly, we should investigate basic properties of the inference relation $\vdash$ as in \S\ref{Inferences}.  The extra \eqref{Sub''} and \eqref{Red''} rules here complicate matters and accordingly we may want to restrict the reduction and sub-reduction statements that can be added as axioms.  In other words, results concerning instances $\Gamma\vdash(S:P)$ of the inference relation may only apply when $\Gamma_{\btright}:=\Gamma\cap\mathsf{S}_{\btright}$ and $\Gamma_\triangleright:=\Gamma\cap\mathsf{S}_\triangleright$ are of a specified form.

The most restrictive approach would be to only allow $\alpha$-conversion for the reduction statements while allowing both $\alpha$-conversion and $\beta$-reduction as sub-reduction statements but nothing else, much like in pure type systems.  Reduction statements in specifications like those of \Cref{CartesianProduct} would then have to be replaced by typing statements involving propositional equality.  A less restrictive approach would be to instead replace the reduction statements in such specifications by sub-reduction statements, as suggested in the comments after \Cref{CartesianProduct}.  This is essentially the approach taken in the dependent type theory underlying proof assistants like Lean -- see \cite{Carneiro2019,Carneiro2024}.  It is only when we are not at all concerned with type-checking being decidable that we would opt for an `extensional' system allowing arbitrary reduction statements, not just those for $\alpha$-conversion.

If we want unique typing as in \Cref{UniqueTyping} then we should also restrict the typing statements on the left side of $\vdash$ to \emph{contexts}, i.e. to $\Gamma\subseteq\mathsf{S}$ such that $\Gamma_:^{-1}$ is a function with $\mathrm{dom}(\Gamma_:^{-1})\subseteq\mathsf{T}_0$ (where $\Gamma_:=\mathsf{S}_:\cap\Gamma$).  If we also want to have any hope of proving some normalisation result (that the terms involved can only be reduced finitely many times by the \eqref{Sub''} and \eqref{Red''} rules, at least modulo $\alpha$-conversion) then we would have to further restrict to appropriate `legal' contexts.  Usually these are taken to be certain finite sequences of statements, but we could also define them for (even infinite) sets along the following lines.

\begin{definition}
    A context $\Gamma\subseteq\mathsf{S}$ is \emph{legal} if we have an enumeration $(x_n:P_n)_{n\in\omega}$ of $\Gamma_:$ such that, for all $j\in\omega$,
    \[x_j\notin\bigcup_{i\geq j}\mathsf{F}(P_i)\quad\text{and}\quad(\Gamma\setminus\bigcup_{i\leq j}(x_i:P_i))\vdash(P_j:\mathsf{u}_k),\text{ for some }k\in\omega.\]
\end{definition}

One more item on the to do list would be to verify the semantic completeness of various other specifications for important mathematical structures like in \Cref{Specifications}.  Even better would be to do this in a more systematic way.  Indeed, there are already systematic ways of defining inductive specifications from a given list of constructors -- see \cite[\S2.6]{Carneiro2019}.  These suffice to specify many mathematical structures and so a general result on their semantic completeness would cover most specifications of interest.

Finally, it would be nice to know if our inference relation satisfies any kind of completeness result to complement soundness, even for the bare bones type system in \Cref{Part1}.  In other words, are there any suitable conditions under which $\Gamma\vDash(S:P)$ implies $\Gamma\vdash(S:P)$ or even $\Gamma\vdash(T:P)$ for some other term $T$?  If not then what are the `missing' inference rules that would be required for this to hold?

\bibliography{References}{}
\bibliographystyle{alphaurl}

\end{document}